\definecolor{dullmagenta}{rgb}{0.4,0,0.4}   
\definecolor{darkblue}{rgb}{0,0,0.4}
\DeclareMathOperator{\SE}{SE}
\DeclareMathOperator{\Emb}{Emb}
\DeclareMathOperator{\Diff}{Diff}
\DeclareMathOperator{\vol}{vol}
\DeclareMathOperator{\discrete}{discrete}
\newcommand{\bulk}{\ensuremath{\aquarius}}
\newcommand{\lb}{\ensuremath{\langle}}
\newcommand{\rb}{\ensuremath{\rangle}}
\newcommand{\pder}[2]{\ensuremath{\frac{\partial #1}{\partial #2}}}
\newcommand{\body}{\ensuremath{\mathcal{B}}}
\newtheorem{thm}{Theorem}[section]
\newtheorem{lem}[thm]{Lemma}
\newtheorem{prop}[thm]{Proposition}
\newtheorem{cor}[thm]{Corollary}
\newtheorem{definition}[thm]{Definition}
\newtheorem{example}[thm]{Example}
\begin{document}
\title[The role of $\SE(d)$-reduction for swimming]{The role of $\SE(d)$-reduction for swimming in Stokes and Navier-Stokes fluids}
\author{Henry O. Jacobs}

\maketitle

\begin{abstract}
  Steady swimming appears both periodic and stable.
  These characteristics are the very definition of limit cycles, and so we ask ``Can we view swimming as a limit cycle?''
  In this paper we will not be able to answer this question in full.
  However, we shall find that reduction by $\SE(d)$-symmetry brings us closer.
  Upon performing reduction by symmetry, we will find a stable fixed point which corresponds to a motionless body in stagnant water.
  We will then speculate on the existence of periodic orbits which are ``approximately'' limit cycles in the reduced system.
  When we lift these periodic orbits from the reduced phase space, we obtain dynamically robust relatively periodic orbits wherein each period is related to the previous by an $\SE(d)$ phase.
  Clearly, an $\SE(d)$ phase consisting of nonzero translation and identity rotation means directional swimming, while non-trivial rotations correspond to turning with a constant turning radius.
\end{abstract}

\section{Introduction}
  Many engineers have a justifiable predilection for coordinate-based descriptions of the world.
  However, the use of coordinate-free descriptions is consistently leveraged in Jerry Marsden's work to gain insights which would otherwise have been clouded by the complexities which coordinates bring with them.
  For example, proving anything non-trivial about the inviscid fluid equations
  \begin{align}
  	\partial_t u^i + u^j \partial_j u_i + \partial_i p = 0 \qquad , \qquad \partial_i u^i = 0 \quad , \quad u \in \mathfrak{X}(\mathbb{R}^d) \label{eq:Euler}
  \end{align}
  is notoriously difficult.
  However, with the publication of \cite{Arnold1966} differential geometers were permitted to substitute \eqref{eq:Euler} with a right-trivialized geodesic equation on a Lie group (i.e. an Euler-Poincar\'e equation).
  In particular, if one was willing to use geometry, one could study inviscid fluids \emph{without the need to invoke \eqref{eq:Euler} directly!}
  Only three years later, the proof of local existence-uniqueness was realized by David Ebin and Jerry Marsden using these coordinate-free notions \cite{EbinMarsden1970}.
  
  In studying swimming in the mid-Reynolds regime, one is confronted with coupling a solid body to a Navier-Stokes fluid.
  It is just as true today as it was in 1966 that the Navier-Stokes equations are difficult to work with.
A very modest extension of \cite{Arnold1966} allows us to view fluid-structure problems as forced Lagrangian systems on principal bundles \cite{JaVa2013}.
  In this paper, we will use this geometric characterization of fluid-structure interaction to study swimming in viscous flows.
  We will use these geometric tools to explore the question: \emph{Can we reasonably interpret swimming as a limit cycle?}.
  Unfortunately, we will not be able answer this question in full.
  However, we will be able to clarify the crucial role which $\SE(d)$-symmetry will play in the final answer.
  \emph{A limit-cycle interpretation of swimming is valuable because it would conform with an existing body of knowledge derived from laboratory and computer experiments.
  Moreover, this simple characterization of swimming could be of interest to control engineers who desire to use passive mechanisms to achieve robust behavior with simple open-loop control algorithms.}

\paragraph{\bf Main Contributions}
  We will understand the system consisting of a body immersed in a fluid as a dissipative system evolving on a phase space $P$.  One observes that the system is invariant with respect to the group of isometries of $\mathbb{R}^d$, i.e. the special Euclidean group $\SE(d)$.  This observation suggests that one can describe the system evolving on the quotient manifold $[P] = \frac{P}{\SE(d)}$.  Given this reduction, the main contributions of this paper are:
\begin{itemize}
	\item Under reasonable assumptions on the Lagrangian and the viscous frictions, we will prove the existence of an asymptotically stable point for the dynamics in $[P]$.
	\item We will illustrate how relative limit cycles are produced by exponentially stable equilibria in finite-dimensional dynamical systems under sufficiently small time-periodic perturbations.
	\item For sufficiently small time-periodic internal body forces, we will speculate on the existence of loops in $[P]$ which approximately satisfy the dynamics on $[P]$.
	\item We illustrate how loops in $[P]$ are lifted to paths in $P$, where each period is related to the previous by a rigid rotation and translation.
\end{itemize}

\subsection{Background}
 There exists a substantial body of knowledge in the form of computational and biological experiments which are consistent with the hypothesis that swimming could be interpreted as a limit cycle.
 For example,  experiments involving tethered dead fish immersed in a flow behind a bluff body suggest an ability to passively harvest energy from the surrounding vorticity of the flow.
 The same studies also provide a relevant example of oscillatory behavior as a stable state for an unactuated system \cite{Beal2006}.  Moreover, in living fish, periodic motor neuron actuation has been recorded directly and periodic internal elastic forces have been approximated via linear elasticity models \cite{Shadwick1998}.  Finally, the notion of central pattern generators\footnote{Central patter generators (CPGs) are neural networks which produce time-periodic signals.}, has become widely accepted among biologists studying locomotion \cite{Delcomyn1980,GillnerWallen1985,OttoFriesen1994}. In particular, a central pattern generator for lamprey swimming has been identified and EMG readings have been recorded in-vitro to verify that the swimming mechanism does not rely solely on feedback \cite{WallenWilliams1984} (see Figure \ref{fig:emg}). These experiments and observations from biology suggest that passive mechanisms might play a significant role in understanding swimming.
 
 \begin{figure}[h]
 	\centering
	\includegraphics[width = 3in]{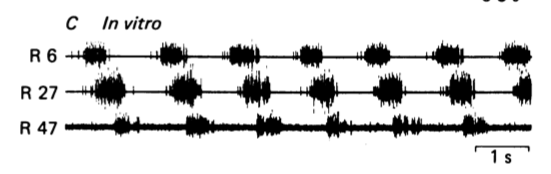}
	\caption{An in-vitro EMG recording of a lamprey spinal chord in ``fictive swimming'' \cite{WallenWilliams1984}.}
	\label{fig:emg}
 \end{figure}

   Additionally, numerical experiments involving rigid bodies with oscillating forces suggest that uniform motion (i.e. flapping flight) is an attracting state for certain pairs of frequencies and Reynolds numbers \cite{Alben_Shelley_2005,Zhang2010}.  Closer to what will be demonstrated here, numerical simulations of a 2-dimensional model of a lamprey at high (but not infinite) Reynolds numbers illustrate swimming as an emergent phenomenon arising asymptotically from time-periodic internal body forces.  Trajectories of this system converge to cyclic behavior after very few oscillations when starting from rest \cite{Tytell2010}.
   A similar study was carried out to understand the difference between periodic control forces and prescribed kinematics in \cite{Wilson2011}.  Here, regular periodic behavior was observed for both.  Moreover, the prescribed kinematic swimmers were unable to swim in the inviscid regime due to time-reversibility, while coherent locomotion was consistently observed for both the forced and prescribed kinematic swimmers at $Re = 70,140,350,560,700$.
   Finally, after the initial submission of this article, a series of numerical experiments to test this ``limit-cycle hypothesis'' were performed for n-linked swimmers. 
   Here, the authors viewed swimming as analogous to the emergence of limit cycles in a forced-damped harmonic oscillator in what they refer to as the ``forced-damped-oscillation framework.''
   The numerical experiments consisted of placing an n-linked chain with an elastic restoring force on the joint angles into a Navier-Stokes fluid using the immersed body method.
   The results consistently suggested that the dynamics admit a stable relative limit cycle \cite{Bhalla2013}. 
   Of course, vorticity shedding plays a fundamental role in the middle and high Reynolds swimming \cite{Vogel}.
   
   \begin{figure}[h]
   	\begin{minipage}{0.45\textwidth}
		\centering
		\includegraphics[width = \textwidth]{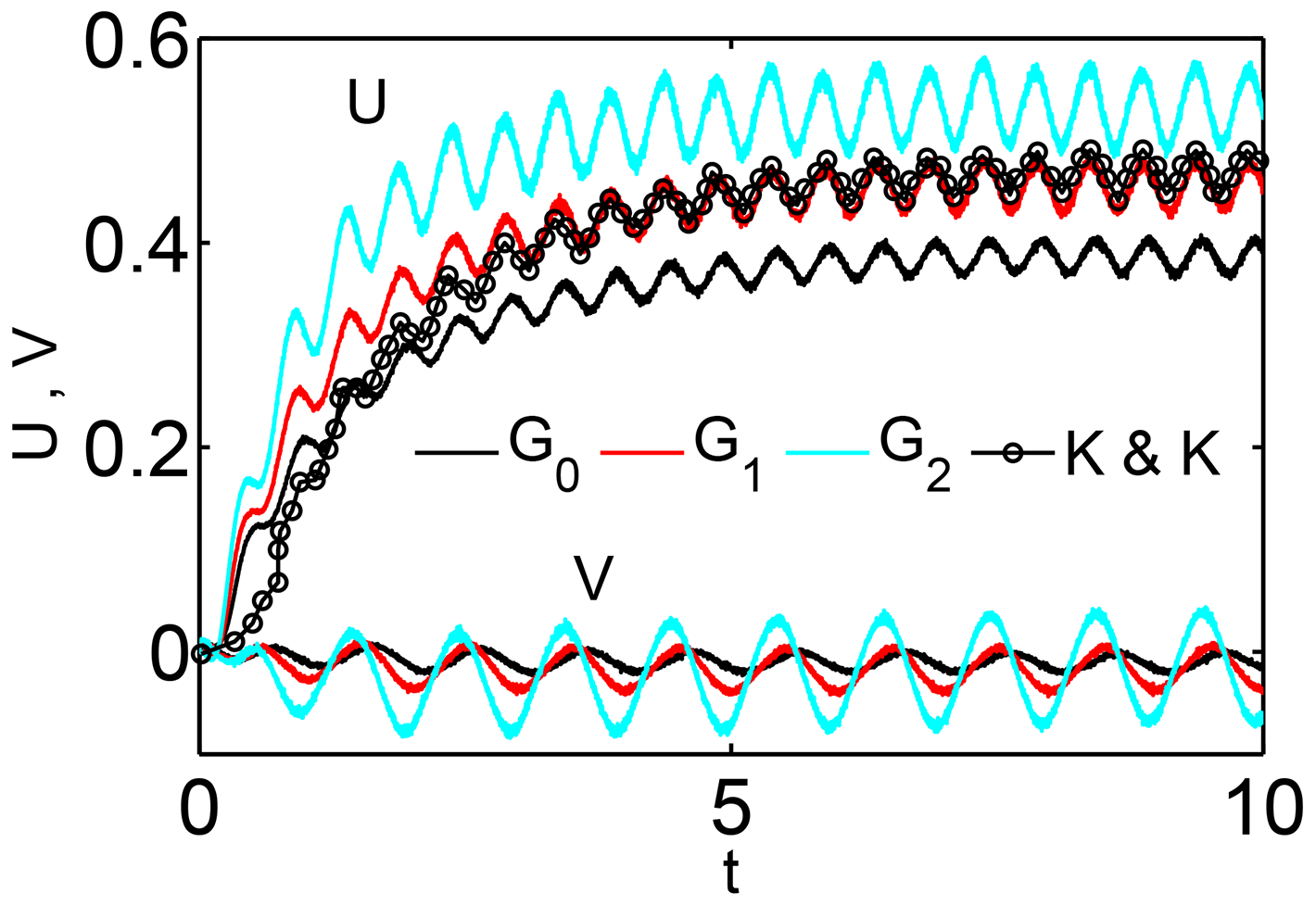}
		\caption{A plot taken from \cite{Bhalla2013} of the horizontal velocity, $U$, and vertical velocity, $V$, of n-linked swimmers with time-periodic internal body forces.}
	\end{minipage}
	\begin{minipage}{0.45\textwidth}
		\centering
		\includegraphics[width = \textwidth]{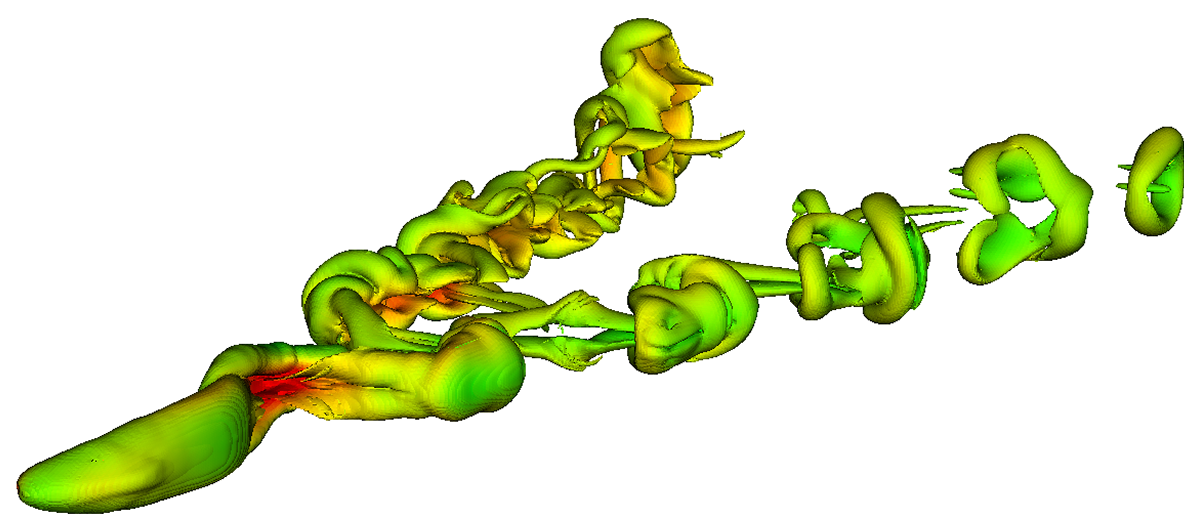}
		\caption{A vorticity isosurface of an n-linked swimmer courtesy of \cite{Bhalla2013}.}
	\end{minipage}
   \end{figure}
   
   In this paper we will approach the problem of fluid-structure interaction in Navier-Stokes fluids as an instance of Lagrangian reduction by symmetry \cite{CeMaRa2001}.
   Recent work based upon a modest generalization of \cite{Arnold1966} has accomplished this reduction by viewing the configuration space of fluid-structure interaction as a $\Diff_{\vol}( \bulk_{b_0})$-principal bundle, 
   where $\Diff_{\vol}( \bulk_{b_0})$ is the diffeomorphism group for the reference-domain of the fluid \cite{JaVa2013}. 
   In particular, the standard equations of motion for a passive body immersed in a Navier-Stokes fluid can be seen as dissipative Lagrange-Poincar\'e equations.
   Following Professor Marsden's tradition of giving credit to Jean le Rond d'Alembert for his formulation of the Lagrange-d'Alembert principle, it would be fair to label the equations of motion for a body immersed in a Navier-Stokes fluid as an instance of \emph{Lagrange-Poincar\'{e}-d'Alembert equations}.
   Just as \cite{Arnold1966} allowed geometers to replace the coordinate-based description of an Euler fluid with an Euler-Poincar\'e equation, \cite{JaVa2013} will serve as a sanity check for us, and allow us to replace the equations for a Navier-Stokes fluid coupled to an elastic solid with a Lagrange-Poincar\'e-d'Alembert equation.
   
   It is worth noting that the constructions to be presented in this paper are different from those typically employed in applications of differential geometry to fluid-structure interaction.
   In the low Reynolds regime, one frequently encounters geometric constructions initially articulated in \cite{ShapereWilczek1989}.
   Similarly, in the potential flow regime, a similar set of constructions was described in \cite{Lewis1986}.
   Both of these constructions lead to a number of insights in aquatic locomotion at extreme Reynolds numbers \cite{Ehlers2011,Kanso2005,Kelly1996,Kelly2000,Koiller1996,Munnier2011}.
  Principal connections are crucial for these constructions, but interpolating between these extreme Reynolds regimes has proven difficult.
  In particular, \emph{there will be absolutely no principal connections in this paper}.

\subsection{Conventions and Notation}
  All objects and morphisms will be assumed to be sufficiently smooth.
  Moreover, we will not address the existence or uniqueness of solutions for fluid-structure systems and all algebraic manipulations will be interpreted formally.
  If $M$ is a smooth manifold then we will denote the tangent bundle by $\tau_M :TM \to M$, and the tangent lift of a map $f:M \to N$ will be denoted $Tf : TM \to TN$.
  The set of vector fields on $M$ will be denoted $\mathfrak{X}(M)$ and the set of time-periodic vector fields on $M$ will be denoted $\mathfrak{X}(M)^{S^1}$.
  A deformation of a vector field $X \in \mathfrak{X}(M)$ is a continuous\footnote{We view $\mathfrak{X}(M)$ as a Fr\'echet vector space.} sequence of vector fields $X_{\varepsilon} \in \mathfrak{X}(M)$ parametrized by a real parameter $\varepsilon \in \mathbb{R}$ which takes values in a neighborhood of $0 \in \mathbb{R}$ and is such that $X_0 = X$.
  Given that $\mathfrak{X}(M)$ is contained in $\mathfrak{X}(M)^{S^1}$, we can consider time-periodic deformations of vector fields as well.
  The flow of a vector-field, $X$, (perhaps time-dependent) will be denoted by $\Phi_t^X$.
  Lastly, given any map $f:M \to N$, the map $f^{-1}:N \to \mathrm{Set}(M)$ is the set-valued map defined by $f^{-1}(n) = \{ m \in M \vert f(m) = n \}$.

\section{Limit cycles} \label{sec:LC}
Let $M$ be a finite-dimensional Riemannian manifold with norm $\| \cdot \| : TM \to \mathbb{R}$.
We can use the norm to define the notion of exponential stability.
Informally, an exponentially stable equilibrium is an equilibrium for which nearby trajectories are attracted to at an exponential rate.
Formally, we say an equilibrium is \emph{exponentially stable} if the spectrum of the linearized system lies \emph{strictly} in the left half of the complex plane.
However, the following (and equivalent) definition will be of greater use.

\begin{definition}
	Let $x^* \in M$ be an equilibrium of the vector field $X \in \mathfrak{X}(M)$.  Let $TX \in \mathfrak{X}(TM)$ be the tangent lift of $X$.  We call $x^*$ an \emph{exponentially stable equilibrium} if there exists a $\lambda < 0$ such that
	\[
		\frac{d}{dt} \| v(t) \| < \lambda \| v(0) \| \quad , \quad \forall t > 0
	\]
	where $v(t)$ is a solution curve of $TX$.
\end{definition}

If one prefers to view exponential stability in terms of flows, we can use the Riemannian distance metric $d: M \times M \to \mathbb{R}$.
Then, an exponentially stable equilibrium $x^* \in M$ of a vector field $X \in \mathfrak{X}(M)$ is an equilibrium where there exists a neighborhood $U \subset M$ containing $x^*$ such that for any integral curve $x(t)$ with $x(0) \in U$ the equation
\[
	d(x(t) , x^*) < e^{\lambda t} d(x(0) , x^*) \quad , \quad \forall t > 0
\]
holds for some $\lambda < 0$.

A special property of exponentially stable equilibria is what some control theorists call \emph{robustness} \cite{ZhouDoyle} and what some dynamical systems theorists call \emph{persistence} \cite{Fenichel1971,Hirsch77,Eldering2013}.
Let $X_\varepsilon \in \mathfrak{X}(M)$ be a deformation of the vector field $X \in \mathfrak{X}(M)$.  Given an exponentially stable point $x^* \in M$ of $X$, we can assert the existence of exponentially stable equilibria of $X_\varepsilon$ for sufficiently small $\varepsilon$.  This robustness of behavior can be vastly generalized by considering normally hyperbolic invariant manifolds.

\begin{definition}[Normally hyperbolic invariant manifold\footnote{This definition was taken from the introduction of \cite{Eldering2013} and is equivalent to the definition used in \cite{Hirsch77}.}]
	Let $N \subset M$ be a compact invariant submanifold of the vector field $X \in \mathfrak{X}(M)$, and let $\Phi_t^X$ be the flow of $X$.
	We call $N$ a \emph{normally hyperbolically invariant manifold} if there exists a $T\Phi_t$-invariant splitting $T_{N}M \equiv TN \oplus E^s \oplus E^u$ and rates $\rho^s < - \rho < 0 < \rho < \rho^u$ such that
	\begin{align}
		\| T\Phi^X_t(v) \| \leq C \cdot e^{ \rho |t|} \| v \| \quad , \quad \forall v \in TN , t \in \mathbb{R} \\
		\| T \Phi^X_t(v)\| \leq C_u \cdot e^{ \rho_u t } \| v \| \quad , \quad \forall v \in E^u ,  t \leq 0 \\
		\| T \Phi^X_t(v)\| \leq C_s \cdot e^{ \rho_s t } \| v \| \quad , \quad \forall v \in E^s , t \geq 0
	\end{align}
	for some constants $C,C_u,C_s > 0$.  If $E^u$ has trivial fibers, then we call $N$ an \emph{exponentially stable invariant manifold}.
\end{definition}
Now that we are equipped with the definition of a normally hyperbolic invariant manifold, we can state the persistence theorem (a.k.a. Fenichel's theorem).
\begin{thm}[see Theorem 1 of \cite{Fenichel1971} or section 4 of \cite{Hirsch77}] \label{thm:persistence}
	Let $X_{\varepsilon} \in \mathfrak{X}(M)$ be a deformation of $X \in \mathfrak{X}(M)$ and let $N \subset M$ be a compact normally hyperbolic invariant manifold of $X$.  Then for sufficiently small $\varepsilon > 0$ there exists a normally hyperbolic invariant manifold, $N_\varepsilon \subset M$ of $X_\varepsilon$ which is  diffeomorphic to $N$ and contained in a neighborhood of $N$.
\end{thm}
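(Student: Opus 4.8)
The plan is to reduce the statement to a fixed-point problem via the graph transform (Hadamard's method), exploiting the compactness of $N$ to obtain uniform constants. First I would set up adapted coordinates: since $N$ is compact and carries the $T\Phi_t^X$-invariant splitting $T_N M = TN \oplus E^s \oplus E^u$, a tubular neighborhood identifies a neighborhood of $N$ with a neighborhood of the zero section of the normal bundle $E := E^s \oplus E^u$, so that points near $N$ are written $(x,y)$ with $x \in N$ and $y \in E_x$. Using a cutoff supported near $N$, I would modify $X$ and $X_\varepsilon$ outside a fixed neighborhood of $N$ so that the vector fields are globally defined on $E$, agree with the originals near $N$, and have uniformly bounded derivatives; this alters nothing in a neighborhood of $N$ but lets me control all estimates globally.

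Next I would pass from the flow to the time-$T$ map $F_\varepsilon := \Phi_T^{X_\varepsilon}$ for a fixed $T > 0$. Any $F_\varepsilon$-invariant normally hyperbolic graph over $N$ is automatically invariant under the entire flow, since the flow carries such a manifold to another $F_\varepsilon$-invariant graph and the uniqueness coming from the contraction below forces the two to coincide. It therefore suffices to produce an invariant manifold for the single diffeomorphism $F_\varepsilon$.

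Then comes the heart of the argument. I would work on the complete metric space $\mathcal{S}$ of continuous sections $h : N \to E$ with $\|h\|_{C^0} \le \delta$ and Lipschitz constant at most $L$, metrized by the sup-distance, and define the graph transform $\Gamma_\varepsilon$ by declaring $\mathrm{graph}(\Gamma_\varepsilon h) = F_\varepsilon(\mathrm{graph}(h))$. The rate inequalities in the definition of normal hyperbolicity provide an invariant cone field, which guarantees both that $F_\varepsilon(\mathrm{graph}(h))$ is again the graph of a section (so $\Gamma_\varepsilon$ is well defined) and that $\Gamma_\varepsilon$ maps $\mathcal{S}$ into itself. The same gap between the tangential rate $\rho$ and the normal rates, namely $\rho < \rho_u$ and $\rho_s < -\rho$, makes $\Gamma_\varepsilon$ a contraction in the $C^0$ metric, with contraction factor controlled by quantities such as $e^{(\rho-\rho_u)T}$; the Banach fixed-point theorem then yields a unique fixed section $h_\varepsilon$, whose graph is the candidate invariant manifold $N_\varepsilon$, at this stage a Lipschitz manifold homeomorphic to $N$. (In the genuinely two-sided case one realizes $N_\varepsilon$ as the transverse intersection of persistent center-stable and center-unstable manifolds, each a graph obtained from a one-sided transform.) Since $F_0$ fixes the zero section, so that $N_0 = N$, and since $F_\varepsilon \to F_0$ together with their derivatives as $\varepsilon \to 0$, continuous dependence of the fixed point on the parameter places $N_\varepsilon$ inside any prescribed neighborhood of $N$ for $\varepsilon$ small.

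Finally I would verify that $N_\varepsilon$ is itself normally hyperbolic and recover its smoothness. The invariant subbundles and the rate constants perturb continuously, so for small $\varepsilon$ the splitting persists with slightly degraded rates still satisfying the required inequalities. The delicate point, and the step I expect to be the main obstacle, is regularity: the graph transform is naturally only a contraction in $C^0$, so promoting $h_\varepsilon$ from Lipschitz to $C^1$ (and, to obtain $N_\varepsilon$ diffeomorphic rather than merely homeomorphic to $N$, to $C^r$) does not come for free. Here I would apply the fiber contraction theorem to the lifted action of $\Gamma_\varepsilon$ on candidate tangent planes, using precisely the spectral-gap inequalities $\rho < \rho_u$ and $\rho_s < -\rho$ to make that lifted map contract on the fibers; this is exactly where normal hyperbolicity, rather than mere invariance, is indispensable, and it is the estimate most likely to require care in bookkeeping the constants $C, C_u, C_s$ uniformly over the compact base $N$.
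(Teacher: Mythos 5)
The paper itself contains no proof of this statement: it is quoted as a classical result, with the proof deferred to Theorem 1 of \cite{Fenichel1971} and Section 4 of \cite{Hirsch77}, so there is no internal argument to compare yours against. Your sketch is, in outline, precisely the Hadamard graph-transform proof given in those references (especially Hirsch--Pugh--Shub): normal-bundle coordinates via a tubular neighborhood, a cutoff to make the estimates global, reduction to the time-$T$ map, the graph transform as a $C^0$ contraction on a space of Lipschitz sections (with the center-stable/center-unstable intersection trick in the genuinely two-sided case), continuous dependence on $\varepsilon$ to keep $N_\varepsilon$ in a prescribed neighborhood of $N$, and the fiber contraction theorem to upgrade the fixed section from Lipschitz to $C^1$, which is what makes $N_\varepsilon$ diffeomorphic (not merely homeomorphic) to $N$. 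The points you single out as delicate are indeed where the real work lies in the literature, and one of them deserves a stronger flag than you give it: the persistence of the hyperbolic splitting over $N_\varepsilon$ with slightly degraded rates is not just ``the subbundles perturb continuously''---it is established by a separate invariant-section argument (a contraction on a bundle of cones or on the Grassmannian over $N_\varepsilon$), logically downstream of the existence of $N_\varepsilon$ itself. Likewise, the passage from invariance under the time-$T$ map back to invariance under the full flow needs the uniqueness statement to apply to $\Phi^{X_\varepsilon}_s(N_\varepsilon)$ for every $s$, which requires checking that these images remain admissible Lipschitz graphs; this is standard but must be said. As a blind reconstruction of the cited proof, your proposal is faithful and essentially correct at the level of a sketch.
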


We will not need Theorem \ref{thm:persistence} in its full generality because we will only be concerned with a special instance of normally hyperbolic invariant manifolds.  In particular, we will be concerned with exponentially stable limit cycles.

\begin{definition}[Exponentially stable limit cycle]
	An exponentially stable invariant manifold which is homeomorphic to $S^1$ is called an \emph{exponentially stable limit cycle}.
\end{definition}

  We can alternatively define an exponentially stable limit cycle using the distance metric $d:M \times M \to \mathbb{R}$.  Given a periodic trajectory $x^*(t)$, the orbit $\Gamma$ is an exponentially stable limit cycle if there exists a neighborhood $U$ of $\Gamma$ and a contraction rate $\lambda < 0$ such that
  \[
  	d( x(t) , \Gamma) \leq e^{\lambda t} d( x(0) , \Gamma) \quad \forall t > 0
  \]
  for all solution curves $x(t)$ with $x(0) \in U$.  In any case, a direct corollary of Theorem \ref{thm:persistence} is the persistence of exponentially stable limit cycles.  That is to say:

\begin{cor} \label{cor:ESLC}
  Let $\Gamma$ be an exponentially stable limit cycle of $X \in \mathfrak{X}(M)$ and let $X_\varepsilon \in \mathfrak{X}(M)$ be a deformation of $X$.  Then for sufficiently small $\varepsilon > 0$ there exists an exponentially stable limit cycle $\Gamma_\varepsilon$ of $X_\varepsilon$ which is in a neighborhood of $\Gamma$.
\end{cor}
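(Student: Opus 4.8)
The plan is to derive this as a specialization of the persistence theorem, Theorem \ref{thm:persistence}, by unwinding the chain of definitions and then checking that the \emph{type} of stability is preserved under the deformation.

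First I would observe that, by definition, an exponentially stable limit cycle $\Gamma$ is an exponentially stable invariant manifold homeomorphic to $S^1$, and an exponentially stable invariant manifold is precisely a normally hyperbolic invariant manifold whose unstable bundle $E^u$ has trivial fibers. Since $\Gamma \cong S^1$ is compact, $\Gamma$ is a compact normally hyperbolic invariant manifold of $X$, so the hypotheses of Theorem \ref{thm:persistence} are met. Applying that theorem, for sufficiently small $\varepsilon > 0$ I obtain a normally hyperbolic invariant manifold $N_\varepsilon$ of $X_\varepsilon$ which is diffeomorphic to $\Gamma$ and contained in a prescribed neighborhood of $\Gamma$. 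Because $N_\varepsilon$ is diffeomorphic to $\Gamma$ and $\Gamma$ is homeomorphic to $S^1$, the manifold $N_\varepsilon$ is itself homeomorphic to $S^1$. Thus the only remaining point is to show that $N_\varepsilon$ is exponentially stable, i.e. that its unstable bundle is again trivial.

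This last point is where the content lies, since the bare statement of Theorem \ref{thm:persistence} produces a normally hyperbolic invariant manifold without a priori committing to the dimensions of its stable and unstable bundles. To close the gap I would appeal to the finer conclusions of the Fenichel/Hirsch--Pugh--Shub theory underlying Theorem \ref{thm:persistence}: the invariant splitting $T_{N_\varepsilon} M = T N_\varepsilon \oplus E^s_\varepsilon \oplus E^u_\varepsilon$ and its associated rates depend continuously on $\varepsilon$, so the fiber dimensions of $E^s_\varepsilon$ and $E^u_\varepsilon$, being integer-valued, are locally constant in $\varepsilon$. Since $\dim E^u = 0$ at $\varepsilon = 0$, it follows that $\dim E^u_\varepsilon = 0$ for all sufficiently small $\varepsilon$, so $E^u_\varepsilon$ is trivial and $N_\varepsilon$ is an exponentially stable invariant manifold homeomorphic to $S^1$, hence an exponentially stable limit cycle. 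Alternatively, and perhaps more transparently, I would argue directly from the distance-metric characterization of exponential stability: the cycle $\Gamma$ supplies a compact forward-invariant neighborhood on which the flow of $X$ is uniformly contracting toward $\Gamma$, and by continuous dependence of the flow on the vector field this neighborhood remains forward-invariant and contracting for $X_\varepsilon$ when $\varepsilon$ is small, which both localizes $N_\varepsilon$ and forces its unstable bundle to vanish.

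The main obstacle I anticipate is exactly this verification that triviality of $E^u$ persists, rather than merely that \emph{some} normally hyperbolic invariant manifold persists; it requires either the continuity of the hyperbolic splitting in $\varepsilon$ or an independent attractivity argument, neither of which is literally contained in the one-line statement of Theorem \ref{thm:persistence} as quoted. Everything else — compactness of $\Gamma$, the diffeomorphism type of $N_\varepsilon$, and the passage from ``diffeomorphic to $\Gamma$'' to ``homeomorphic to $S^1$'' — is immediate from the definitions and the theorem.
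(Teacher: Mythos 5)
Your proposal is correct and follows the same route as the paper: the paper offers no proof at all, asserting the result as a ``direct corollary'' of Theorem \ref{thm:persistence}, which is precisely the reduction you carry out in your first paragraph. The one point you develop at length --- that persistence of normal hyperbolicity alone does not automatically preserve triviality of the unstable bundle $E^u_\varepsilon$, so exponential stability must be recovered either from continuity of the invariant splitting in $\varepsilon$ or from a direct uniform-contraction argument on a compact neighborhood --- is a genuine subtlety that the paper silently absorbs into the word ``direct,'' and your handling of it is sound.
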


Given a time-periodic vector field $Y \in \mathfrak{X}(M)^{S^1}$, we can consider the \emph{autonomous} vector field on the time-augmented phase space $M \times S^1$ given by $Y \times \partial_\theta \in \mathfrak{X}(M \times S^1)$.
In particular, the vector field $Y \times \partial_\theta$ corresponds to the autonomous dynamical system
  \begin{align*}
  	\dot{\theta} = 1 \qquad , \qquad \dot{x} = Y(x,\theta).
  \end{align*}
  If the vector field $Y \times \partial_\theta$ admits an exponentially stable limit cycle $(x(t) , \theta(t) ) \in M \times S^1$, then $\theta(t) := t \mod 2\pi$ and $x(t)$ is $2\pi$-periodic.
  This observation justifies the following definition.
  
  \begin{definition}
  	Let $Y \in \mathfrak{X}(M)^{S^1}$.  Given a periodic solution curve $x(t) \in M$, we call the orbit $\Gamma := x(S^1)$ a \emph{non-autonomous exponentially stable limit cycle} if $\Gamma \times S^1$ is an exponentially stable limit cycle for $Y \times \partial_\theta$.
  \end{definition}
  Given the definition of a non-autonomous exponentially stable limit cycle, we can specialize Corollary \ref{cor:ESLC} to the case of time-periodic dynamical systems. In particular, we arrive at:
  
  \begin{prop} \label{prop:NAESLC}
  	Let $x^* \in M$ be an exponentially stable equilibrium of $X \in \mathfrak{X}(M)$ and let $X_\varepsilon \in \mathfrak{X}(M)^{S^1}$ be a time-periodic deformation of $X$.  Then for sufficiently small $\varepsilon > 0$ the vector field $X_\varepsilon$ admits a non-autonomous exponentially stable limit cycle in a neighborhood of $x^*$.
  \end{prop}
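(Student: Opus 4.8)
The plan is to deduce Proposition~\ref{prop:NAESLC} from Corollary~\ref{cor:ESLC} by lifting the problem to the time-augmented phase space $M \times S^1$. First I would consider the autonomous vector field $X \times \partial_\theta \in \mathfrak{X}(M \times S^1)$ associated to the (trivially time-periodic) field $X$. Because $X$ is independent of $\theta$ and $x^*$ is an equilibrium of $X$, the circle $\Gamma_0 := \{x^*\} \times S^1$ is invariant, and the restricted dynamics is simply $\theta(t) = \theta(0) + t$. Thus $\Gamma_0$ is a periodic orbit homeomorphic to $S^1$, and it is the natural candidate for the unperturbed limit cycle.

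The key verification is that $\Gamma_0$ is an \emph{exponentially stable} limit cycle of $X \times \partial_\theta$. Along $\Gamma_0$ the tangent bundle splits as $T_{\Gamma_0}(M \times S^1) = T\Gamma_0 \oplus E^s$, where $T\Gamma_0$ is spanned by $\partial_\theta$ and $E^s$ is the vertical fibre $T_{x^*}M$ sitting over each point of $\Gamma_0$. This splitting is invariant under the linearized flow $T\Phi_t$, since the $\partial_\theta$-component decouples and the normal component is governed by $DX(x^*)$. Tangentially the flow neither grows nor decays, so the rate estimate on $T\Gamma_0$ holds for any small $\rho > 0$; normally, the exponential stability of $x^*$ for $X$ contracts $E^s = T_{x^*}M$ at a rate strictly below $-\rho$, while $E^u := \{0\}$. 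Hence $\Gamma_0$ is a compact exponentially stable invariant manifold homeomorphic to $S^1$, and I would stress that it is the compactness of $\Gamma_0$, not of $M$, that the hypotheses of Theorem~\ref{thm:persistence} require.

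Next I would observe that $\varepsilon \mapsto X_\varepsilon \times \partial_\theta$ is a deformation of $X \times \partial_\theta$ in $\mathfrak{X}(M \times S^1)$, since $X_0 = X$ and the $\partial_\theta$-component is left untouched. Applying Corollary~\ref{cor:ESLC} then yields, for all sufficiently small $\varepsilon > 0$, an exponentially stable limit cycle $\Gamma_\varepsilon$ of $X_\varepsilon \times \partial_\theta$ lying in a neighborhood of $\Gamma_0$. To translate this back to $M$, I would use that the $\theta$-equation $\dot\theta = 1$ is unchanged by the deformation: since $\Gamma_\varepsilon$ is diffeomorphic to $\Gamma_0$ and $C^0$-close to it, the projection $\Gamma_\varepsilon \to S^1$ has degree one, so $\Gamma_\varepsilon$ is the graph $\{(x_\varepsilon(\theta),\theta) : \theta \in S^1\}$ of a single $2\pi$-periodic curve $x_\varepsilon$. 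This $x_\varepsilon(t)$ is a periodic solution of the non-autonomous system $X_\varepsilon$ whose associated orbit is, by construction, an exponentially stable limit cycle of $X_\varepsilon \times \partial_\theta$, which is exactly a non-autonomous exponentially stable limit cycle lying in a neighborhood of $x^*$.

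The main obstacle I anticipate is the second step: checking that $\Gamma_0$ meets the normal-hyperbolicity conditions with the correct flow-invariant splitting and a tangential rate $\rho$ small enough to sit below the normal contraction rate furnished by the exponential stability of $x^*$. A secondary point needing care is confirming that the persisting cycle $\Gamma_\varepsilon$ winds exactly once around the $S^1$-factor rather than several times; this is what guarantees that $\Gamma_\varepsilon$ is the graph of one genuinely $2\pi$-periodic solution of $X_\varepsilon$, and it follows precisely because the $\theta$-dynamics are undeformed.
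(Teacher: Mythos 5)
Your proposal is correct and follows essentially the same route as the paper's proof: augment the phase space to $M \times S^1$, recognize $\{x^*\} \times S^1$ as an exponentially stable limit cycle of $X \times \partial_\theta$, apply Corollary \ref{cor:ESLC} to the deformation $X_\varepsilon \times \partial_\theta$, and read off the periodic solution $x_\varepsilon(\theta)$. The only difference is that you fill in two details the paper leaves implicit, namely the explicit verification of the invariant splitting with its rates and the degree-one argument guaranteeing that the persisted cycle is a graph over $S^1$; both are sound and strengthen the argument rather than diverge from it.
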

  
  \begin{proof}
    Because $x^*$ is an exponentially stable equilibrium of $X$, we can see that $(x^* , \theta)$ for $\theta \in S^1$ is a solution curve of $X \times \partial_\theta$ with orbit $\{x^*\} \times S^1$.
    In particular, $\{ x^* \} \times S^1$ is an exponentially stable limit cycle with a contraction rate $\rho_s$ equal to the contraction rate of $x^*$ in the dynamical system defined by $\dot{x} = X(x)$.
    By Corollary \ref{cor:ESLC}, the vector field $X_\varepsilon \times \partial_\theta \in \mathfrak{X}(M \times S^1)$ also exhibits a limit cycle, $(x_\varepsilon(\theta) , \theta)$, in a neighborhood of $\{x^*\} \times S^1$.
    This means that $x_\varepsilon(\theta)$ is a non-autonomous exponentially stable limit cycle for $X_\varepsilon$ in a neighborhood of $x^*$.
  \end{proof}

  The significance of Proposition \ref{prop:NAESLC} is that we can time-periodically deform systems with exponentially stable equilibria to produce non-autonomous exponentially stable limit cycles.

\begin{example}
Consider the equations of motion for a perturbed linear damped mass-spring system,
\begin{align}
	\frac{d}{dt} \begin{bmatrix} x \\ y \end{bmatrix} = \begin{bmatrix} y \\ -x -y \end{bmatrix} + \varepsilon \begin{bmatrix} 0 \\ \sin(t) \end{bmatrix}. \label{eq:example1}
\end{align}
We see that for $\varepsilon = 0$, the system admits an exponentially stable point $(x,\dot{x}) = (0,0)$.
When $\varepsilon > 0$, the non-autonomous limit cycle of Proposition \ref{prop:NAESLC} emerges.
Typical trajectories of the system for $\varepsilon = 0,1$ are shown in Figures \ref{fig:SP} and \ref{fig:LC}.
\end{example}

\begin{figure}[h]
\centering
\begin{minipage}[b]{0.45\textwidth}
\includegraphics[width = \textwidth]{./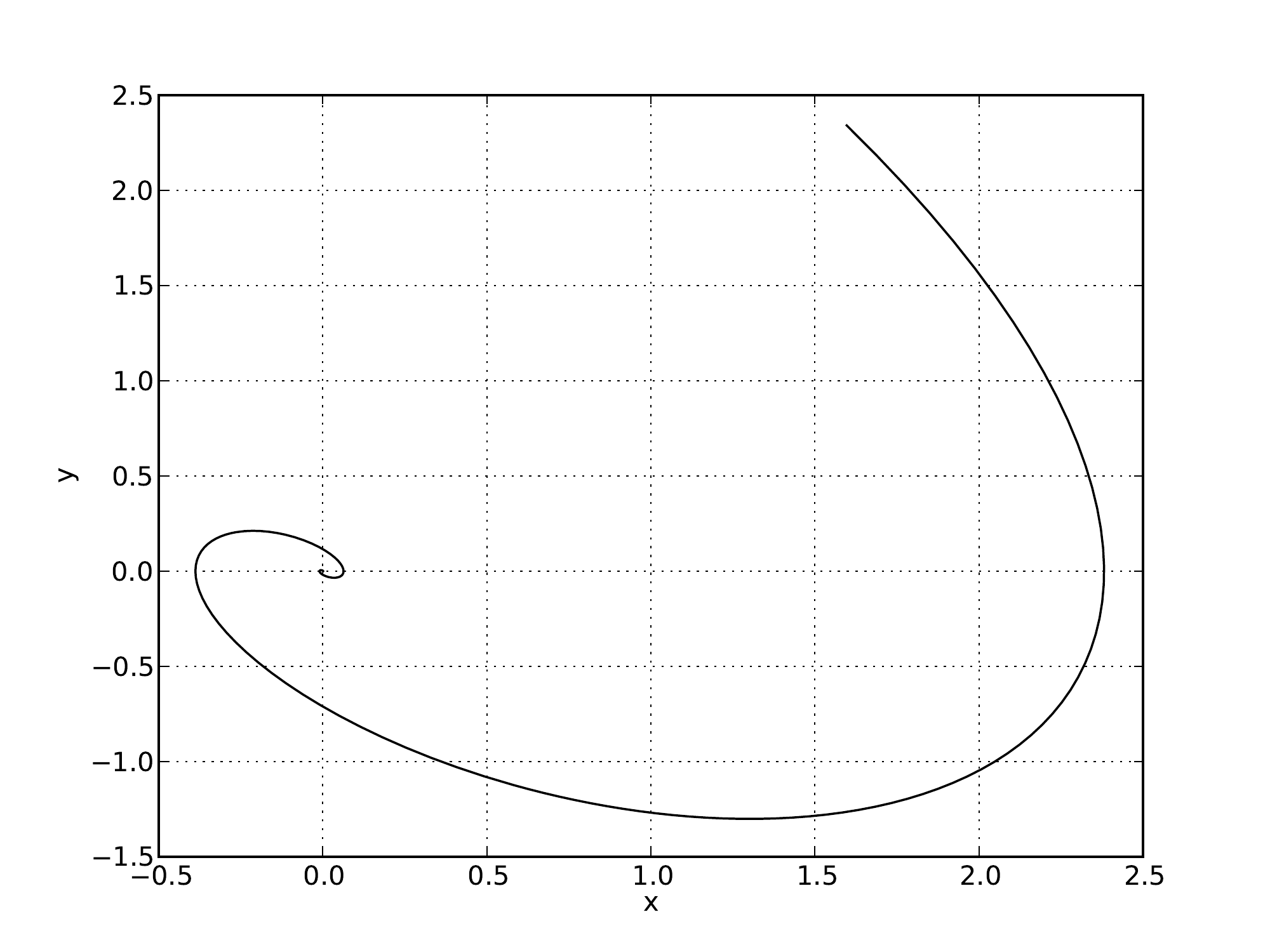}
\caption{A trajectory of \eqref{eq:example1} with $\varepsilon = 0$}
\label{fig:SP}
\end{minipage}
\quad
\begin{minipage}[b]{0.45\textwidth}
\includegraphics[width = \textwidth]{./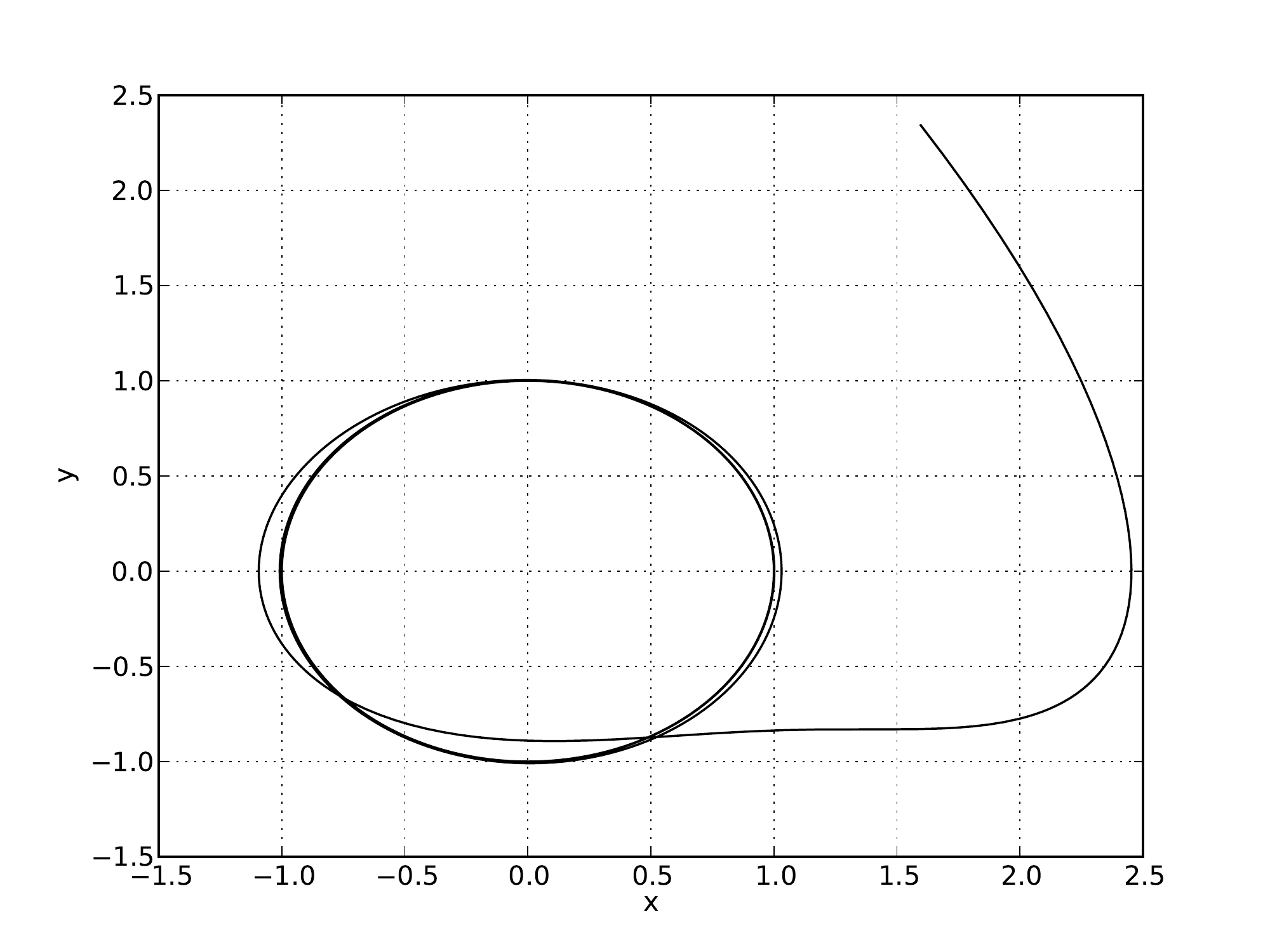}
\caption{A trajectory of \eqref{eq:example1} with $\varepsilon = 1$}
\label{fig:LC}
\end{minipage}
\end{figure}

\section{Relative limit cycles} \label{sec:RLC}
  In this section we will consider dynamical systems with Lie group symmetries.
  Let $G$ be a Lie group which acts on $M$ by a left action.
  The $G$-orbit of a point $x \in M$ is the set $[x] := \{ g \cdot x \vert g \in G \} \equiv G \cdot x$.
  We denote the quotient space by $[M] := \{ [x] : x \in M \}$ and we call the map $\pi: x \in M \mapsto G \cdot x \in [M]$ the quotient projection.
  If the action of $G$ is free and proper, then the quotient projection is a smooth surjection and the triple $(M,[M], \pi)$ is a fiber bundle known as a \emph{principal $G$-bundle} \cite[Proposition 4.1.23]{FOM}.
  
  We now present the natural notions of $G$-invariance for function on $M$.
  Note that for any $[f] \in C^{\infty}([M])$, we can define the smooth function $[f] \circ \pi \in C^{\infty}(M)$.
  Moreover, $[f] \circ \pi$ is $G$-invariant because
  \[
  	[f] \circ \pi(g \cdot x) = [f]( G \cdot (g \cdot x) ) = [f]( G \cdot x) = [f] \circ \pi(x).
  \]
  Conversely, for a $G$-invariant function $f \in C^{\infty}(M)$, we see that $f( G \cdot x) = f(x)$.
  Noting that the left-hand side of this equation involves the application of $f$ to a $G$-orbit, we have apparently found a function $[f] \in C^{\infty}( [M] )$ such that $[f] \circ \pi = f$.
  In other words, the set of $G$-invariant functions on $M$ is identifiable with the set of smooth function on $[M]$.

  This $G$-invariance for functions on $M$ extends to $G$-invariant vector fields.
  We do this by extending the action on $M$ to an action on $TM$ by the tangent lift.
  In particular, the action of $g$ on a point $v = \frac{dx}{dt} \in TM$ is given by
  \[
  	g \cdot v := \left. \frac{d}{dt} \right|_{t=0} (g \cdot x(t) ).
  \]
  In this case, we call $X \in \mathfrak{X}(M)$ a $G$-invariant vector field if 
  \begin{align}
  	g \cdot X(x) = X(g \cdot x) \quad \forall g \in G, x \in M. \label{eq:Gvf}
  \end{align}
  Moreover, the flow $\Phi^X_t$ is $G$-invariant as well.
  For if $X$ is $G$-invariant and $x(t) \in M$ is a solution curve, then
  \[
  	\frac{d}{dt} (g \cdot x(t)) = g \cdot \dot{x}(t) = g \cdot X(x(t)) = X( g \cdot x(t)).
  \]
  Thus, $g \cdot x(t)$ is a solution and so $g \circ \Phi^X_t = \Phi^X_t \circ g$.
  
  \begin{prop}\label{prop:reduced_vf}
  	If $X \in \mathfrak{X}(M)$ is $G$-invariant then there exists a unique vector field $[X] \in \mathfrak{X}([M])$ such that $T\pi \cdot X = [X] \circ \pi$.  Moreover, the flow of $[X]$ is $\pi$-related to the flow of $X$.  In other words, the diagrams
	\[
		\begin{tikzcd}
			M \arrow{r}{X} \arrow{d}{\pi} & TM \arrow{d}{T\pi} \\
			{[M]} \arrow{r}{[X]} & {T[M]}
		\end{tikzcd}
		\qquad , \qquad
		\begin{tikzcd}
			M \arrow{r}{\Phi^X_t} \arrow{d}{\pi} & M \arrow{d}{\pi} \\
			{[M]} \arrow{r}{ \Phi^{[X]}_t} & {[M]}
		\end{tikzcd}
	\]
are commutative.
  \end{prop}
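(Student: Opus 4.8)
The plan is to construct $[X]$ by a pointwise pushforward and then verify, in order, that it is well-defined, smooth, unique, and $\pi$-related to $X$ at the level of flows. First I would define, for each orbit $[x] \in [M]$ and any chosen representative $x \in \pi^{-1}([x])$, the vector $[X]([x]) := T\pi(X(x)) \in T_{[x]}[M]$. With this definition the required identity $T\pi \cdot X = [X] \circ \pi$ holds automatically, so the genuine content of the construction is that this rule is independent of the representative $x$, and that the resulting section of $T[M]$ is smooth.

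For well-definedness I would exploit the fact that $\pi$ is constant on orbits: writing $L_g : x \mapsto g \cdot x$, we have $\pi(g \cdot x) = G \cdot (g \cdot x) = G \cdot x = \pi(x)$, i.e. $\pi \circ L_g = \pi$. Differentiating gives $T\pi \circ TL_g = T\pi$, and since the tangent-lifted action is precisely $g \cdot v = TL_g(v)$, the $G$-invariance hypothesis \eqref{eq:Gvf} yields $T\pi(X(g \cdot x)) = T\pi(g \cdot X(x)) = T\pi(X(x))$. Because any two representatives of $[x]$ differ by the $G$-action, the value $[X]([x])$ is therefore independent of the choice, so $[X]$ is well-defined. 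Uniqueness is then immediate: since $\pi$ is surjective, the relation $[X] \circ \pi = T\pi \cdot X$ pins down $[X]$ at every point of $[M]$, leaving no freedom.

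The smoothness step is where the principal bundle structure is actually needed. Because the $G$-action is free and proper, $\pi$ is a surjective submersion that admits smooth local sections $\sigma : U \to M$ with $\pi \circ \sigma = \mathrm{id}_U$ on a neighborhood $U$ of any point of $[M]$. On such a $U$ one has $[X]|_U = T\pi \circ X \circ \sigma$, a composition of smooth maps and hence smooth; as smoothness is a local property this shows $[X] \in \mathfrak{X}([M])$. I expect this to be the main obstacle, in the sense that it is the only place a hypothesis beyond formal algebra is used — every other step is a direct consequence of the surjectivity of $T\pi$ and of $G$-invariance, whereas smoothness genuinely relies on the local-section structure supplied by the free-and-proper assumption.

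Finally, for the flow statement I would take an integral curve $x(t)$ of $X$ and differentiate its projection: $\tfrac{d}{dt}\pi(x(t)) = T\pi(\dot{x}(t)) = T\pi(X(x(t))) = [X](\pi(x(t)))$, so $\pi \circ x$ is an integral curve of $[X]$ through $\pi(x(0))$. By uniqueness of integral curves this gives $\pi \circ \Phi^X_t = \Phi^{[X]}_t \circ \pi$, which is exactly the commutativity of the second diagram (the first diagram being the defining relation already established). I would note that the paper's standing convention to interpret manipulations formally lets me set aside the functional-analytic subtleties that arise when $M$ is infinite-dimensional; in the finite-dimensional Riemannian setting of Section \ref{sec:LC} the local-section argument above is fully rigorous.
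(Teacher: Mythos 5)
Your proof is correct, and it is worth noting that the paper does not actually supply a proof of this proposition: it is stated as a standard fact, with the only supporting material being the paragraph immediately preceding it, which shows that the flow of a $G$-invariant vector field commutes with the action, $g \circ \Phi^X_t = \Phi^X_t \circ g$. That computation is the paper's implicit argument, and it suggests the opposite order of reasoning from yours: since the flow maps $G$-orbits to $G$-orbits, it descends to a map on $[M]$ (smooth because $\pi$ is a surjective submersion, so a map out of $[M]$ is smooth precisely when its composition with $\pi$ is), and the reduced vector field $[X]$ is then obtained by differentiating this descended flow in $t$. You instead build $[X]$ first --- pointwise pushforward, well-definedness via $T\pi \circ TL_g = T\pi$ combined with the invariance hypothesis \eqref{eq:Gvf}, uniqueness from surjectivity of $\pi$, smoothness from local sections --- and recover the flow relation afterwards from uniqueness of integral curves. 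Both routes are standard and essentially equivalent in content; yours is more explicit about where each hypothesis enters (the free-and-proper assumption only for smoothness of $[X]$, invariance only for well-definedness), while the flow-first route gets the second diagram immediately and extracts the first from it. Your closing caveat is also apt: the paper later applies this proposition to infinite-dimensional spaces such as $T\Diff_{\vol}(\mathbb{R}^d)$, $P$, and $[P]$, where both your local-section argument and uniqueness of integral curves are only formal --- exactly the gap the paper acknowledges through its stated convention that all manipulations be interpreted formally, and which resurfaces in Section \ref{sec:analytic}.
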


  Given a pair $X \in \mathfrak{X}(M)$ and $[X] \in \mathfrak{X}([M])$ which satisfies \eqref{eq:Gvf}, we call $[X]$ the reduced vector field and $X$ the unreduced vector field.  This correspondence between $X$ and $[X]$ allows us to discuss relative periodicity.

\begin{definition}[relative periodicity]
  Let $X \in \mathfrak{X}(M)$ be a $G$-invariant vector field on the $G$-principal bundle $\pi:M \to [M]$. Let $[X] \in \mathfrak{X}([M])$ be the reduced vector field.  The orbit of a solution curve $x(t)$ of $X$ is called a \emph{relatively periodic orbit} if $\pi(x(t))$ is a periodic orbit of $[X]$.
  \end{definition}

A remarkable characteristic of relative periodic orbits is the following.

\begin{prop} \label{prop:regular}
	Let $X \in \mathfrak{X}(M)$ be a $G$-invariant vector field.  If $x(t)$ is a relative periodic orbit of period $T$, then there exists some $g \in G$ such that $x(T) = g \cdot x(0)$.  Moreover, $x(kT) = g^k \cdot x(0)$ for each $k \in \mathbb{Z}$.
\end{prop}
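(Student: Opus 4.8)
The plan is to first use the definition of relative periodicity to place $x(T)$ and $x(0)$ in a common $G$-orbit, thereby producing the group element $g$, and then to propagate this single $g$ through all integer multiples of the period by exploiting the equivariance of the flow $\Phi^X_t$ established in the paragraph preceding Proposition \ref{prop:reduced_vf}.

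For the existence of $g$, I would argue as follows. By hypothesis $\pi(x(t))$ is a periodic orbit of $[X]$ with period $T$, so in particular $\pi(x(T)) = \pi(x(0))$. Since $\pi$ is the quotient projection onto $G$-orbits, the fiber $\pi^{-1}(\pi(x(0)))$ is precisely the orbit $G \cdot x(0)$; hence $x(T) \in G \cdot x(0)$, which is exactly the assertion that $x(T) = g \cdot x(0)$ for some $g \in G$. This is the only place where the principal-bundle structure enters, and it is immediate from the definition of $\pi$.

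For the iterate formula, fix the element $g$ just produced and induct on $k$. The base case $k = 1$ is the identity $x(T) = g \cdot x(0)$. For the inductive step, write $x((k+1)T) = \Phi^X_T(x(kT))$ using the flow property, substitute the inductive hypothesis $x(kT) = g^k \cdot x(0)$, and then commute $g^k$ past the flow using $\Phi^X_T \circ g = g \circ \Phi^X_T$. This yields $x((k+1)T) = g^k \cdot \Phi^X_T(x(0)) = g^k \cdot x(T) = g^{k+1} \cdot x(0)$, completing the induction for $k \geq 0$.

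To reach negative $k$, I would first observe that equivariance of the time-$T$ flow forces equivariance of its inverse: applying $\Phi^X_{-T}$ to both sides of $\Phi^X_T(g \cdot y) = g \cdot \Phi^X_T(y)$ gives $\Phi^X_{-T} \circ g = g \circ \Phi^X_{-T}$. Then $g \cdot x(-T) = \Phi^X_{-T}(g \cdot x(0)) = \Phi^X_{-T}(x(T)) = x(0)$, so $x(-T) = g^{-1} \cdot x(0)$, and the same induction run downward gives $x(kT) = g^k \cdot x(0)$ for all $k < 0$ (the case $k = 0$ being trivial). The only point requiring care — and the closest thing to an obstacle — is ensuring that one and the same $g$ serves for every $k$; this is guaranteed not by any uniqueness of $g$ but by the equivariance of the flow, which lets the fixed group element commute freely through each additional period.
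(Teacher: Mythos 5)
Your proof is correct. The paper actually states Proposition \ref{prop:regular} without any proof, and your argument --- identifying the fiber $\pi^{-1}(\pi(x(0)))$ with the orbit $G \cdot x(0)$ to produce $g$, then propagating it through successive periods via the $G$-equivariance of the flow $\Phi^X_t$ established just before Proposition \ref{prop:reduced_vf} --- is precisely the intended argument the paper leaves implicit, handled cleanly including the negative-$k$ case.
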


We call the element $g \in G$ of Proposition \ref{prop:regular} the \emph{phase shift} of the periodic orbit $\pi(x(t))$.
To hint at the relevance of this concept to locomotion, we should mention that if $G = \SE(d)$, the phase shift implies that the system undergoes regular and periodic changes in position and orientation.  We now seek to study exponentially stable manifestations of relative periodicity.  This brings us to the notion of a relative limit cycle.

\begin{definition}
 An orbit $x(t) \in M$ of a $G$-invariant vector field $X \in \mathfrak{X}(M)$ is called a \emph{relative exponentially stable limit cycle} if $\pi(x(t))$ is an exponentially stable limit cycle for the reduced vector field $[X]$.  Finally, if $Y \in \mathfrak{X}(M)^{S^1}$ is $G$-invariant with reduced vector field $[Y] \in \mathfrak{X}_{S^1}([M])$, then we call the orbit of a trajectory $x(t) \in M$ a \emph{non-autonomous exponentially stable relative limit cycle} if $\pi(x(t)) \in [M]$ is a non-autonomous exponentially stable limit cycle.
\end{definition}

\begin{prop} \label{prop:stable}
 Let $X \in \mathfrak{X}(M)$ be $G$-invariant, and let $[X] \in \mathfrak{X}([M])$ be the reduced vector field of $X$.
 Let $\Gamma \subset [M]$ be a limit cycle of $[X]$.  Then there exists an open neighborhood $U$ of $\pi^{-1}( \Gamma) \subset M$ wherein each point is attracted towards a relative limit cycle contained in $\pi^{-1}( \Gamma)$.
\end{prop}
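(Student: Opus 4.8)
The plan is to pull the exponentially stable basin of $\Gamma$ back through $\pi$ and then use the reconstruction equation to promote convergence in $[M]$ to convergence in $M$. Two structural facts do most of the work. First, since the flow of $X$ is $\pi$-related to the flow of $[X]$ (Proposition \ref{prop:reduced_vf}), the set $N := \pi^{-1}(\Gamma)$ is invariant under $\Phi^X_t$, and every integral curve of $X$ lying in $N$ projects onto the periodic orbit $\Gamma$ and is therefore, by definition, a relative exponentially stable limit cycle. Second, because $\Gamma$ is an exponentially stable limit cycle of $[X]$, it possesses an open basin $V \subset [M]$ on which $d_{[M]}(\Phi^{[X]}_t([x]),\Gamma) \le e^{\lambda t} d_{[M]}([x],\Gamma)$ for some $\lambda < 0$; moreover, as is standard for exponentially stable (hence hyperbolic) periodic orbits, each trajectory in $V$ carries a well-defined asymptotic phase, so it converges exponentially to a definite periodic solution $c(t+s_0)$ whose image is $\Gamma$. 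I take $U := \pi^{-1}(V)$, which is open by continuity of $\pi$ and contains $N$.

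Fix $x_0 \in U$ and write $x(t) = \Phi^X_t(x_0)$, so that $\pi(x(t)) = \Phi^{[X]}_t(\pi(x_0)) \to \Gamma$. To identify the particular relative limit cycle that attracts $x(t)$, I would pass to a local trivialization $\pi^{-1}(V') \cong V' \times G$ over a neighborhood of $\Gamma$ (using that $\Gamma$ is compact and that the projected trajectory eventually enters such a neighborhood). In these coordinates, $G$-invariance forces $X$ into the reconstruction form $\dot b = [X](b)$, $\dot g = TL_g\,\eta(b)$ for some $\eta : V' \to \mathfrak{g}$, so that $x(t) = (b(t),g(t))$ with $b(t) \to b^*(t) := c(t+s_0)$ exponentially and hence $\eta(b(t)) \to \eta^*(t) := \eta(b^*(t))$ exponentially, where $\eta^*$ is $T$-periodic. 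Letting $P^*(t)$ solve the periodic limit equation $\dot P^* = TL_{P^*}\eta^*$, the proposed attracting cycle is $y(t) := (b^*(t),\, w_\infty P^*(t))$ for a suitable constant $w_\infty \in G$; this is a genuine trajectory of $X$ inside $N$ and hence a relative limit cycle.

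The heart of the matter, and the step I expect to be the main obstacle, is showing that the group phase converges, i.e. that $w(t) := g(t)P^*(t)^{-1}$ has a limit $w_\infty \in G$. A direct computation gives $\dot w = \mathrm{Ad}_{g(t)}\bigl(\eta(b(t)) - \eta^*(t)\bigr)\, w$, so convergence of $w$ reduces to integrability of $t \mapsto \mathrm{Ad}_{g(t)}\bigl(\eta(b(t)) - \eta^*(t)\bigr)$. The danger is that $G = \SE(d)$ is non-compact and $g(t)$ may run off to infinity — this is precisely directional swimming — so that $\mathrm{Ad}_{g(t)}$ is unbounded. The resolution is that $\eta$ is bounded near the compact orbit $\Gamma$, whence the body-frame velocity $g^{-1}\dot g = \eta(b(t))$ is bounded, the translational part of $g(t)$ grows at most linearly, and $\|\mathrm{Ad}_{g(t)}\|$ grows at most polynomially in $t$, while $\eta(b(t)) - \eta^*(t)$ decays exponentially. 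The product is therefore integrable and $w_\infty := \lim_{t\to\infty} g(t)P^*(t)^{-1}$ exists.

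With $w_\infty$ in hand, the second component of $x(t)$ relative to $y(t)$ satisfies $g(t)\bigl(w_\infty P^*(t)\bigr)^{-1} = w(t)w_\infty^{-1} \to e$, which together with $b(t) \to b^*(t)$ yields $d_M(x(t),y(t)) \to 0$. Here the final distance estimate is immediate when the metric is $\SE(d)$-invariant, as it is for the kinetic-energy metric arising in the swimming application, and otherwise follows from local triviality along the compact orbit $\Gamma$; flagging this dependence on the (non-)invariance of the metric is the one technical point I would be careful to state explicitly. Since $y$ lies in $N = \pi^{-1}(\Gamma)$ and is a relative limit cycle, this shows that every point of $U$ is attracted toward a relative limit cycle contained in $\pi^{-1}(\Gamma)$, completing the argument.
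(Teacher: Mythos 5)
Your argument is sound for the group that actually matters here, $G=\SE(d)$, but it is a genuinely different proof from the paper's, and it proves more. The paper's proof is soft and short: it takes $[U]$ to be a basin of $\Gamma$, sets $U=\pi^{-1}([U])$, and invokes Lemma \ref{lem:metric} to write $d(x(t),\pi^{-1}(\Gamma))=[d](\pi(x(t)),\Gamma)$, which decays exponentially because $\Gamma$ is an exponentially stable limit cycle of $[X]$; it then simply observes that $\pi^{-1}(\Gamma)$ is foliated by relative limit cycles. In other words, the paper only establishes attraction to the \emph{set} $\pi^{-1}(\Gamma)$ and never attempts to single out a leaf of the foliation. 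Your reconstruction argument --- asymptotic phase in the base, $\dot g = TL_g\,\eta(b)$, and convergence of the group phase $w(t)=g(t)P^*(t)^{-1}$ --- delivers a genuine asymptotic-phase statement: convergence to one particular relative limit cycle. That is strictly stronger than what the paper's metric argument can see, and it is the reading a literal parse of the statement suggests. What the paper's route buys in exchange is brevity and indifference to the geometry of $G$: it works verbatim for any Lie group acting freely and properly with a $G$-invariant metric (an hypothesis of Lemma \ref{lem:metric} that both you and the paper need, and which you correctly flag).

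The caveat is exactly at the step you called the heart of the matter. Proposition \ref{prop:stable} is stated for an \emph{arbitrary} Lie group $G$, and your integrability argument uses the specific structure of $\SE(d)$: rotations stay bounded, translations grow at most linearly, so $\|\mathrm{Ad}_{g(t)}\|$ grows polynomially and loses to the exponential decay of $\eta(b(t))-\eta^*(t)$. For a general noncompact $G$, boundedness of $g^{-1}\dot g$ only gives $\|\mathrm{Ad}_{g(t)}\|\le Ce^{ct}$, and if $c>|\lambda|$ the integral defining $w_\infty$ can diverge; the group phase may then drift forever, and convergence to a single leaf can genuinely fail even though attraction to $\pi^{-1}(\Gamma)$ still holds. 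So as a proof of the proposition in its stated generality, your argument has a gap that the paper's weaker set-attraction argument avoids; as a proof for the $\SE(d)$ swimming application it is correct and sharper. (Two minor points in the same spirit: your trivialization $\pi^{-1}(V')\cong V'\times G$ over a neighborhood of $\Gamma$ exists because $\SE(d)$ is connected --- principal bundles over a circle with connected structure group are trivial --- and this too can fail for disconnected $G$; and your claim that trajectories inside $N=\pi^{-1}(\Gamma)$ are relative limit cycles is the same observation as the paper's closing sentence, so that part is shared rather than new.)
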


Before we provide the proof of this proposition, it is useful to illustrate the following lemma which relates the distance metric on $M$ with the natural distance metric on $[M]$.

\begin{lem} \label{lem:metric}
  If the Riemannian metric on $M$ is $G$-invariant, then the distance metric $d: M \times M \to \mathbb{R}$ is $G$-invariant as well.  The function on $[M] \times [M]$ given by $[d] ( [x] , [y] ) := d( G \cdot x , G \cdot y)$ is a metric and satisfies the equality $d( x , G \cdot y ) = [d]( [x] , [y])$.
\end{lem}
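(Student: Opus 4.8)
The plan is to work directly with the length-space definition of the Riemannian distance, $d(x,y) = \inf_\gamma \int_0^1 \|\dot\gamma(t)\|\,dt$, where the infimum ranges over piecewise-smooth curves $\gamma$ from $x$ to $y$, and to read the symbol $d(G\cdot x, G\cdot y)$ as the set-to-set distance $\inf_{g,h\in G} d(g\cdot x, h\cdot y)$. Since this expression depends only on the orbits themselves, the function $[d]$ is manifestly well-defined on $[M]\times[M]$. The argument then splits into three stages: first establish that $d$ itself is $G$-invariant, then use this invariance to collapse the double infimum defining $[d]$ into the single infimum appearing in the claimed identity $d(x, G\cdot y) = [d]([x],[y])$, and finally verify that $[d]$ satisfies the metric axioms.

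For the first stage I would use that $G$-invariance of the Riemannian metric means each $g\in G$ acts as an isometry, so its tangent lift preserves the norm, $\|Tg\cdot v\| = \|v\|$. Consequently, if $\gamma$ is any curve from $x$ to $y$, then $t\mapsto g\cdot\gamma(t)$ is a curve from $g\cdot x$ to $g\cdot y$ of the same length. Because $g$ is invertible, this sets up a length-preserving bijection between admissible curves for the pair $(x,y)$ and those for $(g\cdot x, g\cdot y)$, forcing the two infima to coincide and giving $d(g\cdot x, g\cdot y) = d(x,y)$.

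With invariance in hand, the identity $d(x,G\cdot y)=[d]([x],[y])$ is immediate: $d(g\cdot x, h\cdot y) = d(x, g^{-1}h\cdot y)$, and as $g,h$ range over $G$ the product $g^{-1}h$ ranges over all of $G$, so $\inf_{g,h} d(g\cdot x, h\cdot y) = \inf_{k} d(x, k\cdot y) = d(x, G\cdot y)$. Symmetry and non-negativity of $[d]$ are then inherited directly from $d$. For the triangle inequality I would fix $\varepsilon>0$, choose $g_1$ nearly realizing $[d]([x],[y])$ and $g_2$ nearly realizing $[d]([y],[z])$, and estimate $d(x, g_1 g_2\cdot z) \leq d(x, g_1\cdot y) + d(g_1\cdot y, g_1 g_2\cdot z)$; the second term equals $d(y, g_2\cdot z)$ after applying $g_1^{-1}$ and invoking invariance, so letting $\varepsilon\to 0$ yields $[d]([x],[z])\le [d]([x],[y])+[d]([y],[z])$.

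The step I expect to be the main obstacle is the identity of indiscernibles, namely showing that $[d]([x],[y])=0$ forces $[x]=[y]$. This is the only axiom that cannot be read off from $d$ formally, since it requires knowing that vanishing distance to the orbit $G\cdot y$ places $x$ \emph{in} that orbit rather than merely in its closure. Here I would invoke two facts: first, that the action is proper (already assumed in the principal-bundle setting of the preceding discussion), so every orbit $G\cdot y$ is closed in $M$; and second, the standard fact that the Riemannian distance $d$ induces the manifold topology, whence $d(x,G\cdot y)=0$ is equivalent to $x\in\overline{G\cdot y}=G\cdot y$, giving $[x]=[y]$. The reverse implication is trivial, since $[x]=[y]$ makes the defining infimum include the value $d(x,x)=0$.
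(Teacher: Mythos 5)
Your proof is correct. Note, however, that there is nothing in the paper to compare it against: the paper states Lemma \ref{lem:metric} and proceeds directly to use it in the proof of Proposition \ref{prop:stable} without supplying any proof, so your argument fills a genuine omission rather than paralleling an existing one. Your write-up follows the standard route --- isometries preserve curve lengths, hence $d$ is $G$-invariant; the double infimum over $G \times G$ collapses to a single one, giving $d(x, G\cdot y) = [d]([x],[y])$; symmetry, non-negativity, and the triangle inequality follow formally --- and you correctly isolate the only substantive point, the identity of indiscernibles, resolving it with exactly the two facts needed: properness of the action (assumed in the paper's principal-bundle setting) forces orbits to be closed, and the Riemannian distance on a finite-dimensional manifold induces the manifold topology, so $d(x, G\cdot y)=0$ places $x$ in $\overline{G\cdot y} = G\cdot y$. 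Had you omitted properness, the statement would actually fail (e.g.\ an irrational-flow-type action with dense orbits gives $[d] \equiv 0$ between distinct orbits), so flagging that hypothesis is not pedantry but the crux of the lemma.
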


Equipped with Lemma \ref{lem:metric}, we are now ready to prove Proposition \ref{prop:stable}.

\begin{proof}[proof of Proposition \ref{prop:stable}]
	Let $[U]$ be a neighborhood of $\Gamma$.  Then $U = \pi^{-1}([U]) \subset M$ is an open set as well, since $\pi$ is continuous.  Therefore, given an arbitrary $x \in U$, we see by Lemma \ref{lem:metric} that $\frac{d}{dt} ( d( x , \pi^{-1}(\Gamma) ) ) = \frac{d}{dt}( d(\pi(x),\Gamma) ) < \lambda d(\pi(x),\Gamma) = \lambda d(x, \pi^{-1}( \Gamma) )$.  Thus, the solution is attracted towards $\pi^{-1}(\Gamma)$.  However, $\pi^{-1}(\Gamma)$ is foliated by relative limit cycles.
\end{proof}

Later we will want to see how time-periodic perturbations generate stable and relatively periodic behavior.  This motivates us to state the following proposition.

\begin{prop}\label{prop:NAESRLC}
	Let $X \in \mathfrak{X}(M)$ be $G$-invariant and let $[X] \in \mathfrak{X}([M])$ be the reduced vector field of $X$.
	Let $q^*$ be an exponentially stable equilibria of $[X]$.
	If $X_\varepsilon \in \mathfrak{X}(M)^{S^1}$ is a time-periodic $G$-invariant deformation of $X$,
	then for sufficiently small $\varepsilon > 0$ the vector field $X_\varepsilon$ admits a non-autonomous exponentially stable relative limit cycle.
\end{prop}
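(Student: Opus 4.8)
The plan is to push the whole problem down to the quotient $[M]$, apply the non-autonomous limit cycle result we already possess there, and then lift the resulting orbit back up to $M$. Concretely, I would first check that a $G$-invariant time-periodic deformation descends to a time-periodic deformation of the reduced vector field, then invoke Proposition \ref{prop:NAESLC} in the reduced phase space $[M]$, and finally use the $\pi$-relatedness of flows from Proposition \ref{prop:reduced_vf} to produce the desired relative limit cycle upstairs. In spirit the argument simply commutes reduction past the earlier propositions.

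For the reduction step, the cleanest framing is to work on the augmented bundle $M \times S^1 \to [M] \times S^1$, where $G$ acts trivially on the $S^1$ factor. Since the $G$-action on $M$ is free and proper, so is the action on $M \times S^1$, and the quotient is again a principal $G$-bundle. The autonomous field $X_\varepsilon \times \partial_\theta$ is $G$-invariant because $X_\varepsilon$ is $G$-invariant and $\partial_\theta$ is untouched by $G$, so Proposition \ref{prop:reduced_vf} yields a unique reduced field, visibly of the form $[X_\varepsilon] \times \partial_\theta$. Thus each $X_\varepsilon$ reduces to a well-defined $[X_\varepsilon] \in \mathfrak{X}([M])^{S^1}$. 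One then checks that $[X_0] = [X]$ and that $\varepsilon \mapsto [X_\varepsilon]$ is continuous, so that $[X_\varepsilon]$ is a genuine time-periodic deformation of $[X]$ in the sense fixed in the conventions.

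With $q^*$ an exponentially stable equilibrium of $[X]$ and $[X_\varepsilon]$ a time-periodic deformation of $[X]$, Proposition \ref{prop:NAESLC} applies verbatim in the quotient: for all sufficiently small $\varepsilon > 0$, $[X_\varepsilon]$ admits a non-autonomous exponentially stable limit cycle $[\Gamma]_\varepsilon$ in a neighborhood of $q^*$, i.e. an exponentially stable limit cycle of $[X_\varepsilon] \times \partial_\theta$ in $[M] \times S^1$. To transport this upstairs, I apply the flow half of Proposition \ref{prop:reduced_vf} to the autonomous $G$-invariant field $X_\varepsilon \times \partial_\theta$: its flow is $\pi$-related to that of $[X_\varepsilon] \times \partial_\theta$. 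Choosing any point in the fiber over a point of $[\Gamma]_\varepsilon$ and flowing it then produces a trajectory $x(t) \in M$ of $X_\varepsilon$ with $\pi(x(t)) = [\Gamma]_\varepsilon$ as an orbit. Since $\pi(x(t))$ is by construction the non-autonomous exponentially stable limit cycle $[\Gamma]_\varepsilon$, the orbit of $x(t)$ is, by the very definition, a non-autonomous exponentially stable relative limit cycle.

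The conceptual steps — reduce, invoke Proposition \ref{prop:NAESLC}, lift — are immediate once set up, and I expect the only point requiring genuine care to be the claim that the reduction map $X_\varepsilon \mapsto [X_\varepsilon]$ sends deformations to deformations. This is a statement about continuity of reduction in the Fr\'echet topology on $\mathfrak{X}(M)^{S^1}$. Because $\pi$ is a surjective submersion, the reduced field is characterized pointwise by $[X_\varepsilon]([x]) = T\pi(X_\varepsilon(x))$ for any $x \in \pi^{-1}([x])$, so continuity of $\varepsilon \mapsto [X_\varepsilon]$ follows from continuity of $\varepsilon \mapsto X_\varepsilon$ together with the smoothness of $T\pi$; making this rigorous through local principal-bundle trivializations, in which reduction is literally restriction to a slice, is the only nontrivial bookkeeping. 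Note in particular that $x(t)$ itself need not be periodic in $M$ — it is only \emph{relatively} periodic, governed by an $\SE(d)$ phase shift — which is precisely the phenomenon the proposition is meant to capture.
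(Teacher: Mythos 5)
Your proposal is correct and takes essentially the same route as the paper's own proof: reduce the $G$-invariant deformation to $[M]$, check that $[X_\varepsilon]$ is a time-periodic deformation of $[X]$, apply Proposition \ref{prop:NAESLC} to obtain the non-autonomous exponentially stable limit cycle near $q^*$, and conclude from the definition of a relative limit cycle. The additional details you supply --- the augmented bundle $M \times S^1 \to [M] \times S^1$, the continuity of the reduction map $\varepsilon \mapsto [X_\varepsilon]$, and the lifting of the cycle via $\pi$-relatedness of flows --- simply make explicit the steps the paper's terse proof leaves implicit.
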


\begin{proof}
  Let $[X_\varepsilon] \in \mathfrak{X}( [M] )^{S^1}$ be the reduced vector field corresponding to $X_\varepsilon$ for each $\varepsilon$.
  We can then verify that $[X_\varepsilon]$ is a deformation of $[X]$.
  By Proposition \ref{prop:NAESLC}, the vector field $[X_\varepsilon]$ admits a non-autonomous exponentially stable limit cycle for sufficiently small $\varepsilon$.
  It follows that $X_\varepsilon$ must admit non-autonomous exponentially stable relative limit cycles.
\end{proof}  

\begin{example} \label{ex:3D}
Consider the system on $\mathbb{R}^3$ given by
\begin{align}
	\frac{d}{dt} \begin{bmatrix} x \\ y \\ z \end{bmatrix} = \begin{bmatrix} y \\ -x - y \\ y - x^2 - x y \end{bmatrix} + \varepsilon \begin{bmatrix} 0 \\ \sin(t) \\ \cos(t) \end{bmatrix}. \label{eq:example2}
\end{align}
We see that this system is invariant under translations in the $z$-coordinate.
This is a $(\mathbb{R},+)$-symmetry and the quotient projection is given by $\pi(x,y,z) = (x,y)$.
The reduced vector field is given by equation \eqref{eq:example1}.
By Proposition \ref{prop:NAESRLC}, \eqref{eq:example2} must admit relative limit cycles for sufficiently small $\varepsilon > 0$.
Moreover, as the symmetry of the system is along the $z$-axis, by Proposition \ref{prop:regular} each period of the relative limit cycle should be related to the previous period by a constant vertical shift.
Typical trajectories for $\varepsilon = 0,1$ are depicted in Figures \ref{fig:RSP} and \ref{fig:RLC}
\end{example}

\begin{figure}[h]
\centering
\begin{minipage}[b]{0.45\textwidth}
\includegraphics[clip=true,trim=2cm 1cm 2cm 1cm, width = \textwidth]{./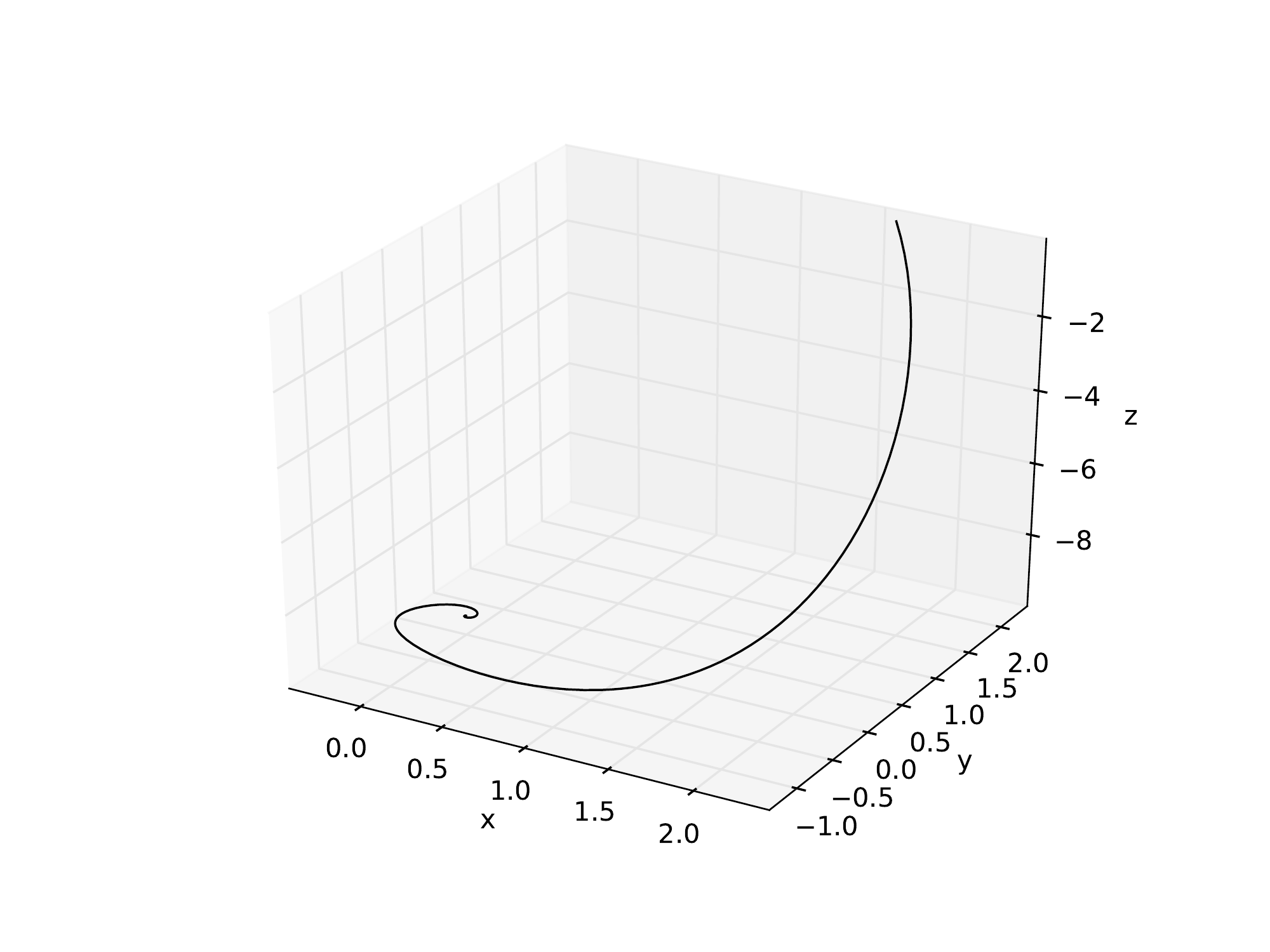}
\caption{A trajectory of \eqref{eq:example2} with $\varepsilon = 0$}
\label{fig:RSP}
\end{minipage}
\quad
\begin{minipage}[b]{0.45\textwidth}
\includegraphics[clip=true,trim=2cm 1cm 2cm 1cm, width = \textwidth]{./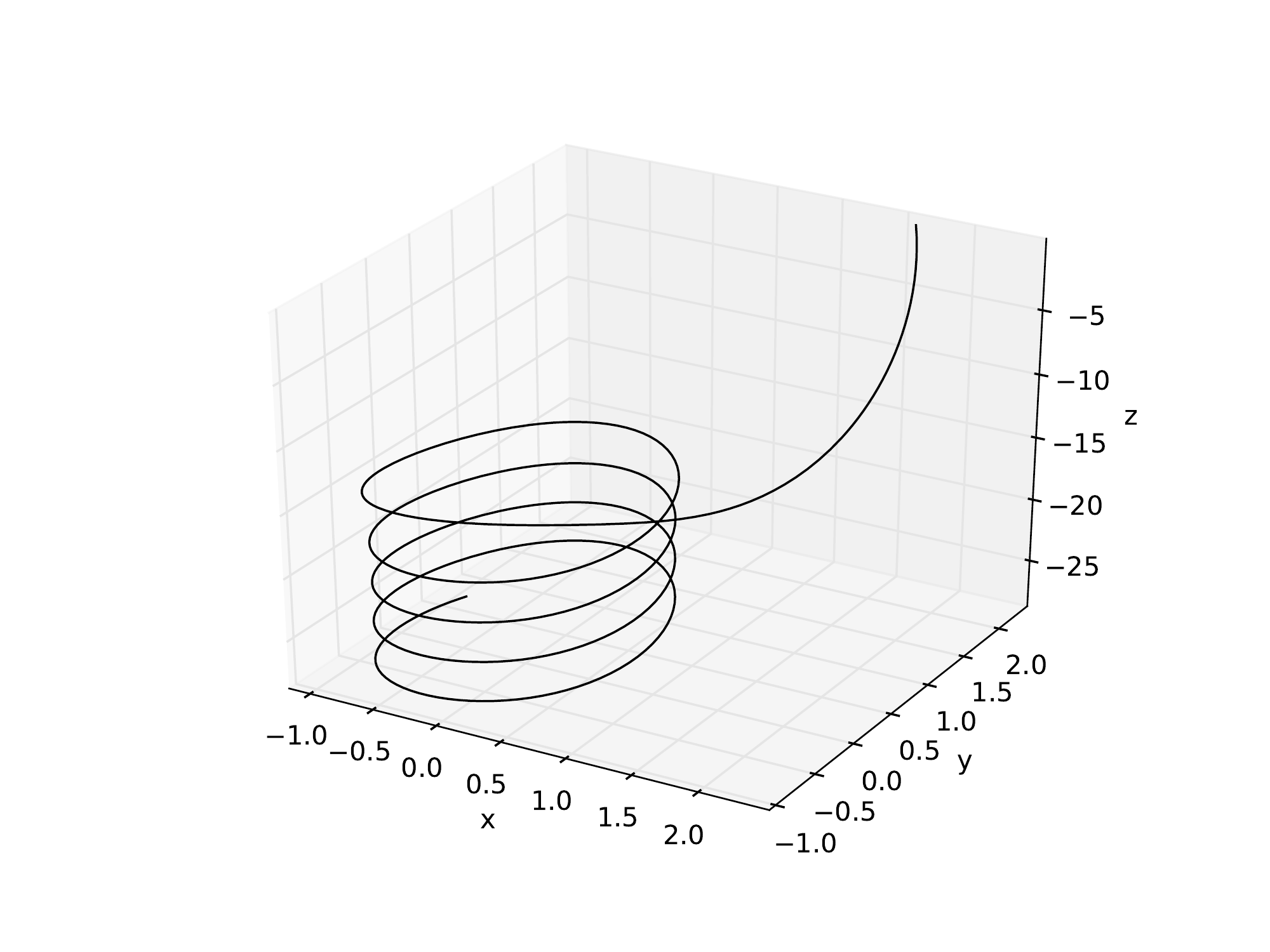}
\caption{A trajectory of \eqref{eq:example2} with $\varepsilon = 1$}
\label{fig:RLC}
\end{minipage}
\end{figure}

Example \ref{ex:3D} illustrates how a $(\mathbb{R},+)$-symmetry lead to a system with a stable non-autonomous relative limit cycle wherein each period was related to the previous by a constant translation along the $z$-axis.
The goal of this article is to characterize swimming as a stable non-autonomous relative limit cycle with respect to an $\SE(d)$-symmetry wherein each period is related to the previous by a constant translation and rotation of space.
In order to do this, we must express fluid-structure problems in a geometric formalism.
In particular, we will follow the constructions of \cite{JaVa2013} to do this, using the Lagrange-d'Alembert formalism.

\section{Lagrange-d'Alembert formalism}  \label{sec:LDA}
In this section, we review the Lagrange-d'Alembert formalism for simple mechanical systems.
If $Q$ is equipped with a Riemanian metric, $\lb \cdot , \cdot \rb_Q: TQ \oplus TQ \to \mathbb{R}$, then it is customary to consider Lagrangians of the form
  \begin{align}
  	L(q,\dot{q}) = \frac{1}{2}  \lb \dot{q} ,\dot{q}\rb_Q - U(q), \label{eq:SM_Lag}
  \end{align}
  where $U: Q \to \mathbb{R}$.  We call a Lagrangian of this form a \emph{simple mechanical Lagrangian}.  For simple mechanical Lagrangians, and external force fields $F:TQ \to T^{\ast}Q$, the Lagrange-D'Alembert equations take the form
  \begin{align}
  	\frac{D \dot{q}}{Dt} = \nabla U(q) + \sharp \left( F(q,\dot{q}) \right), \label{eq:LDA}
  \end{align}
  where $\frac{D}{Dt}$ is the Levi-Cevita covariant derivative, $\nabla U$ is the gradient of $U$, and $\sharp : T^{\ast}Q \to TQ$ is the sharp operator induced by the Riemannian metric \cite[Proposition 3.7.4]{FOM}.
  It is notable that \eqref{eq:LDA} is equivalent to the Lagrange-d'Alembert variational principle
\[
	\delta \int_0^T L(q,\dot{q}) dt = \int_0^{T}{ \lb F(q,\dot{q}) , \delta q \rb dt }
\]
with respect to variations $\delta q$ with fixed end points \cite[Chapter 7]{MandS}.
We denote the vector field associated to \eqref{eq:LDA} by $X_{TQ} \in \mathfrak{X}(TQ)$, and its flow is given by $\Phi^{TQ}_t$.

%
%

\section{Fluid-structure interaction}  \label{sec:passive}
In this section, we will place fluid structure-interactions in the Lagrange-d'Alembert formalism.
Specifically, we will understand a body immersed in a fluid as a simple mechanical Lagrangian system with a dissipative force field, in the sense of \eqref{eq:LDA}.
This is commonly referred to as the ``material description'' in fluid mechanics.
Moreover, we will reduce the system by a particle relabeling symmetry, so that the fluid is described in the ``spatial description'' via the Navier-Stokes equations.
Finally, we will identify `frame-invariance' (a.k.a objectivity) as a left $\SE(d)$-symmetry.

\subsection{Navier stokes fluids in the Lagrange-d'Alembert formalism} \label{sec:passive fluids}
  In this paper, we seek to understand swimming in the mid-Reynolds regime.
  Specifically this entails invoking the Navier-Stokes equations with non-zero viscosity.
  It was discovered in \cite{Arnold1966} that the Navier-Stokes equations with zero viscosity could be handled in the Euler-Poincar\'e formalism.
  Moreover, it is mentioned in \cite[Chapter 1, section 12]{ArKh1992} that the Navier-Stokes equations can be viewed in this framework with the simple addition of a dissipative force.
  In this section, we will describe this formulation of the Navier-Stokes equations.
  
  Consider the manifold $\mathbb{R}^d$ with the standard flat metric and volume form $dx = dx^1 \wedge \cdots \wedge dx^d$.
  One can consider the infinite-dimensional Lie group of volume-preserving diffeomorphisms, $\Diff_{\vol} (\mathbb{R}^d)$, where the group multiplication is simply the composition of diffeomorphisms.\footnote{This is a pseudo Lie group.  We will assume that all diffeomorphisms approach the identity as $\|x\| \to \infty$ sufficiently rapidly for all computations to make sense.  In particular, the existence of a Hodge-decomposition for our space is important.  Sufficient conditions for our purposes are provided in \cite{Cantor1975} and \cite{Troyanov2009}.}
  The configuration of a fluid flowing on $\mathbb{R}^d$ relative to some reference configuration is described by an element $\varphi \in \Diff_{\vol}(\mathbb{R}^d)$.
  Given a curve $\varphi_t \in \Diff_{\vol}(\mathbb{R}^d)$, one can differentiate it to obtain a tangent vector $\dot{\varphi}_t = \frac{d}{dt} \varphi_t \in T \Diff_{\vol}(\mathbb{R}^3)$.
  One can interpret $\dot{\varphi}$ as a map from $\mathbb{R}^d$ to $T\mathbb{R}^d$ by the natural definition $\dot{\varphi}(x) = \frac{d}{dt} \varphi_t(x)$.
  Therefore, a tangent vector, $\dot{\varphi} \in T \Diff_{\vol}(\mathbb{R}^d)$, over a diffeomorphism $\varphi \in \Diff_{\vol}(\mathbb{R}^d)$ is simply the smooth map $\dot{\varphi}: \mathbb{R}^d \to T\mathbb{R}^d$, such that $\tau_{\mathbb{R}^d} \circ \dot{\varphi} = \varphi$ where $\tau_{\mathbb{R}^d} : T\mathbb{R}^d \to \mathbb{R}^d$ is the tangent bundle projection.
  Moreover, $\dot{\varphi} \circ \varphi^{-1}$ is a smooth divergence-free vector field on $\mathbb{R}^d$.
  We call $\dot{\varphi}$ the \emph{material} representation of the velocity, while $\dot{\varphi} \circ \varphi^{-1} \in \mathfrak{X}_{\vol}(\mathbb{R}^d)$ is the spatial representation.
 The Lagrangian, $L: T( \Diff_{\vol}(\mathbb{R}^d) ) \to \mathbb{R}$, is the kinetic energy of the fluid,
  \[
  	L( \varphi, \dot{\varphi}) := \frac{1}{2} \int_{\mathbb{R}^d}{ \| \dot{\varphi}(x) \|^2 dx}.
  \]
  One can derive the Euler-Lagrange equations on $\Diff_{\vol}(\mathbb{R}^d)$ with respect to the Lagrangian $L$ to obtain the equations of motion for an ideal fluid.  However, this Lagrangian exhibits a symmetry.
  \begin{prop}[\cite{Arnold1966}]
  	The Lagrangian $L$ is symmetric with respect to the right action $\Diff_{\vol}(\mathbb{R}^d)$ on $T \Diff_{\vol}(\mathbb{R}^d)$.
\end{prop}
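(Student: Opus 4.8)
The plan is to show that the kinetic-energy Lagrangian $L(\varphi,\dot\varphi) = \tfrac12\int_{\mathbb{R}^d}\|\dot\varphi(x)\|^2\,dx$ is unchanged when we act on the right by an arbitrary $\psi\in\Diff_{\vol}(\mathbb{R}^d)$. First I would make the right action on $T\Diff_{\vol}(\mathbb{R}^d)$ explicit. For a point $\varphi$ and tangent vector $\dot\varphi$, the right translation by $\psi$ sends $\varphi\mapsto\varphi\circ\psi$, and its tangent lift sends $\dot\varphi\mapsto\dot\varphi\circ\psi$. Concretely, if $\varphi_t$ is a curve representing $\dot\varphi$ at $t=0$, then $\varphi_t\circ\psi$ is a curve through $\varphi\circ\psi$ whose time derivative at $t=0$ is the map $x\mapsto\dot\varphi(\psi(x))$, so the lifted action is precomposition with $\psi$ in the base variable. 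This is the only genuine conceptual point; once the action is pinned down, the rest is a change-of-variables computation.

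Next I would substitute into the Lagrangian and compute
\[
  L(\varphi\circ\psi,\dot\varphi\circ\psi) = \frac{1}{2}\int_{\mathbb{R}^d}\|\dot\varphi(\psi(x))\|^2\,dx.
\]
Performing the substitution $y=\psi(x)$, the Jacobian factor is $|\det D\psi(x)|$, and the identity $dy = |\det D\psi|\,dx$ converts the integral over $x$ into an integral over $y$. The key step is to invoke that $\psi$ is volume-preserving, i.e. $\psi^*(dx)=dx$, equivalently $|\det D\psi|\equiv 1$ pointwise; this makes the change of variables pull through with trivial Jacobian, yielding
\[
  L(\varphi\circ\psi,\dot\varphi\circ\psi) = \frac{1}{2}\int_{\mathbb{R}^d}\|\dot\varphi(y)\|^2\,dy = L(\varphi,\dot\varphi).
\]
Hence $L$ is right-invariant.

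The main obstacle here is not analytic difficulty but bookkeeping: one must verify that the lifted right action is genuinely precomposition and not, say, an action that also transforms the fiber by $D\psi$. Because the velocity $\dot\varphi$ is interpreted as the map $x\mapsto\frac{d}{dt}\varphi_t(x)$ taking values in $T\mathbb{R}^d$, composing the whole curve with $\psi$ only relabels the argument $x$ and does \emph{not} differentiate $\psi$ — the output vector $\dot\varphi(\psi(x))\in T_{\varphi(\psi(x))}\mathbb{R}^d$ is already a tangent vector at the correct footpoint, so no extra Jacobian appears in the integrand itself. The integrand's pointwise norm is therefore simply reindexed, and the only place the group structure enters is the volume form in $dx$. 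I would emphasize that this invariance is exactly the particle-relabeling symmetry: $\psi$ reshuffles the reference labels of fluid particles without changing their instantaneous spatial velocities, so the total kinetic energy, being a spatial integral, cannot see the relabeling. This geometric remark doubles as a sanity check on the formal computation and explains why volume-preservation is the precise hypothesis needed.
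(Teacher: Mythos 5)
Your proof is correct and is exactly the standard argument: the paper itself gives no proof (it defers to \cite{Arnold1966}), and the cited argument is precisely your computation — the tangent-lifted right action is precomposition $(\varphi,\dot\varphi)\mapsto(\varphi\circ\psi,\dot\varphi\circ\psi)$ with no Jacobian acting on the fiber, and the change of variables $y=\psi(x)$ with $|\det D\psi|\equiv 1$ gives invariance of the kinetic-energy integral. Your remark distinguishing right translation (relabeling, no $D\psi$ factor) from an action that would transform the fiber is the right conceptual point to flag.
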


Moreover, it is simple to verify the the proposition:

\begin{prop} \label{prop:MC}
	The action of $\Diff_{\vol}( \mathbb{R}^d )$ on $T \Diff_{\vol}( \mathbb{R}^d)$ is free and proper.
	The quotient space $T \Diff_{\vol}(\mathbb{R}^d) / \Diff_{\vol}( \mathbb{R}^d) = \mathfrak{X}_{\rm div}(\mathbb{R}^d)$ and the quotient projection is the right Maurer-Cartan form,
	\[
		\rho: (\varphi, \dot{\varphi}) \in T \Diff( \mathbb{R}^d) \mapsto u = \dot{\varphi} \circ \varphi^{-1} \in \mathfrak{X}_{\rm div}( \mathbb{R}^d).
	\]
\end{prop}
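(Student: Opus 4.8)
The plan is to treat the right action as the tangent lift of right translation on the group and then reduce every claim to a statement about right translation on $\Diff_{\vol}(\mathbb{R}^d)$ itself. First I would record the action explicitly: for $\psi \in \Diff_{\vol}(\mathbb{R}^d)$, right translation is $R_\psi(\varphi) = \varphi \circ \psi$, and differentiating a curve $\varphi_t$ with $\frac{d}{dt}\big|_{0} \varphi_t = \dot{\varphi}$ shows that its tangent lift sends $(\varphi,\dot{\varphi})$ to $(\varphi \circ \psi, \dot{\varphi} \circ \psi)$. This is the action whose properties must be established.

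For freeness, I would observe that if $(\varphi \circ \psi, \dot{\varphi}\circ \psi) = (\varphi, \dot{\varphi})$, then already the base components satisfy $\varphi \circ \psi = \varphi$, and composing with $\varphi^{-1}$ forces $\psi = \mathrm{id}$; thus freeness on $T\Diff_{\vol}(\mathbb{R}^d)$ follows immediately from freeness of right translation on the group. For properness, the model argument is finite-dimensional and elementary: right translation of a Lie group on itself is proper because the map $(\varphi,\psi) \mapsto (\varphi \circ \psi, \varphi)$ is a diffeomorphism of $\Diff_{\vol}(\mathbb{R}^d) \times \Diff_{\vol}(\mathbb{R}^d)$ with smooth inverse $(\alpha,\beta) \mapsto (\beta, \beta^{-1}\circ \alpha)$. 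Since the tangent-lifted action covers this base action along the equivariant projection $\tau: T\Diff_{\vol}(\mathbb{R}^d) \to \Diff_{\vol}(\mathbb{R}^d)$, and the fiber coordinate $\dot{\varphi}$ is pinned down once the base data are fixed, properness lifts from the base to the tangent bundle.

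To identify the quotient, I would show that $\rho(\varphi,\dot{\varphi}) = \dot{\varphi} \circ \varphi^{-1}$ realizes the orbit space. Invariance is a one-line computation: $\rho(\varphi \circ \psi, \dot{\varphi}\circ\psi) = (\dot{\varphi}\circ\psi)\circ(\varphi\circ\psi)^{-1} = \dot{\varphi}\circ\varphi^{-1}$. Surjectivity onto $\mathfrak{X}_{\rm div}(\mathbb{R}^d)$ follows since $\rho(\mathrm{id},u) = u$ for any $u \in T_{\mathrm{id}}\Diff_{\vol}(\mathbb{R}^d) = \mathfrak{X}_{\rm div}(\mathbb{R}^d)$. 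The essential point is that the fibers of $\rho$ are exactly the orbits: if $\dot{\varphi}_1 \circ \varphi_1^{-1} = \dot{\varphi}_2 \circ \varphi_2^{-1}$, then setting $\psi = \varphi_1^{-1}\circ\varphi_2$ gives $\varphi_1 \circ \psi = \varphi_2$ and $\dot{\varphi}_1 \circ \psi = \dot{\varphi}_2$, so the two points lie on a common orbit. Hence $\rho$ descends to a bijection $T\Diff_{\vol}(\mathbb{R}^d)/\Diff_{\vol}(\mathbb{R}^d) \cong \mathfrak{X}_{\rm div}(\mathbb{R}^d)$, which is precisely the asserted right Maurer-Cartan (right-trivialization) identification.

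I expect the genuine obstacle to be properness rather than any of the algebra. The diffeomorphism argument above is clean in finite dimensions, but $\Diff_{\vol}(\mathbb{R}^d)$ is an infinite-dimensional Fr\'echet Lie group on the noncompact manifold $\mathbb{R}^d$, where compactness-based characterizations of properness are delicate and where the decay conditions in the paper's footnote are exactly what make the relevant inverses and Hodge-type decompositions behave. Consistent with the stated convention of interpreting constructions formally, I would carry out the verification at the level of the group structure and flag the functional-analytic justification of properness (and of the smooth structure on the quotient) as the step requiring the cited analytic hypotheses.
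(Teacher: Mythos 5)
Your proof is correct, but there is nothing in the paper to compare it against: the author introduces Proposition \ref{prop:MC} with the phrase ``it is simple to verify'' and supplies no proof at all, so your write-up fills in exactly the verification the paper omits. Your argument is the standard right-trivialization one --- freeness read off from the base action, properness via the homeomorphism $(\varphi,\psi) \mapsto (\varphi \circ \psi, \varphi)$ lifted to the tangent bundle, and the identification of the fibers of $\rho(\varphi,\dot{\varphi}) = \dot{\varphi} \circ \varphi^{-1}$ with the group orbits --- and each step checks out; note also that your orbit computation implicitly handles why $\rho$ lands in $\mathfrak{X}_{\rm div}(\mathbb{R}^d)$, since every $(\varphi,\dot{\varphi})$ lies on the orbit of $(\mathrm{id}, \dot{\varphi}\circ\varphi^{-1})$ and tangent vectors at the identity are divergence-free, a fact the paper itself asserts just before the proposition. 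Your closing caveat is also the right one: in the Fr\'echet setting, compactness-based properness and the smooth structure on the quotient are the genuinely delicate points, and the paper's own conventions (``all algebraic manipulations will be interpreted formally,'' together with the footnote invoking decay conditions and the Hodge decomposition) are what license treating them formally, so flagging properness as the step that leans on the cited analytic hypotheses is precisely the correct division of labor.
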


   This symmetry is referred to as the \emph{particle relabeling symmetry}.
   As a result of this symmetry, Proposition \ref{prop:reduced_vf} suggests that we can write equations of motion on $\mathfrak{X}_{\rm div}( \mathbb{R}^d )$.
   It was the discovery of \cite{Arnold1966} that these equations could be written as
  \[
  	\partial_t u + u \cdot \nabla u = - \nabla p \quad , \quad \mathrm{div}(u) = 0,
  \]
  which one will recognize as the inviscid fluid equations.   Moreover, if we define the linear map, $f_\mu: \mathfrak{X}_{\vol}(\mathbb{R}^d) \to \mathfrak{X}_{\vol}^*(\mathbb{R}^d)$ given by
  \[
  	\lb f_\mu(u) , w \rb = \mu \int_{\mathbb{R}^d }{  \Delta u(x)  \cdot w(x)  dx},
  \]
  then we derive the Lagrange-D'Alembert equations by lifting $f_\mu$ (via the right Maurer-Cartan form) to obtain a force field $F: T (\Diff_{\vol}(\mathbb{R}^3)) \to T^{\ast}( \Diff_{\vol}(\mathbb{R}^3))$.
  If we do this, then reduction by $\Diff_{\vol}(\mathbb{R}^3)$ yields a spatial velocity field, $u(t)$, which satisfies the Navier-Stokes equations
  \[
  	\partial_t u + u \cdot \nabla u = - \nabla p - \mu \Delta u \quad , \quad \mathrm{div}(u) = 0.
  \]

\subsection{Solids} \label{sec:passive solids}
  Let $\body$ be a compact manifold with boundary $\partial \body$ and volume form $d\vol_{\body}$.  Let $\Emb(\body)$ denote the set of embeddings of $\body$ into $\mathbb{R}^d$. Finally, let $\SE(d)$ denote the set of isometries of $\mathbb{R}^d$.
    
  We view each $b \in \Emb(\body)$ as a map $b:\body \hookrightarrow \mathbb{R}^d$, while viewing $z \in \SE(d)$ as a map $z : \mathbb{R}^d \to \mathbb{R}^d$.
  We can compose these maps to obtain a new map $z \circ b : \body \hookrightarrow \mathbb{R}^d$, which itself embeds $\body$ into $\mathbb{R}^d$.
  That is to say, the assignment $b \mapsto z \circ b$ is a left action of $\SE(d)$ on $\Emb(\body)$.
  It is elementary to observe that this action is free and proper, and makes $\Emb(\body)$ into an $\SE(d)$-principal bundle.
  The configuration manifold for the body is given by a $\SE(d)$-invariant submanifold $B \subset \Emb(\body)$ (possibly finite-dimensional).
  Therefore, the quotient space $[B] = \frac{B}{\SE(d)}$ is a smooth manifold and $\pi^{B}_{[B]}: B \to [B]$ is a $\SE(d)$-principal bundle as well.
  We call $[B]$ the \emph{shape-space}, following \cite{MaMoRa1990}.
  
  The Lagrangian for the body, $L_B: TB \to \mathbb{R}$, will be that of a simple mechanical system.
  The reduced-potential energy will be given by a function $[U] : [B] \to \mathbb{R}$,
  and the potential energy is defined as $U := [U] \circ \pi^B_{[B]}$.
  Equivalently, we may define $U:B \to \mathbb{R}$ first, with the assumption that we choose something which is $\SE(d)$-invariant.

  To define the kinetic energy, we must first understand the tangent bundle $TB \subset T \Emb(\body)$.
 By applying the dynamic definition of tangent vectors, we can derive that a $(b,\dot{b}) \in T \Emb(\body)$ must be a pair of maps, $b \in \Emb(\body)$ and $\dot{b} : \body \hookrightarrow T \mathbb{R}^d$, such that $\dot{b}(x)$ is a vector over $b(x)$ for all $x \in \body$.
  Moreover, a $(b,\dot{b}) \in TB$ is an element of $T \Emb(\body)$ tangential to $B \subset \Emb(\body)$.
  We see that for each $z \in \SE(d)$, we can consider the map $Tz : T \mathbb{R}^d \to T \mathbb{R}^d$, and we define the action of $z$ on $TB$ by the assignment $(b,\dot{b}) \in TB \mapsto (z \circ b , Tz \circ \dot{b}) \in TB$.
  This defines a free and proper left $\SE(d)$ action on $TB$ so that $TB$ is an $\SE(d)$-principal bundle.
  We will assume the existence of an $\SE(d)$-invariant Riemannian metric $\lb \cdot , \cdot \rb_{\body} : TB \oplus TB \to \mathbb{R}$,
  and that the kinetic energy is $K(b,\dot{b}) = \frac{1}{2} \langle (b,\dot{b}) , (b,\dot{b}) \rangle_{B}$.
  
  Finally, without any dissipation, our solid body could ``jiggle'' forever due to conservation of energy.  To amend this, we will include a dissipative force given by a fiber-bundle map, $F_{\body}: T [ B] \to T^{\ast} [B]$, such that the storage function
\begin{align}
	\lb F_{\body}( v_{[b]} ) , v_{[b]} \rb : T[ B] \to \mathbb{R} \label{eq:concavity}
\end{align}
is convex on each fiber of $T[B]$ and reaches a maximum at zero where it vanishes.
For example, a negative definite quadratic form would be admissible.
Such a force has the effect of dampening the rate of change in the shape of the body, but it will not dampen motions induced by the action of $\SE(d)$.  In other words, we assume that a jiggling body eventually comes to rest with some shape $s_{\min} \in [B]$ by the dissipation of energy.

  \begin{example} \label{ex:two_link}
 Consider a two-link body in $\mathbb{R}^2$.
  The configuration manifold $B$ consists of rigid embeddings of the two links into $\mathbb{R}^2$ such that the embeddings respect the constraint that the links are joined at the hinge (see Figure \ref{fig:two_link}).
  In particular, $B$ is isomorphic to $S^1 \times S^1 \times \mathbb{R}^2$ if we let the tuple $(\phi_1,\phi_2,x,y ) \in S^1 \times S^1 \times \mathbb{R}^2$ denote a configuration where $\phi_1,\phi_2 \in S^1$ are the angles between the links and the $x$-axis, while $(x,y) \in \mathbb{R}^2$ is the location of the hinge. Under this identification, the action of an element $(\theta, X,Y) \in \SE(2)$ on $(\phi_1,\phi_2,x,y) \in B$ is given by 
  \[
  	(\theta , X , Y) \cdot \begin{pmatrix} \phi_1 \\ \phi_2 \\ x \\ y \end{pmatrix} = \begin{pmatrix} \theta + \phi_1 \\ \theta+ \phi_2 \\ \cos(\theta) x - \sin(\theta)y + X \\ \sin(\theta)x + \cos(\theta) y + Y \end{pmatrix}.
  \]
  Under this action, we find that the shape space is $[B] = S^1$ and that the quotient projection from $B$ to $[B]$ is given by $\pi^B_{[B]}( \phi_1, \phi_2,x,y) = \phi_1 - \phi_2$.
  In other words, the shape of the body is described by the interior angle of the hinge.
  Finally, we may consider a potential energy derived from a linear spring between the hinges given by $U( \phi_1, \phi_2, x,y) = \frac{k}{2} ( \phi_1 - \phi_2 - \bar{\theta} )^{2}$ for some constant equilibrium interior angle $\bar{\theta} \in S^1$.
  It should be evident that this potential energy is $\SE(2)$-invariant.  The kinetic energy of the $i^{\rm th}$ body is
  \[
  	K_i = \frac{I_i}{2} \dot{\phi}_i^2 + \frac{M_i}{2} \left( [\dot{x} - \sin(\phi_i) \dot{\phi_i}]^2 + [\dot{y} + \cos(\phi_i) \dot{\phi_i} ]^2 \right),
  \]
  where $M_i$ and $I_i$ are the mass and rotational inertial of the $i^{\rm th}$ body, respectively.  The Lagrangian is therefore $L_B = K_1 + K_2 - U$.
  Lastly, the force $F_B = \dot{\phi}_2 d\phi_2 - \dot{\phi}_1 d \phi_1$ provides an $\SE(2)$-invariant elastic friction force.
  The effect of $F_B$ is to dampen changes in the the interior angle $\theta = \phi_2 - \phi_1$.
  In particular, $\theta$ parametrizes the shape space of this body, and so $F_B$ can be said to dampen changes in shape.
\end{example}

\begin{figure}[h]
   \centering
   \includegraphics[width=3in]{./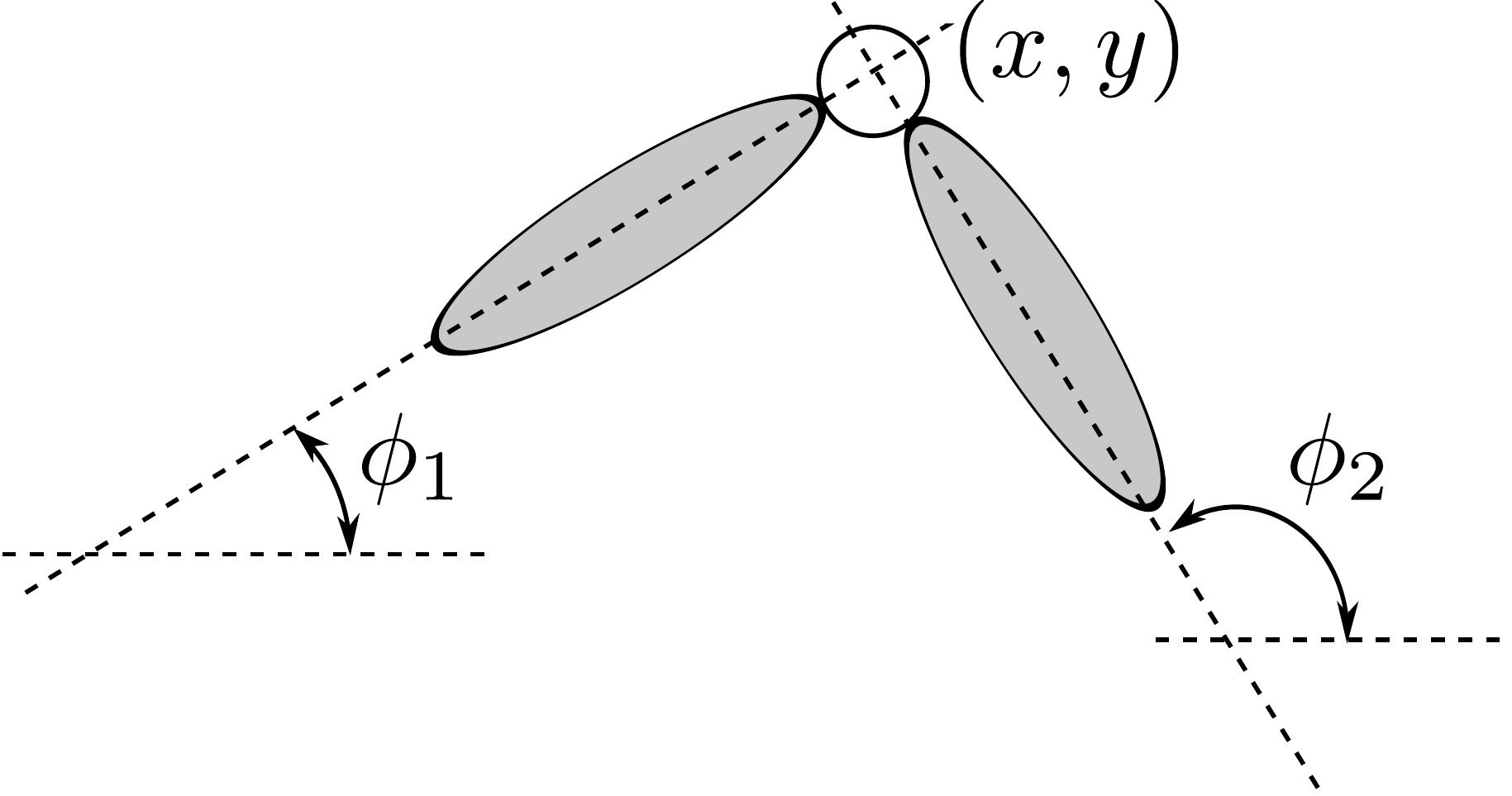} 
   \caption{A diagram of the swimmer from Example \ref{ex:two_link}.}
   \label{fig:two_link}
\end{figure}

  \begin{example}
  The theory of linear elasticity assumes $\body$ to be a Riemannian manifold with a mass density $\rho \in \bigwedge^n (\body)$ and metric $\langle \cdot , \cdot \rangle_{\body}: T\body \oplus T\body \to \mathbb{R}$.
  Here, the configuration manifold is $B = \Emb(\body)$ and the potential energy is
  \[
  	U(b) = \frac{1}{2} \int_{b(\body)}{ \mathrm{trace}\left( [I - C_b]^T \cdot [I - C_b] \right) b_*d\vol_{\body}},
  \]
  where $C_b$ is the push-forward of the metric $\lb \cdot , \cdot \rb_{\body}$ by $b:\body \hookrightarrow \mathbb{R}^d$, a.k.a. the \emph{right Cauchy-Green strain tensor}.
  The $\SE(d)$-invariant kinetic energy, $K:TB \to \mathbb{R}$, is given by
  \[
  	K(b,\dot{b}) = \frac{1}{2} \int_{\body} \| \dot{b}(x) \|^2 \rho(x) dx.
  \]
  This Lagrangian yields the standard model of linear elasticity and is known to be $\SE(d)$-invariant, a.k.a. \emph{objective} \cite{MFOE}.
  As we can not easily coordinatize $B$ in this example, we cannot expect to easily obtain a concrete description of the shape space, $[B]$.
  Nonetheless, by the $\SE(d)$-invariance of $U$, there must exist a function $[U] : [B] \to \mathbb{R}$ such that $U = [U] \circ \pi^B_{[B]}$.
  \end{example}
  The above examples are merely instances of possible models we may choose for the body.
  Identifying a physical model of the solid body is a necessary precondition for understanding the effect of internal body forces on the system.
  In particular, this is the approach taken in \cite{Tytell2010} and \cite{Bhalla2013}.
  To quote \cite{Tytell2010}, ``the motion of the body emerges as a balance between internal muscular force and external fluid forces.''
  The emphasis on the importance of the internal mechanics of the swimming body can become fairly sophisticated.
  These sophisticated solid-mechanical concerns can be important for understanding the role of passive mechanisms in biomechanics.
  For example, fibered structures can exhibit a ``counter-bend phenomena'' in which an increased curvature in one region of a structure yields a decrease elsewhere in ways which aide swimming \cite{Gadelha2013}.
  These advanced topics will not be addressed here, but we recall them only to put this work in a proper context.

\subsection{Fluid-solid interaction} \label{sec:passive fsi}
  Let $\body$, $B$, $L_{\body}$, $F_{\body}$ be as described in the previous section.  Given an embedding $b \in B$, let $\bulk_{b}$ denote the set
  \[
  	\bulk_{b} = \mathrm{closure} \left\{ \mathbb{R}^d \backslash b\left( \body \right) \right\}.
  \]
  The set $\bulk_{b}$ is the region which will be occupied by the fluid given the embedding of the body $b$.  If the body configuration is given by $b_0 \in B$ at time $0$ and $b \in B$ at time $t$, then the configuration of the fluid is given by a volume-preserving diffeomorphism from $\bulk_{b_0}$ to $\bulk_{b}$, i.e. an element of $\Diff_{\vol}\left( \bulk_{b_0}, \bulk_{b} \right)$.  Given a reference configuration $b_0 \in B$ for the body, we define the configuration manifold as
  \begin{align*}
  	Q := \{ (b , \varphi) \quad \vert \quad & b \in B, \varphi \in \Diff_{\vol}\left( \bulk_{b_0}, \bulk_{b} \right) \}.
  \end{align*}
  One should note that the manifold $Q$ has some extra structure.  In particular, the Lie group $G := \Diff_{\vol}( \bulk_{b_0} )$ represents the symmetry group for the set of particle labels, and acts on $Q$ on the right by sending
  \[
  	(b,\varphi) \in Q \mapsto (b,\varphi \circ \psi) \in Q
  \]
  for each $\psi \in G$ and $(b,\varphi) \in Q$.
  Given this action, the following proposition is self-evident.


  \begin{prop} \label{prop:Q}
  	The projection $\pi^Q_B :Q \to B$ defined by $\pi^{Q}_{B}(b,\varphi) = b$ makes $Q$ into a principal $G$-bundle over $B$.
  \end{prop}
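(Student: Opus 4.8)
The plan is to reduce the claim to the general principle, recalled in Section \ref{sec:RLC} and cited there as \cite[Proposition 4.1.23]{FOM}, that a free and proper (right) $G$-action yields a principal $G$-bundle whose base is the orbit space. Thus it suffices to verify three things: that the $G$-action on $Q$ is free, that its orbits coincide with the fibers of $\pi^Q_B$ (so that $Q/G \cong B$ via $\pi^Q_B$), and that the action is proper. Smooth local triviality then follows from the cited result.

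Freeness is immediate by cancellation: if $(b, \varphi \circ \psi) = (b, \varphi)$, then $\varphi \circ \psi = \varphi$, and precomposing with $\varphi^{-1}$ forces $\psi = \mathrm{id}$. For the orbit structure, I would observe that the fiber $\left( \pi^Q_B \right)^{-1}(b) = \{ b \} \times \Diff_{\vol}\left( \bulk_{b_0}, \bulk_{b} \right)$ is acted on transitively by $G$: given $\varphi, \varphi' \in \Diff_{\vol}\left( \bulk_{b_0}, \bulk_{b} \right)$, the diffeomorphism $\psi := \varphi^{-1} \circ \varphi'$ carries $\bulk_{b_0}$ to itself, preserves volume, and satisfies $\varphi' = \varphi \circ \psi$; hence $\psi \in G$ and $(b,\varphi') = (b,\varphi) \cdot \psi$. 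Combined with freeness, this exhibits each fiber as a $G$-torsor, so the $G$-orbits are exactly the fibers of $\pi^Q_B$ and the orbit space is canonically identified with $B$. Properness I would handle as in Proposition \ref{prop:MC}, since the action is again right composition in a volume-preserving diffeomorphism group, here restricted fiberwise.

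The one step carrying genuine analytic content — and the one I expect to be the main obstacle — is smooth local triviality, equivalently the existence of smooth local sections $b \mapsto \varphi_b \in \Diff_{\vol}\left( \bulk_{b_0}, \bulk_{b} \right)$ over neighborhoods $\mathcal{U} \subset B$. This requires both that each fiber be nonempty, i.e. that $\bulk_{b_0}$ and $\bulk_{b}$ genuinely admit a volume-preserving diffeomorphism between them, and that such diffeomorphisms can be chosen to depend smoothly on $b$. Since $\bulk_{b_0}$ and $\bulk_{b}$ are complements of isotopic embedded copies of $\body$, they are diffeomorphic, and a Moser-type interpolation argument upgrades a smoothly varying family of diffeomorphisms to volume-preserving ones; a local section $b \mapsto \varphi_b$ then furnishes the trivialization $(b, \psi) \mapsto (b, \varphi_b \circ \psi)$ of $\left( \pi^Q_B \right)^{-1}(\mathcal{U}) \cong \mathcal{U} \times G$. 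In keeping with the conventions of the introduction, the infinite-dimensional functional-analytic details (the tame Fr\'echet structure and the regularity of the Moser flow) are to be treated formally, which is precisely why the proposition can reasonably be called self-evident.
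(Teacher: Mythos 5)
The paper contains no proof of this proposition at all: it is introduced with the words ``Given this action, the following proposition is self-evident,'' the only implicit justification being the general fact recalled in Section \ref{sec:RLC} that free and proper actions yield principal bundles (\cite[Proposition 4.1.23]{FOM}). Your proposal is therefore correct at the paper's declared level of rigor and supplies strictly more than the paper does. Your verification of freeness and of fiber-transitivity (hence $Q/G \cong B$ via $\pi^Q_B$) is exactly the content the paper treats as obvious, and deferring properness to the analogue of Proposition \ref{prop:MC} mirrors the paper's own practice, since that proposition too is only asserted to be ``simple to verify.'' The most valuable part of your write-up is the closing observation: on a Fr\'echet manifold such as $Q$, freeness and properness do not by themselves deliver smooth local triviality, so one must separately check that the fibers are nonempty and that smooth local sections $b \mapsto \varphi_b$ exist, which is where the isotopy-extension and Moser-type arguments enter; this is precisely the analytic content that the paper's conventions (all manipulations formal, everything assumed smooth) silently absorb. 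One caveat worth adding: your isotopy argument produces a volume-preserving diffeomorphism $\bulk_{b_0} \to \bulk_{b}$ only for $b$ in the connected component of $b_0$ in $B$, so over other components the set $\Diff_{\vol}\left( \bulk_{b_0}, \bulk_{b} \right)$ could a priori be empty; strictly speaking one should restrict $B$ to that component, or make nonemptiness of the fibers a standing assumption, for the bundle statement to hold over all of $B$.
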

  
  Now we must define the Lagrangian.  To do this, it is useful to note that the system should be invariant with respect to particle relablings of the fluid, and so the Lagrangian should be invariant with respect to the right action of $G$ on $TQ$ given by
  \[
  	(b,\dot{b},\varphi,\dot{\varphi}) \in TQ \mapsto (b,\dot{b}, \varphi \circ \psi , \dot{\varphi} \circ \psi ) \in TQ
\]
 for each $\psi \in G$.  As a result, we can define a Lagrangian on the quotient space $TQ / G$.  Incidentally, this quotient space is much closer to the space typically encountered in fluid-structure interaction.

\begin{prop}[Proposition 2.2 of \cite{JaVa2013}] \label{prop:TQ/G}
  The quotient space $TQ / G$ can be identified with the set
 \begin{align}
 	P := \{ (b,\dot{b} , u) \quad \vert \quad & (b,\dot{b}) \in T B , \nonumber \\
	& u \in \mathfrak{X}_{\rm div}( {\bulk_{b}} ) , \nonumber \\
	& u(b(x)) = \dot{b}(x) , \forall x \in \partial \body \}.
 \end{align}
  Under this identification, the quotient map $\pi^{TQ}_{/G}: TQ \to P$ is given by $\pi_{/G}(b,\dot{b},\varphi,\dot{\varphi}) = (b,\dot{b}, \dot{\varphi} \circ \varphi^{-1})$.
 Moreover, $P$ is naturally equipped with the bundle projection $\tau( b,\dot{b},u) = b$ and the vector bundle structure $(b,\dot{b}_1,u_1) + (b,\dot{b}_2, u_2) = (b, \dot{b}_1 + \dot{b}_2, u_1+u_2)$,
 for all $(b,\dot{b}_1,u_1),(b,\dot{b}_2,u_2) \in \tau^{-1}(b)$ and all $ b \in B$.
\end{prop}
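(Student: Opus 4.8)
The plan is to carry out the reduction fiberwise over $B$, reducing to the particle-relabeling computation already recorded in Proposition~\ref{prop:MC}. The point is that $G = \Diff_{\vol}(\bulk_{b_0})$ acts only on the diffeomorphism slot of $(b,\dot b,\varphi,\dot\varphi)$, fixing $(b,\dot b)\in TB$ pointwise, so the quotient map must factor through $(b,\dot b)$ and reduce only the pair $(\varphi,\dot\varphi)$. Accordingly I would propose the map $\pi_{/G}(b,\dot b,\varphi,\dot\varphi) = (b,\dot b, u)$ with $u := \dot\varphi\circ\varphi^{-1}$, which is exactly the right Maurer--Cartan form of Proposition~\ref{prop:MC} applied on the moving domain $\bulk_b$, and then verify that it induces the claimed bijection $TQ/G \cong P$.

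First I would check that $\pi_{/G}$ lands in $P$. Since $\dot\varphi(x)$ is a vector based at $\varphi(x)$, the composite $u = \dot\varphi\circ\varphi^{-1}$ sends $y\in\bulk_b$ to a vector based at $y$, so $u$ is a genuine vector field on $\bulk_b$. That $u$ is divergence-free follows from the same computation as in Proposition~\ref{prop:MC}: differentiating the volume-preservation identity $\varphi_t^* dx = dx$ gives $\varphi^*(\mathrm{div}(u)\,dx) = 0$, hence $\mathrm{div}(u)=0$. The boundary condition $u(b(x)) = \dot b(x)$ for $x\in\partial\body$ comes from differentiating the constraint that a curve $(b_t,\varphi_t)$ in $Q$ has $\varphi_t$ mapping $\bulk_{b_0}$ onto $\bulk_{b_t}$ while agreeing with the body on the contact boundary, i.e. $\varphi_t\circ b_0 = b_t$ on $\partial\body$; differentiating at $t=0$ yields $\dot\varphi\circ b_0 = \dot b$ there, and precomposing $u$ with the relation $\varphi(b_0(x)) = b(x)$ gives $u(b(x)) = \dot b(x)$. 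Invariance under $G$ is then immediate: for $\psi\in G$,
\[
  \pi_{/G}(b,\dot b,\varphi\circ\psi,\dot\varphi\circ\psi) = (b,\dot b,(\dot\varphi\circ\psi)\circ(\varphi\circ\psi)^{-1}) = (b,\dot b,\dot\varphi\circ\varphi^{-1}),
\]
so $\pi_{/G}$ descends to $TQ/G$.

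Next I would establish that the descended map is a bijection onto $P$. For surjectivity, given $(b,\dot b,u)\in P$ I would choose any $\varphi\in\Diff_{\vol}(\bulk_{b_0},\bulk_b)$ (which is nonempty under our standing assumptions) and set $\dot\varphi := u\circ\varphi$; the divergence and boundary conditions on $u$ then guarantee $(b,\dot b,\varphi,\dot\varphi)\in TQ$ with $\pi_{/G}$ returning $(b,\dot b,u)$. For injectivity on orbits, if two points share the same image then they agree in $(b,\dot b)$ and in $u$, and setting $\psi := \varphi^{-1}\circ\varphi'\in G$ gives $\varphi' = \varphi\circ\psi$ and $\dot\varphi' = u\circ\varphi' = \dot\varphi\circ\psi$, exhibiting them in a common $G$-orbit. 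Granting the (formal, infinite-dimensional) smoothness of the quotient, this makes $TQ/G\to P$ a diffeomorphism. The stated vector-bundle structure is then a routine check: the conditions cutting out the fiber $\tau^{-1}(b)$ — that $u$ be divergence-free and satisfy $u\circ b = \dot b$ on $\partial\body$ — are linear in $(\dot b,u)$, so the fiberwise sum $(b,\dot b_1,u_1)+(b,\dot b_2,u_2) = (b,\dot b_1+\dot b_2,u_1+u_2)$ is well defined and inherits its vector-space structure from $TB$ and $\mathfrak{X}_{\rm div}(\bulk_b)$.

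The main obstacle I anticipate is the careful computation of $TQ$ and, within it, the correct derivation of the boundary condition. Because $\pi^Q_B: Q\to B$ is not a product but a bundle whose fiber $\Diff_{\vol}(\bulk_{b_0},\bulk_b)$ varies with $b$, the tangent vector $\dot\varphi$ is a map $\bulk_{b_0}\to T\mathbb{R}^d$ whose admissibility is constrained by how the target domain $\bulk_{b_t}$ moves; it is precisely this $b$-dependence, rather than any purely formal manipulation, that produces the matching condition $u(b(x)) = \dot b(x)$ and distinguishes $P$ from the naive product of $TB$ with the divergence-free fields. The remaining analytic subtleties — nonemptiness of $\Diff_{\vol}(\bulk_{b_0},\bulk_b)$ and smoothness of the infinite-dimensional quotient — I would treat formally, in keeping with the conventions of the paper.
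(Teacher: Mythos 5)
Your proposal is correct and takes essentially the same route as the paper's proof: both descend the map $(b,\dot{b},\varphi,\dot{\varphi}) \mapsto (b,\dot{b},\dot{\varphi}\circ\varphi^{-1})$ to the quotient by checking $G$-invariance, identifying each preimage in $TQ$ as exactly one $G$-coset, and then transferring the fiberwise linear structure. You are in fact somewhat more thorough than the paper, which asserts surjectivity and the no-slip boundary condition implicitly rather than deriving them (via the constraint $\varphi_t \circ b_0 = b_t$ on $\partial\body$ and the explicit construction $\dot{\varphi} := u\circ\varphi$) as you do.
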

\begin{proof}
 Observe that $\pi^{TQ}_{/G}( v \circ \psi) = \pi^{TQ}_{ / G}( v )$ for all $\psi \in G$ and $v \in TQ$.  Therefore, $\pi^{TQ}_{/G}$ maps the coset $v \cdot G$ to a single element of $P$.
 Conversely, given an element $(b,\dot{b}, u) \in P$, we see that $(\pi^{TQ}_{/G})^{-1}(b,\dot{b},u)$ is the set of element in $(b,\dot{b},\varphi , \dot{\varphi}) \in TQ$ such that $u = \dot{\varphi} \circ \varphi^{-1}$.
 However, this set of elements is just the coset $v \cdot G$, where $v$ is any element such that $\pi^{TQ}_{/G}(v) = (b,\dot{b},u)$.  Thus, $\pi^{TQ}_{/G}$ induces an isomorphism between $TQ/G$ and $P$.
 Additionally, we can check that $\pi^{TQ}_{/G}(v+w) = \pi^{TQ}_{/G}(v) + \pi^{TQ}_{/G}(w)$ and $\tau( \pi_{/G}( v ) ) = \tau_{Q}(v) \cdot G$.
 Therefore, the desired vector bundle structure is inherited by $P$ as well, and $\pi_{/G}$ becomes a vector bundle morphism.
 Finally, the map $\rho(b,\dot{b},u) = (b,\dot{b})$ is merely the map $T\pi_B^Q : TQ \to TB$ modulo the action of $G$.  That is to say, $\rho \circ \pi^{TQ}_{/G} = T \pi^Q_B$.  This equation makes $\rho$ well-defined because $\pi^Q_G$ is $G$-invariant.
\end{proof}

As a guide for the reader, we provide the following commutative diagram.
\[
	\begin{tikzcd}
		TQ \arrow{rr}{T\pi^{Q}_{/G} } \arrow{dr}{ \pi^{TQ}_{/G}} \arrow{dd}{\tau_Q} & & TB \arrow{dd}{\tau_B} \\
		& P \arrow{ur}{\rho} \arrow{dr}{\tau}& \\
		Q \arrow{rr}{\pi^Q_{/G}} & & B
	\end{tikzcd}
	\qquad \qquad
	\begin{tikzcd}
		(b,\dot{b},\varphi,\dot{\varphi}) \arrow[mapsto]{rr}{T\pi^Q_{/G} } \arrow[mapsto]{dr}{ \pi^{TQ}_{/G}} \arrow[mapsto]{dd}{\tau_Q} & & (b,\dot{b}) \arrow[mapsto]{dd}{\tau_B} \\
		& (b,\dot{b}, \dot{\varphi} \circ \varphi^{-1}) \arrow[mapsto]{ur}{\rho} \arrow{dr}{\tau}& \\
		(b,\varphi) \arrow[mapsto]{rr}{\pi^Q_{/G}} & & b
	\end{tikzcd}
\]

  Note that the fluid velocity component $u \in \mathfrak{X}_{\rm div}( \bulk_b )$ for a $(b,\dot{b},u) \in P$ may point in directions transverse to the boundary of the fluid domain $\bulk_{b}$.
  This reflects the fact that the boundary is time-dependent.  The condition $\dot{b}(x) = u(b(x))$ on the boundary states that the boundary of the body moves with the fluid, and is the mathematical description of the no-slip condition.

  We now define the reduced Lagrangian $\ell: P \to \mathbb{R}$ by
  \[
  	\ell (b,\dot{b},u) = L_{\body}(b,\dot{b}) + \frac{1}{2} \int_{\bulk_{b} }{ \| u(x) \|^2 dx }.
  \]
  This induces the standard Lagrangian
  \begin{align}
    L:= l \circ \pi^{TQ}_{/G}: TQ \to \mathbb{R} \label{eq:total_Lagrangian},
   \end{align} which is a simple mechanical Lagrangian consisting of the kinetic energy of the fluid minus the potential energy of the body described in \S \ref{sec:passive solids}.  Moreover, $L$ is $G$-invariant by construction.
  
  Additionally, we wish to add a viscous force on the fluid, $F_{\mu} : TQ \to T^{\ast}Q$. Given a coefficient of viscosity, $\mu$, we can define the reduced viscous friction force field $f_{\mu}: P \to P^{\ast}$ by
  \[
  	\lb f_{\mu}( b, v_b, u) , (b, w_b , w) \rb = \mu \int_{\bulk_{b}}{  \Delta u (x)  \cdot w(x)  d x },
  \]
  and define the unreduced force $F_{\mu}: TQ \to T^{\ast}Q$ by
  \[
  	\lb F_{\mu}( v ) , w \rb = \lb f_{\mu}( \pi_{/G}(v) ) , \pi_{/G}(w) \rb.
  \]
    We finally define the total force on our system to be
    \begin{align}
    	F = F_{\mu} + (F_\body \circ T \pi^Q_B), \label{eq:total_force}
   \end{align}
 where $F_\body$ is the dissipative force on the shape of the body mentioned in \S \ref{sec:passive solids}.  This total force $F$ descends via $\pi_{/G}$ to a reduced force $F_{/G} : P \to P^{\ast}$ where $P^*$ is the dual vector bundle to $P$.  The reduced force is given explicitly in terms of $f_\mu$ and $F_\body$ by $F_{/G} = f_\mu + (F_\body \circ \rho)$.  One can verify directly from this expression that $\lb F( v) , w \rb = \lb F_{/G}( \pi_{/G}(v) ) , \pi_{/G}(w) \rb$.
    
    We now introduce a consequence which follows from the $G$-invariance of $F$ and $L$.
  \begin{prop} \label{prop:evolution1}
    Let $X_{TQ} \in \mathfrak{X}(TQ)$ denote the Lagrange-d'Alembert vector field, and let $\Phi_t^{X_{TQ}} : TQ \to TQ$ denote the flow map associated with the Lagrangian $L:TQ \to \mathbb{R}$ and the force $F: TQ \to T^{\ast}Q$.  Then there exists a vector field $X_P \in \mathfrak{X}(P)$ and a flow map $\Phi_t^{X_P} : P \to P$  which are $\pi^{TQ}_{/G}$-related to $X_{TQ}$ and $\Phi_t^{X_{TQ}}$.
    \end{prop}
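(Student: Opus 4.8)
The plan is to exhibit $X_{TQ}$ as a $G$-invariant vector field on the manifold $M = TQ$, equipped with the tangent-lifted $G$-action, and then to invoke Proposition~\ref{prop:reduced_vf} essentially verbatim with $[M] = TQ/G$. Since Proposition~\ref{prop:TQ/G} identifies $TQ/G$ with $P$ through the quotient map $\pi^{TQ}_{/G}$, the reduced vector field $[X_{TQ}]$ furnished by Proposition~\ref{prop:reduced_vf} is exactly the desired $X_P$, and the $\pi^{TQ}_{/G}$-relatedness of the two flows is then read off from the second commuting square in the statement of that proposition. (Although $G$ acts on $Q$ on the right, this causes no difficulty: the proof of Proposition~\ref{prop:reduced_vf} is insensitive to the side of the action, and one may in any case replace the right action by the associated left action $g\cdot x := x\cdot g^{-1}$.)

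First I would fix the action under which invariance is to be checked. The group $G = \Diff_{\vol}(\bulk_{b_0})$ acts on $Q$ by $(b,\varphi)\mapsto(b,\varphi\circ\psi)$ (Proposition~\ref{prop:Q}), and this lifts to the action on $TQ$ recorded above, $(b,\dot b,\varphi,\dot\varphi)\mapsto(b,\dot b,\varphi\circ\psi,\dot\varphi\circ\psi)$. By Proposition~\ref{prop:TQ/G} this lifted action is free and proper with quotient $P$, so the hypotheses needed to apply Proposition~\ref{prop:reduced_vf} reduce to the single assertion that $X_{TQ}$ satisfies \eqref{eq:Gvf} for the action of $G$ on $T(TQ)$ obtained by lifting the action on $TQ$ once more.

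The heart of the matter is thus the $G$-invariance of $X_{TQ}$, which I would establish at the level of trajectories rather than in coordinates. Recall that $L = \ell\circ\pi^{TQ}_{/G}$ is $G$-invariant by construction (see \eqref{eq:total_Lagrangian}), and that the force $F$ is $G$-equivariant as a bundle map $TQ\to T^\ast Q$: indeed \eqref{eq:total_force} was arranged so that $\lb F(v),w\rb = \lb F_{/G}(\pi_{/G}(v)),\pi_{/G}(w)\rb$, while $\pi_{/G}$ is $G$-invariant. Consequently, for any curve $q(t)$ in $Q$ and any $\psi\in G$, both the action integral $\int_0^T L\,dt$ and the virtual-work pairing $\int_0^T \lb F,\delta q\rb\,dt$ are unchanged under $q(t)\mapsto q(t)\circ\psi$. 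Hence $q(t)$ satisfies the Lagrange-d'Alembert variational principle if and only if $q(t)\circ\psi$ does, so the flow obeys $\psi\circ\Phi^{X_{TQ}}_t = \Phi^{X_{TQ}}_t\circ\psi$. Differentiating this identity in $t$ at $t=0$ gives $T\psi\cdot X_{TQ}(v) = X_{TQ}(\psi\cdot v)$, which is precisely \eqref{eq:Gvf} with $g=\psi$. With invariance in hand, Proposition~\ref{prop:reduced_vf} applied to $M=TQ$, $\pi=\pi^{TQ}_{/G}$, $[M]=P$ produces the unique $X_P := [X_{TQ}]\in\mathfrak{X}(P)$ with $T\pi^{TQ}_{/G}\cdot X_{TQ} = X_P\circ\pi^{TQ}_{/G}$, together with a flow $\Phi^{X_P}_t$ that is $\pi^{TQ}_{/G}$-related to $\Phi^{X_{TQ}}_t$.

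The main obstacle I anticipate is not the reduction machinery, which is packaged cleanly in Proposition~\ref{prop:reduced_vf}, but the careful justification that the Lagrange-d'Alembert dynamics are genuinely equivariant in this infinite-dimensional setting: one must know that $X_{TQ}$ exists as an honest vector field (solvability of \eqref{eq:LDA} on $TQ$) and that differentiating the flow-equivariance identity is legitimate. Since the paper's conventions declare all such analytic points to be treated formally, I would present the equivariance of the \emph{solution set} as the substantive content and let the passage from equivariant flow to invariant vector field be carried out formally.
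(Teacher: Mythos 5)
Your proposal is correct and follows essentially the same route as the paper's own proof: both establish $G$-invariance of the Lagrange-d'Alembert dynamics via the variational principle (using the $G$-invariance of $L$ and the descent property $\lb F(v),w\rb = \lb F_{/G}(\pi_{/G}(v)),\pi_{/G}(w)\rb$ of the force), and then apply Proposition~\ref{prop:reduced_vf} together with the identification $TQ/G \cong P$ from Proposition~\ref{prop:TQ/G}. Your explicit remarks on the right-versus-left action and on differentiating the flow-equivariance identity are minor refinements of points the paper leaves implicit.
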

  
  \begin{proof}
    Let $q:[0,t] \to Q$ be a curve such that the time derivative $(q,\dot{q}): [0,t] \to TQ$ is an integral curve of the Lagrange-d'Alembert equations with initial condition $(q,\dot{q})(0)$ and final condition $(q,\dot{q})(t)$.  Then the Lagrange-d'Alembert variational principle states that
    \[
    	\delta \int_{0}^{t}{ L( (q,\dot{q})(\tau)) d\tau } = \int_{0}^{t}{ \lb F((q,\dot{q})(\tau) , \delta q(\tau) \rb d\tau}
    \]
    for all variations of the curve $q( \cdot)$ with fixed endpoints.  Note that for each $\psi \in G$, the action satisfies $\int_{0}^{t}{ L((q,\dot{q})(\tau) d\tau} = \int_{0}^{t}{ L( (q,\dot{q})(\tau) \circ \psi ) d\tau}$, and the variation on the right hand side of the Lagrange-d'Alembert principle is
   \begin{align*}
    	\int_{0}^{t}{ \lb F((q,\dot{q})(\tau)) , \delta q (\tau) \rb d\tau} &= \int_{0}^{t}{ \lb F_{/G}( \pi_{/G}( (q,\dot{q})(\tau) , \pi_{/G}( \delta q (\tau)) \rb d\tau} \\
	&= \int_0^t{ \lb F( (q,\dot{q})(\tau) \circ \psi ) , \delta q(\tau) \circ \psi \rb d\tau}.
   \end{align*}
   Therefore, we observe that
   \[
    	\delta \int_{0}^{t}{ L( (q,\dot{q})(\tau) \circ \psi) d\tau } = \int_{0}^{t}{ \lb F((q,\dot{q}) (\tau) \circ \psi) , \delta q (\tau) \circ \psi \rb d\tau}
   \]
   for arbitrary variations of the curve $q( \cdot )$ with fixed end points.  However, the variation $\delta q \circ \psi$ is merely a variation of the curve $q \circ \psi (\cdot)$ becuase
   \[
   	\delta q(\tau) \circ \psi = \left. \pder{}{\epsilon} \right|_{\epsilon = 0}( q(\tau, \epsilon) \circ \psi ),
   \]
   and if $q(\tau,\epsilon)$ is a deformation of $q( \tau)$, then $q(\tau,\epsilon) \circ \psi$ is a deformation of $q(\tau) \circ \psi$ by construction.
     Therefore,
   \[
    	\delta \int_{0}^{t}{ L( (q,\dot{q})(\tau) \circ \psi) d\tau} = \int_{0}^{t}{ \lb F((q,\dot{q})(\tau) \circ \psi) , \delta (q \circ \psi) \rb d\tau}
   \]
   for arbitrary variations of the curve $q \circ \psi$ with fixed end points.
   This last equation states that the curve $(q,\dot{q}) \circ \psi$ satisfies the Lagrange-d'Alembert principle.
   Thus, the flow $\Phi_t^{X_{TQ}}$ is $G$-invariant, as is the vector field $X_{TQ}$.
   By Proposition \ref{prop:reduced_vf}, there exists a $\pi^{TQ}_{/G}$-related flow and vector field on $TQ/G$.
   By Proposition \ref{prop:TQ/G}, we obtained the desired vector field $X_{P} \in \mathfrak{X}(P)$, and its flow $\Phi_t^{X_{P}}:P \to P$.
  \end{proof}
  
  Now that we know there exists a flow on $P$, one can ask for the equations of motion.
  
  \begin{prop}
  	The flow map of $P$  mentioned in Proposition \ref{prop:evolution1} for the Lagrangian $L$ and force $F$ is identical to the flow of the Lagrange-Poincar\'e-d'Alembert equation:
	\begin{align*}
		u_t + u \cdot \nabla u &= - \nabla p - \nu \Delta u \\
		\frac{D\dot{q}}{Dt} + \nabla U(q)  &= \sharp (F(q,\dot{q}) + F_{\partial \body}),
	\end{align*}
	where $F_{\partial \body} : P \to T^* B$ is the force that the fluid exerts on the body in order to satisfy the no-slip boundary condition.
  \end{prop}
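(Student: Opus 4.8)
The plan is to push the Lagrange--d'Alembert variational principle down from $TQ$ to $P = TQ/G$ and to read off the reduced equations directly in the coordinates $(b,\dot b,u)$ of Proposition~\ref{prop:TQ/G}. Proposition~\ref{prop:evolution1} already guarantees that a reduced vector field $X_P$ exists and is $\pi^{TQ}_{/G}$-related to $X_{TQ}$, so the only remaining task is to identify the equations its integral curves satisfy; no further reduction machinery is needed. In keeping with the paper's avoidance of principal connections, I would not split $TQ$ into horizontal and vertical pieces but reduce the variational principle itself. A curve in $Q$ is a pair $(b(t),\varphi(t))$ with $\varphi_t\in\Diff_{\vol}(\bulk_{b_0},\bulk_{b_t})$, and reduction by the particle-relabelling group $G$ replaces the fibre velocity $\dot\varphi$ by the spatial field $u=\dot\varphi\circ\varphi^{-1}\in\mathfrak{X}_{\rm div}(\bulk_b)$, exactly as in Proposition~\ref{prop:MC}. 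Writing $w:=\delta\varphi\circ\varphi^{-1}$ for the reduced fibre variation, the standard right-trivialised Euler--Poincar\'e computation gives the constrained variation $\delta u = \partial_t w - [u,w]$, with $[\cdot,\cdot]$ the Lie-algebra bracket of $\Diff_{\vol}$; its bracket term is precisely what generates the convective derivative $u\cdot\nabla u$ after an integration by parts.

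Varying the reduced action $\int_0^t \ell(b,\dot b,u)\,d\tau$ against the reduced force $F_{/G}=f_\mu+(F_\body\circ\rho)$ then proceeds in two channels. In the fibre ($u$) direction, varying the fluid kinetic energy $\frac12\int_{\bulk_b}\|u\|^2\,dx$ and integrating by parts in space and time produces the inviscid pair $u_t+u\cdot\nabla u$; enforcing incompressibility $\mathrm{div}\,u=0$ through a Lagrange multiplier introduces the pressure term $-\nabla p$; and the viscous force $f_\mu$ supplies $-\nu\Delta u$, where $\nu=\mu$. This reproduces the Navier--Stokes line. In the base ($b$) direction, the simple-mechanical body Lagrangian contributes $\frac{D\dot q}{Dt}+\nabla U(q)$ exactly as in \S\ref{sec:LDA}, while the shape force $F_\body$ enters the right-hand side through $\sharp$.

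The one genuinely delicate step --- and the one I expect to be the main obstacle --- is that the fluid domain $\bulk_b$ itself depends on the dynamical shape $b$, so the two channels are coupled through the moving boundary $\partial\bulk_b=\partial(b(\body))$ and through the no-slip relation $u(b(x))=\dot b(x)$ built into the definition of $P$. Varying $b$ moves the region of integration, which requires a transport (Reynolds-type) argument for the $b$-dependence of $\int_{\bulk_b}$, and the spatial integration by parts in the fluid channel leaves a boundary integral over $\partial\bulk_b$. I would \emph{define} $F_{\partial\body}:P\to T^*B$ to be precisely this boundary contribution --- equivalently, the d'Alembert reaction enforcing the no-slip constraint, which by Newton's third law is the traction the fluid exerts on the body --- so that it appears as the extra forcing $\sharp F_{\partial\body}$ in the body equation. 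The crux is to check that, once the boundary values of $u$ and $\delta u$ are re-expressed in terms of $\dot b$ and $\delta b$ via no-slip, all boundary terms assemble into a single well-defined covector field on $B$ with no residual pieces, so that the system genuinely decouples into a Navier--Stokes evolution on $\mathfrak{X}_{\rm div}(\bulk_b)$ (carrying no-slip as its boundary condition) and a forced simple-mechanical equation on $B$.

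Assembling the two channels yields exactly the stated Lagrange--Poincar\'e--d'Alembert system. Since the existence of the reduced flow is already secured by Proposition~\ref{prop:evolution1} and every manipulation above is one of the formal variational identities sanctioned by the conventions of \S1.2, establishing the asserted equality of flows needs no further analytic input: all of the content lies in the bookkeeping of the moving-boundary coupling that defines $F_{\partial\body}$.
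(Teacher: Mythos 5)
Your proposal is correct in outline, but it takes a genuinely different route from the paper. The paper's proof is essentially a citation: it invokes Theorem 7.1(c) of \cite{JaVa2013} together with \eqref{eq:LDA}, and its accompanying sketch performs Lagrange--Poincar\'e--d'Alembert reduction in the style of \cite{CeMaRa2001} by \emph{choosing a principal connection} $A: TQ \to \mathfrak{g}$ on the bundle $\pi^Q_B : Q \to B$ and reconstructing the spatial velocity as $u = h^{\uparrow}(b,\dot{b},\varphi) + \varphi_* A(b,\dot{b},\varphi,\dot{\varphi})$. You instead reduce the variational principle directly on $P$ in the coordinates $(b,\dot{b},u)$ of Proposition \ref{prop:TQ/G}, with constrained fibre variations $\delta u = \partial_t w - [u,w]$, and you define $F_{\partial\body}$ as the boundary term produced by the moving-domain integration by parts. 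This is a legitimate and more self-contained strategy: what the paper outsources to the cited theorem --- precisely the bookkeeping of the boundary coupling $u(b(x)) = \dot{b}(x)$, $w(b(x)) = \delta b(x)$ that you correctly isolate as the crux --- you would carry out by hand, at a level of formality consistent with the paper's stated conventions. What the connection-based route buys in exchange is a global splitting of $TQ/G$ into horizontal and vertical components, which is how the machinery of \cite{CeMaRa2001} and the theorem of \cite{JaVa2013} are organized, so the moving-boundary terms are absorbed into standard curvature/reconstruction formulas rather than tracked explicitly. One small correction: your motivation ``in keeping with the paper's avoidance of principal connections'' misreads the introduction --- the disclaimer about having no principal connections refers to the locomotion constructions of Shapere--Wilczek and Lewis, whereas the paper's own proof of this very proposition does use a connection; your approach avoids one, but that is a departure from, not an imitation of, the paper's argument.
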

  \begin{proof}
  	This is Theorem 7.1(c) of \cite{JaVa2013} paired with \eqref{eq:LDA}.
	Roughly speaking, one can obtain these equations of motion by performing Lagrange-Poincar\'e-d'Alembert reduction following \cite{CeMaRa2001}.
	This involves choosing a principal connection $A: TQ \to \mathfrak{g}$.
	The spatial velocity field is reconstructed by $u = h^{\uparrow}(b , \dot{b} , \varphi) + \varphi_*A( b, \dot{b} ,\varphi , \dot{\varphi})$,
	where $h^{\uparrow}$ is the horizontal lift.
  \end{proof}

\subsection{Reduction by frame invariance} \label{sec:SE}
  Consider the group of isometries of $\mathbb{R}^d$ denoted $\SE(d)$.
  Each $z \in \SE(d)$ sends $(b,\dot{b}, u) \in TQ/G$ to $(z (b,\dot{b}) , z_* u ) \in P$, where $z_*u \in \mathfrak{X}_{\rm div}( \bulk_{z \circ b})$ is the push-forward of the fluid velocity field $u \in \mathfrak{X}(\bulk_{b})$.
  This action is free and proper on $P$ so that the projection $\pi_{[P]}^{P} : P \to [P]$, where $[P] := \frac{P}{\SE(d)}$ is a principal bundle \cite[Prop 4.1.23]{FOM}.
  Additionally, $\SE(d)$ acts by vector-bundle morphisms, which are isomorphisms on each fiber.
  Therefore, $[P]$ inherits a vector-bundle structure from $P$.
   \begin{prop}
     There exists a unique vector-bundle projection $[\tau]: [P] \to [B]$ and a map $[\rho] : [P] \to T [B]$ such that $[\tau] \circ \pi^P_{[P]} = \tau$ and $[\rho] \circ \pi^{P}_{[P]} = \rho$.
   \end{prop}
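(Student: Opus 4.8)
The plan is to produce $[\tau]$ and $[\rho]$ by descending $\tau$ and $\rho$ through the quotient, using the universal property of the principal $\SE(d)$-bundle $\pi^P_{[P]}: P \to [P]$. Since the $\SE(d)$-action on $P$ is free and proper, $\pi^P_{[P]}$ is a surjective submersion, and so any smooth $\SE(d)$-invariant map out of $P$ factors uniquely and smoothly through $[P]$. Thus the whole argument reduces to exhibiting the appropriate invariant maps downstairs and then checking that the descent of $\tau$ is a vector-bundle projection.

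First I would record two equivariance facts. The projection $\tau$ is $\SE(d)$-equivariant over the action on $B$, since $\tau(z\cdot(b,\dot b,u)) = z\circ b = z\cdot\tau(b,\dot b,u)$; likewise $\rho$ is equivariant for the tangent-lifted action on $TB$, since $\rho(z\cdot(b,\dot b,u)) = (z\circ b,\, Tz\circ\dot b) = z\cdot\rho(b,\dot b,u)$, which is exactly the action on $TB$ introduced in \S\ref{sec:passive solids}. Next I would note that the base projection $\pi^B_{[B]}$ is $\SE(d)$-invariant, so $\pi^B_{[B]}\circ z = \pi^B_{[B]}$, and applying the tangent functor gives $T\pi^B_{[B]}\circ Tz = T\pi^B_{[B]}$; that is, the tangent lift $T\pi^B_{[B]}: TB \to T[B]$ is invariant under the tangent-lifted action as well. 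Combining these, the composites $\pi^B_{[B]}\circ\tau: P \to [B]$ and $T\pi^B_{[B]}\circ\rho: P \to T[B]$ are each constant on $\SE(d)$-orbits. (These composites are what the displayed identities $[\tau]\circ\pi^P_{[P]} = \tau$ and $[\rho]\circ\pi^P_{[P]} = \rho$ abbreviate, with $\pi^B_{[B]}$ and $T\pi^B_{[B]}$ suppressed, exactly as in Proposition \ref{prop:reduced_vf}.) The universal property then yields unique smooth maps $[\tau]:[P]\to[B]$ and $[\rho]:[P]\to T[B]$ with $[\tau]\circ\pi^P_{[P]} = \pi^B_{[B]}\circ\tau$ and $[\rho]\circ\pi^P_{[P]} = T\pi^B_{[B]}\circ\rho$; uniqueness is immediate from the surjectivity of $\pi^P_{[P]}$.

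It then remains to verify that $[\tau]$ is a vector-bundle projection. Here I would invoke the vector-bundle structure on $[P]$ already granted in the text: because $\SE(d)$ acts on the vector bundle $\tau:P\to B$ by vector-bundle morphisms (fiberwise linear isomorphisms) covering the free proper action on $B$, the quotient $[P]\to[B]$ is a vector bundle for which $\pi^P_{[P]}$ restricts to a linear isomorphism on each fiber. Since the action on $B$ is free, the stabilizer of any $b$ is trivial, so $\pi^P_{[P]}$ carries the fiber $\tau^{-1}(b)$ linearly and bijectively onto $[\tau]^{-1}([b])$; hence $[\tau]$ is precisely the projection of this quotient bundle. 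A short diagram chase using $\tau_B\circ\rho=\tau$ likewise shows that $[\rho]$ covers $[\tau]$, i.e. $\tau_{[B]}\circ[\rho]=[\tau]$, although the statement only asks that $[\rho]$ be a map.

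I expect the real friction to be bookkeeping rather than mathematics: correctly interpreting the abbreviated identities (with $\pi^B_{[B]}$ and $T\pi^B_{[B]}$ suppressed) and confirming smoothness of the descended maps, which rests entirely on $\pi^P_{[P]}$ being a surjective submersion. The single nontrivial ingredient is the local triviality of the quotient vector bundle $[P]\to[B]$ underlying the claim that $[\tau]$ is a vector-bundle projection; this is a standard consequence of combining local trivializations of $P\to B$ with the principal-bundle structure of $B\to[B]$, and the text has already asserted the resulting vector-bundle structure on $[P]$.
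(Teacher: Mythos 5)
Your proposal is correct and follows essentially the same route as the paper: both arguments descend $\tau$ and $\rho$ through the quotient by exploiting their $\SE(d)$-equivariance, with the vector-bundle structure on $[P]$ inherited because $\SE(d)$ acts by fiberwise linear isomorphisms. Your version is in fact slightly more careful than the paper's terse proof, since you make explicit the suppressed compositions with $\pi^B_{[B]}$ and $T\pi^B_{[B]}$ (needed for the identities to typecheck, especially for $[\rho]$ landing in $T[B]$ rather than $TB/\SE(d)$) and you invoke the surjective-submersion property for smoothness and uniqueness of the descended maps.
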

   \begin{proof}
     We see that $\tau( z \cdot \xi ) = z \cdot \tau( \xi)$ for any $z \in \SE(d)$ and $\xi \in P$.
     Applying the above formula to an entire coset in $[P]$ then maps to a coset in $[B]$.
     Thus the map $[\tau]: [P] \to [B]$ is well-defined by the condition $\tau = [\tau] \circ [\cdot]$.
     The same argument makes the map $[\rho]: [P] \to T[B ]$.
     The vector-bundle structure on $[P]$ can be observed directly.
      \end{proof}
      
      As everything in sight is $\SE(d)$-invariant, it is not surprising that we can express reduced equations of motion on $[P]$.
         
   \begin{prop} \label{prop:evolution2}
     There exists a vector field $X_{[P]} \in \mathfrak{X}([P])$ and a flow-map $\Phi_t^{[P]}:[P] \to [P]$ which is $\pi^{P}_{[P]}$-related to $X_P$ and $\Phi_t^{X_P}$.
   \end{prop}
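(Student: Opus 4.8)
The plan is to recognize this proposition as a second application of Proposition \ref{prop:reduced_vf}, now with the group $\SE(d)$ playing the role that $G$ played in Proposition \ref{prop:evolution1}. Since $[P] = P/\SE(d)$ and $\pi^P_{[P]}$ is the projection of a principal $\SE(d)$-bundle, Proposition \ref{prop:reduced_vf} furnishes a unique $\pi^P_{[P]}$-related vector field $X_{[P]}$ together with a $\pi^P_{[P]}$-related flow, provided only that $X_P$ is $\SE(d)$-invariant in the sense of \eqref{eq:Gvf}. Hence the entire substance of the proof is to verify that invariance, and the argument is a transcription of the proof of Proposition \ref{prop:evolution1}.

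First I would lift the $\SE(d)$-action from $P$ up to $Q$ and $TQ$. An isometry $z$ carries $\bulk_b$ onto $\bulk_{z\circ b}$ while preserving the volume form $dx$, so $z\circ\varphi \in \Diff_{\vol}(\bulk_{b_0},\bulk_{z\circ b})$ whenever $\varphi \in \Diff_{\vol}(\bulk_{b_0},\bulk_b)$. Thus $z\cdot(b,\varphi) := (z\circ b, z\circ\varphi)$ is a left $\SE(d)$-action on $Q$, and its tangent lift reduces under $\pi^{TQ}_{/G}$ to exactly the action $(b,\dot b,u)\mapsto(z(b,\dot b), z_* u)$ described in \S\ref{sec:SE}, since $(Tz\circ\dot\varphi)\circ(z\circ\varphi)^{-1} = z_*(\dot\varphi\circ\varphi^{-1})$. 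The decisive structural point is that this left $\SE(d)$-action commutes with the right particle-relabeling action of $G$: the former acts by postcomposition $\varphi\mapsto z\circ\varphi$ on the spatial side while the latter acts by precomposition $\varphi\mapsto\varphi\circ\psi$ on the reference side.

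Next I would check that the Lagrangian $L$ of \eqref{eq:total_Lagrangian} and the force $F$ of \eqref{eq:total_force} are $\SE(d)$-invariant. For the fluid kinetic energy, the change of variables $x\mapsto z(x)$ together with $\|Tz\cdot v\| = \|v\|$ and the invariance of $dx$ gives $\int_{\bulk_{z\circ b}}\|z_* u\|^2\,dx = \int_{\bulk_b}\|u\|^2\,dx$, while $L_\body$ is $\SE(d)$-invariant by the standing assumptions of \S\ref{sec:passive solids}. For the force, the viscous term is invariant because the flat Laplacian commutes with isometries and the $L^2$-pairing is isometry-invariant, so $\langle f_\mu(z\cdot\xi), z\cdot\eta\rangle = \langle f_\mu(\xi),\eta\rangle$; and $F_\body$ is invariant by construction, being the pullback of a force on the shape space $[B]$. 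With $L$ and $F$ shown $\SE(d)$-invariant, the verbatim variational argument of Proposition \ref{prop:evolution1}, with $z$ in place of $\psi$, shows that the Lagrange-d'Alembert vector field $X_{TQ}$ is $\SE(d)$-invariant. Because the $\SE(d)$- and $G$-actions commute, this invariance descends along $\pi^{TQ}_{/G}$ to the statement that $X_P$ is $\SE(d)$-invariant on $P$, and Proposition \ref{prop:reduced_vf} then delivers $X_{[P]}$ and $\Phi^{[P]}_t$.

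I expect the only genuine obstacle to be the bookkeeping attached to the $b$-dependent fluid domains. One must confirm that the isometry $z$ intertwines the divergence-free and no-slip structure on $\bulk_b$ with that on $\bulk_{z\circ b}$, so that $z_* u$ is again an admissible element of the fiber of $P$ over $z\circ b$, and that the changes of variables in both the kinetic energy and the viscous pairing are exact rather than merely formal in this infinite-dimensional setting. Everything beyond this is a direct reuse of the $G$-reduction already carried out in Propositions \ref{prop:evolution1} and \ref{prop:reduced_vf}.
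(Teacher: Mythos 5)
Your proposal is correct and follows essentially the same route as the paper: both hinge on showing the Lagrange-d'Alembert dynamics on $TQ$ are $\SE(d)$-invariant via the variational principle (the paper runs the argument with $z$ and $\psi$ simultaneously and passes to $G$-cosets, whereas you first establish $\SE(d)$-invariance of $X_{TQ}$ and then descend using the commuting left/right actions), and both conclude by invoking Proposition \ref{prop:reduced_vf}. Your explicit verification that $L$ and $F$ are $\SE(d)$-invariant is a detail the paper leaves implicit, but it does not change the nature of the argument.
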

  \begin{proof}
    Let $q:[0,T] \to Q$ be a curve such that the time derivative $(q,\dot{q}): [0,T] \to TQ$ is an integral curve of the Lagrange-d'Alembert equations with initial condition $(q,\dot{q})_0$ and final condition $(q,\dot{q})_T$.  Then the Lagrange-d'Alembert variational principle states that
    \[
    	\delta \int_{0}^{T}{ L( q,\dot{q}) dt } = \int_{0}^{T}{ \lb F(q,\dot{q}) , \delta q \rb dt}
    \]
    for all variations of the curve $q( \cdot)$ with fixed endpoints.  Note that for each $\psi \in G$ and $z \in \SE(d)$, the action satisfies $\int_{0}^{T}{ L(q,\dot{q}) dt} = \int_{0}^{T}{ L( z \cdot (q,\dot{q}) \circ \psi ) dt}$ and the virtual-work is
   \begin{align*}
    	\int_{0}^{T}{ \lb F(q,\dot{q}) , \delta q \rb dt} &= \int_{0}^{T}{ \lb F_{/G}(z \cdot  \pi_{/G}( q,\dot{q}) , z \cdot \pi_{/G}( \delta q) \rb dt} \\
	&= \int_0^T{ \lb F( z \cdot (q,\dot{q}) \circ \psi ) , z \cdot \delta q \circ \psi \rb dt}.
   \end{align*}
   Therefore we observe that
   \[
    	\delta \int_{0}^{T}{ L( z \cdot (q,\dot{q}) \circ \psi) dt } = \int_{0}^{T}{ \lb F( z \cdot (q,\dot{q}) \circ \psi) , z \cdot \delta q \circ \psi \rb dt}
   \]
   for arbitrary variations of the curve $q( \cdot )$ with fixed end points.  However, the variation $z \cdot \delta q \circ \psi$ is merely a variation of the curve $z \cdot q \circ \psi (\cdot)$.  Therefore,
   \[
    	\delta \int_{0}^{T}{ L( z \cdot (q,\dot{q}) \circ \psi) dt } = \int_{0}^{T}{ \lb F( z \cdot (q,\dot{q}) \circ \psi) , \delta (z \cdot q \circ \psi) \rb dt}
   \]
   for arbitrary variations of the curve $z \cdot q \circ \psi$ with fixed end points.  This last equation states that the curve $z \cdot (q,\dot{q}) \circ \psi$ satisfies the Lagrange-d'Alembert principle.  Thus, $\Phi_T^{TQ}(z \cdot (q,\dot{q})_0 \circ \psi)= z \cdot \Phi_{T}^{TQ}(q,\dot{q}) \circ \psi$.  Since $\psi \in G$ was arbitrary, we may apply $\Phi_T^{TQ}$ to the entire coset $z \cdot (q,\dot{q}) \cdot G$ to find $\Phi^{TQ}_T( z \cdot (q,\dot{q} ) \cdot G) = z \cdot \Phi_T^{TQ}(q,\dot{q}) \cdot G$.
  This map from $G$-cosets to $G$-cosets is the defining condition for $\Phi_T^P$.
  Therefore, the last equation states that $\Phi^P_T(z \cdot \xi) = z \cdot \Phi_T^P(\xi)$, where $\xi = \pi^{TQ}_{/G}((q,\dot{q})_0)$.
  In other words, $\Phi_T^P$ is $\SE(d)$-invariant.
  Therefore, by Proposition \ref{prop:reduced_vf} the theorem follows.
  \end{proof}

\section{Asymptotic Behavior} \label{sec:asy}
  It is commonplace to assume that the asymptotic behavior of a simple mechanical system with dissipation approaches a state of minimum energy.
  In this section, we will verify that the asymptotic behavior of the Lagrangian system described in the previous section tends towards the minimizers of the elastic potential energy, $U$.

\begin{prop} \label{prop:stable_set}
Assume the Lagrangian $L$ of \eqref{eq:total_Lagrangian} and the external force $F$ of \eqref{eq:total_force}.  Let $q:[0,\infty) \to Q$ be a curve such that the time derivative $(q,\dot{q}) : [0, \infty) \to TQ$ is an integral curve of the Lagrange-d'Alembert equations for the Lagrangian $L$ and the force $F$.  Then the $\omega$-limit set of $(q,\dot{q})( \cdot )$ is contained in the set $dU^{-1}(0) := \{ (q,0) \in TQ \quad \vert \quad dU(q) = 0 \}$.
\end{prop}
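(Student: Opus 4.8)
The plan is to use the total mechanical energy as a Lyapunov function and to close the argument with a LaSalle-type invariance principle. Define $E:TQ\to\mathbb{R}$ by $E(q,\dot q)=\tfrac12\langle\dot q,\dot q\rangle_Q+U(q)$. Because $L$ is a simple mechanical Lagrangian, the usual work--energy identity for the Lagrange-d'Alembert equations \eqref{eq:LDA} gives $\frac{d}{dt}E(q(t),\dot q(t))=\langle F(q,\dot q),\dot q\rangle$ along any integral curve. Since $L$ and $F$ are $G$-invariant, I would carry out the computation on the reduced bundle $P$, where $E$ descends to $(b,\dot b,u)\mapsto K_\body(b,\dot b)+\tfrac12\int_{\bulk_b}\|u\|^2\,dx+U(b)$ and the reduced force is $F_{/G}=f_\mu+(F_\body\circ\rho)$, so that $\dot E=\langle f_\mu(\xi),\xi\rangle+\langle F_\body(v_{[b]}),v_{[b]}\rangle$ with $\xi=(b,\dot b,u)$ and $v_{[b]}$ the induced shape velocity.

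The next step is to show that $\dot E\le 0$ and to pin down its zero set. Pairing $f_\mu$ with the fluid velocity and integrating by parts gives $\langle f_\mu(\xi),\xi\rangle=\mu\int_{\bulk_b}\Delta u\cdot u\,dx=-\mu\int_{\bulk_b}\|\nabla u\|^2\,dx\le 0$, the interface and far-field boundary contributions being controlled by the no-slip condition and the decay-at-infinity hypotheses recorded in the Hodge-decomposition footnote. The body term is nonpositive by the concavity assumption \eqref{eq:concavity}, vanishing exactly when $v_{[b]}=0$. Hence $\dot E\le 0$, with equality precisely when $\nabla u=0$ and $v_{[b]}=0$. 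Since $u$ is divergence-free and decays at infinity, $\nabla u=0$ forces $u=0$; the no-slip condition $\dot b(x)=u(b(x))$ on $\partial\body$ then annihilates the boundary velocity, and as no nonzero infinitesimal isometry can vanish on the entire embedded boundary, the $\SE(d)$-component of $\dot b$ vanishes too. Together with $v_{[b]}=0$ this gives $\dot b=0$, so the zero-dissipation set is exactly $\{\dot q=0\}$.

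Finally I would invoke LaSalle's invariance principle: the $\omega$-limit set of the trajectory lies in the largest invariant subset of $\{\dot E=0\}=\{\dot q=0\}$. On such an invariant subset the curve is constant, so $\frac{D\dot q}{Dt}=0$; moreover $f_\mu$ is linear in $u$ and $F_\body$ is a friction force, so both vanish at zero velocity and $F(q,0)=0$. Substituting into \eqref{eq:LDA} leaves $0=\nabla U(q)$, i.e. $dU(q)=0$ because $\sharp$ is an isomorphism. Thus the largest invariant subset of $\{\dot q=0\}$ is contained in $dU^{-1}(0)$, and therefore so is the $\omega$-limit set.

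I expect the real obstacle to be analytic rather than algebraic. LaSalle's principle requires precompactness of the forward orbit for the $\omega$-limit set to be nonempty and invariant, and securing such compactness for a solid coupled to a Navier-Stokes fluid is precisely the kind of PDE estimate the paper has placed out of scope; under its standing convention that solutions exist and all manipulations are formal, this is taken for granted. The remaining points requiring genuine care are the vanishing of the viscous boundary terms so that $\langle f_\mu(\xi),\xi\rangle$ is truly negative semidefinite, the implication $\nabla u=0\Rightarrow u=0$ from the decay and divergence-free conditions, and the rigidity observation that the interface velocity determines the $\SE(d)$-part of $\dot b$.
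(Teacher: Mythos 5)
Your proposal is correct and follows essentially the same route as the paper: use the mechanical energy as a Lyapunov function, note that the dissipation $\dot E = \lb F_\body(\dot b),\dot b\rb + \lb F_\mu(q,\dot q),(q,\dot q)\rb$ forces the $\omega$-limit set into the zero section of $TQ$, and then use invariance of the $\omega$-limit set together with $F(q,0)=0$ and the Lagrange-d'Alembert equations \eqref{eq:LDA} to conclude $\nabla U = 0$ there. Your treatment is in fact somewhat more careful than the paper's, which asserts the negativity of the dissipation directly from \eqref{eq:concavity} without spelling out the integration by parts $\mu\int \Delta u\cdot u\,dx = -\mu\int\|\nabla u\|^2dx$, the rigidity argument showing the zero-dissipation set is exactly $\{\dot q = 0\}$, or the precompactness caveat behind the LaSalle step.
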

\begin{proof}
  The energy is the function $E: TQ \to \mathbb{R}$ given by
  \begin{align*}
  	E(q,\dot{q}) &:= \lb \mathbb{F}L(q,\dot{q}) , \dot{q} \rb - L(q,\dot{q}).
  \end{align*}
   Given any Lagrangian system on a Riemannian manifold where the Lagrangian is the kinetic energy minus the potential energy, the time derivative of the generalized energy under the evolution of the Lagrange-d'Alembert equations is given by $\dot{E} = \lb  F(\dot{q}) , \dot{q} \rb$.  In this case, we find
   \[
   	\dot{E}(q,\dot{q}) = \lb F_{\body}(\dot{b}) , \dot{b} \rb + \lb F_{\mu}( q,\dot{q}), (q,\dot{q}) \rb.
   \]
   However, by \eqref{eq:concavity}, this is a convex function on each fiber of $TQ$ in a neighborhood of the zero-section.
   Therefore the $\omega$-limit of $(q,\dot{q})( \cdot )$, denoted $M^{\omega}$, must be a subset of the zero section of $TQ$.
   Moreover, the Lagrange-D'Alembert equations state
   \[
   	\frac{D \dot{q}}{Dt} = - \nabla U(b) + \sharp( F(q,\dot{q}) ),
   \]
   where $\sharp:T^{\ast}Q \to TQ$ is the sharp map associated the metric on $Q$.  However, $F(q,\dot{q}) = 0$ when $\dot{q} = 0$, which is the case for points in $M^{\omega}$.  Thus, the vector field on $M^{\omega}$ must satisfy
   \[
   	\frac{D \dot{q}}{Dt} = - \nabla U(b).
   \]
   However, $M^\omega$ is an invariant set.
   Therefore, the Lagrange-d'Alembert vector field must be tangential to $M^\omega$.
   As $M^\omega$ is contained in the zero-section of $TQ$, the second derivative of $q(t)$ must vanish in order to remain in the zero section.
   Thus, we find $\frac{D \dot{q}}{Dt} = 0$ on $M^{\omega}$ which implies $\nabla U = 0$ on $M^{\omega}$.
   That is to say, $M^{\omega} \subset dU^{-1}(0)$.
\end{proof}

\begin{cor} \label{cor:stable_point}
  Let $[U]: [B] \to \mathbb{R}$ be the unique function on the shape-space of the body such that $[U]( [ b] ) = U(b)$ for all $b \in B$.  Assume that $[U]$ has a unique minimizer $s_{\min} \in [B]$, and let $(s_{\min})^{0}_{\uparrow} \in [P]$ denote the element of the zero section of $[\tau]: [P] \to [B]$ above $s_{\min} \in [B]$.  If $(q,\dot{q}): [0,\infty) \to TQ$ is an integral curve of the Lagrange-d'Alembert equations, then $[\xi] (t) = [ \pi_{/G}(q,\dot{q}(t))]$ must approach $(s_{\min})^{0}_{\uparrow} \in [P]$.  If the flow of the Lagrange-d'Alembert equations is complete, this means that $(s_{\min})^0_{\uparrow}$ is a global (weakly) hyperbolically stable fixed point for the vector field $X_{[P]}$.
\end{cor}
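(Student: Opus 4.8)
The plan is to use the generalized energy $E$ (defined in the proof of Proposition~\ref{prop:stable_set}) as a Lyapunov function after descending it to the doubly-reduced space $[P]$. First I would observe that $E : TQ \to \mathbb{R}$ is invariant under both the particle-relabeling action of $G$ and the frame action of $\SE(d)$: the kinetic term is built from the $\SE(d)$-invariant Riemannian metric and is $G$-invariant by Proposition~\ref{prop:MC}, while the potential term is $U = [U] \circ \pi^B_{[B]}$ by construction and hence $\SE(d)$-invariant. Consequently $E$ descends to a function $[E] : [P] \to \mathbb{R}$ satisfying $[E] \circ \pi^P_{[P]} \circ \pi^{TQ}_{/G} = E$. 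On the zero section of $[\tau] : [P] \to [B]$ the kinetic part vanishes, so $[E]$ restricted to the zero section equals $[U] \circ [\tau]$; since $[U]$ has the unique minimizer $s_{\min}$, the function $[E]$ attains its strict global minimum precisely at $(s_{\min})^0_{\uparrow}$.

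Next I would transport the conclusion of Proposition~\ref{prop:stable_set} down to $[P]$. Because the potential depends only on the body configuration $b$ and $\pi^B_{[B]}$ is a submersion, the set $dU^{-1}(0) \subset TQ$ consists of points $(b,0,\varphi,0)$ with $[b] \in [B]$ a critical point of $[U]$. Applying $\pi^{TQ}_{/G}$ sends such a point to $(b,0,0) \in P$, since the fluid velocity $u = \dot\varphi \circ \varphi^{-1}$ vanishes, and applying $\pi^P_{[P]}$ sends it to the zero-section element above $[b]$. Thus the $\omega$-limit set of the reduced curve $[\xi](t) = [\pi_{/G}(q,\dot q(t))]$ is contained in the set of zero-section points of $[P]$ lying above critical points of $[U]$.

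Then I would combine these two facts. The identity $\dot E = \lb F_{\body}(\dot b), \dot b \rb + \lb F_{\mu}(q,\dot q),(q,\dot q)\rb \leq 0$ from Proposition~\ref{prop:stable_set} descends to $\frac{d}{dt}[E] \leq 0$ along $X_{[P]}$, so $[E]$ is a genuine Lyapunov function for the reduced flow whose strict global minimum sits at $(s_{\min})^0_{\uparrow}$. Lyapunov's theorem then yields Lyapunov stability of $(s_{\min})^0_{\uparrow}$, and a LaSalle-type argument—restricting the flow to the largest invariant subset of $\{\dot{[E]} = 0\}$, which by the previous paragraph lies in the critical zero-section—yields local asymptotic stability. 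The qualifier \emph{weakly hyperbolic} reflects that the dissipation $F_{\body} + F_{\mu}$ damps only the shape velocity and the fluid, so the linearization at $(s_{\min})^0_{\uparrow}$ is expected to carry spectrum in the closed left half-plane, with neutral directions along the incompressible, force-free modes rather than spectrum strictly in the open left half-plane.

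The main obstacle is upgrading local asymptotic stability to the asserted global statement, and doing so in the infinite-dimensional setting. Two points need care. First, the $\omega$-limit set is nonempty and invariant only if the trajectory is precompact; in the fluid component this demands compactness estimates outside the formal manipulations adopted here, so I would either assume precompactness or restrict attention to the finite-dimensional shape dynamics. Second, if $[U]$ possesses critical points other than $s_{\min}$, the $\omega$-limit analysis alone permits convergence to any of them; genuine global attraction to $(s_{\min})^0_{\uparrow}$ then relies on the Lyapunov structure forcing every non-minimal critical point to be unstable, so that only the strict minimizer is attracting. Under the stated hypothesis that $s_{\min}$ is the unique minimizer, and interpreting the claim in the formal spirit of the paper, the non-minimal critical points are rejected as limits of the reduced flow, leaving $(s_{\min})^0_{\uparrow}$ as the global weakly hyperbolic attractor.
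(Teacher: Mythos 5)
Your core argument---transporting the conclusion of Proposition \ref{prop:stable_set} down to $[P]$ and then invoking uniqueness---is exactly the paper's proof, which consists of only those steps: solutions approach $dU^{-1}(0)$, hence the reduced dynamics approach $d[U]^{-1}(0)$, and ``there is only one such point.'' What you add is genuinely different in emphasis and worth comparing. First, the paper proves attraction only and never justifies the \emph{stability} part of the claim; your descended energy $[E]$, with strict global minimum at $(s_{\min})^{0}_{\uparrow}$ and $\frac{d}{dt}[E] \le 0$ along the reduced flow, supplies the Lyapunov/LaSalle argument that the statement actually needs. Second, you correctly flag two points the paper silently passes over: (i) nonemptiness and invariance of the $\omega$-limit set require precompactness of trajectories, which is unaddressed in this formal, infinite-dimensional setting; and (ii) $d[U]^{-1}(0)$ is the set of \emph{critical points} of $[U]$, not of minimizers, so the paper's ``there is only one such point'' tacitly strengthens the hypothesis from ``unique minimizer'' to ``unique critical point.'' One caution on your resolution of (ii): instability of non-minimal critical points does not by itself ``reject'' them as limits, since trajectories can converge to unstable equilibria along their stable sets; a rigorous repair would either assume $s_{\min}$ is the only critical point or argue that the basin of such saddle-type points is thin. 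The paper's own proof has the same defect, only unacknowledged, so your version is the more honest of the two.
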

\begin{proof}
  In proposition \ref{prop:stable_set}, we showed that solutions approach points within the set $dU^{-1}(0)$ asymptotically.  This implies that the dynamics on $[P]$ must approach $d[U]^{-1}(0)$ asymptotically.  However, there is only one such point.
\end{proof}

 In the next section, we will periodically perturb this stable equilibria to obtain a loop in $[P]$.

\begin{example}
  Consider the swimmer of Example \ref{ex:two_link} and Figure \ref{fig:two_link}.  Corollary \ref{cor:stable_point} asserts that the state where the swimmer and the water is stationary and $\phi_1 - \phi_2 = \bar{\theta}$ is asymptotically stable.
\end{example}

\section{Swimming} \label{sec:swimming}
%
  In order to understand swimming, let us consider a time-periodic internal body force, $F_{\rm swim}: T [B] \times S^1 \to T^{\ast}[B]$.
  Such a force should be designed to model the time-periodic activation of muscles in a fish, or control forces for an underwater vehicle.
  This force can be lifted via the map $\pi^{B}_{[B]} \circ \pi^Q_B: Q \to [B]$ to obtain a $G,\SE(d)$-invariant invariant force on $Q$.
  \emph{The addition of this time periodic force force on $Q$ alters the Lagrange-d'Alembert equations linearly by the addition of a $G,\SE(d)$-invariant time-periodic vector field $X_{\rm swim}$.}
  To consider small perturbations, we can consider scaling this time-periodic force by a real parameter $\varepsilon \in \mathbb{R}^+$, so that the Lagrange-d'Alembert vector field is now given by the time-periodic vector field $X_{TQ,\varepsilon} = X_{TQ} + \varepsilon X_{\rm swim} \in \mathfrak{X}(TQ)^{S^1}$.
  This deformed vector-field is also $G,\SE(d)$-invariant; thus, there exists a reduced vector fields $X_{P,\varepsilon} \in \mathfrak{X}(P)^{S^1}$ which is $\pi^Q_{P}$-related to $X_{TQ,\epsilon}$,  and a vector field $X_{[P],\varepsilon} \in \mathfrak{X}([P])^{S^1}$ which is $\pi^{P}_{[P]}$-related to $X_{P,\varepsilon}$.
 The vector field $X_{[P],\varepsilon}$ is a time-periodic deformation of $X_{[P]}$.
 As $X_{[P]}$ admits an asymptotically stable point by Corollary \ref{cor:stable_point}, we are reasonably close to being able to prove the existence of a $\SE(d)$-relative limit cycle for $X_{P,\varepsilon}$ for small $\varepsilon > 0$.
 
 {\bf Desideratum:}
 {\it For sufficiently small $\varepsilon > 0$, the vector-field $X_{[P],\varepsilon}$ admits a non-autonomous exponentially stable limit cycle.
 Moreover, $X_{P,\varepsilon}$ admits stable relative limit cycle. }
 
 If we were to assume Proposition \ref{prop:NAESRLC} held for infinite-dimensional dynamical systems, then as $X_{[P],\varepsilon}$ is a deformation of $X_{[P]}$ we could deduce that $X_{P,\varepsilon}$ admits a non-autonomous exponentially stable limit cycle for sufficiently small $\varepsilon > 0$.
 As a result, this would imply $X_{P,\epsilon}$ admits $\SE(d)$-relative limit cycles which are $\pi^{P}_{[P]}$-related to the limit cycle in $[P]$.

 Unfortunately, we are unable to do this because Proposition \ref{prop:NAESRLC} is limited to finite-dimensional manifolds and vector fields with exponentially stable points.
 The vector-field $X_{[P]}$ is on an infinite-dimensional space where we have only proven asymptotic stability.
 We will not overcome this difficulty; however, we are at least able to speculate on how to deal with this.
 For example, there does exist extensions of normal hyperbolicity and persistence to infinite-dimensional dynamical systems on a Hilbert manifold  \cite{Jones1999,Bates1998}.
 Alternatively, we could consider finite-dimensional models for fluid-structure interaction, such as the immersed boundary method \cite{Peskin2002}.
 In the next section, we will informally speculate on this latter approach.

\subsection{Analytic concerns and approximate relative limit cycles} \label{sec:analytic}
Up until now, the paper has been fairly rigorous and complete.
This start of this section marks the end of this theorem-proof formalism.
Instead, we provide a more speculative discussion on how one can overcome the challenges to obtaining relative limit cycles in $P$.

There are two issues of concern.  The first is the lack of a ``spectral gap'' with respect to the equilibrium associated to $s_{\min} \in [B]$.
That is to say, it is not immediately obvious if there exists a convergence bound $\rho > 0$ with respect to $s_{\min}$, as is required in order to use Theorem \ref{thm:persistence} and its offspring, Proposition \ref{prop:NAESRLC}.
It is possible that there does not exist any such $\rho$.
For simple mechanical systems, $\rho$ is related to the spectrum of the Rayleigh dissipation function.
In our case, this spectrum includes the spectrum of the Laplace-operator on a non-compact domain, which \emph{does not contain a spectral gap!}

The second issue is the non-completeness of $[P]$.  As $Q$ is an infinite-dimensional Fr\'echet manifold, so is $[P]$.
This is a concern because both Theorem \ref{thm:persistence} and Proposition \ref{prop:NAESRLC} require completeness in order to provide an existence-uniqueness result.
There do exist generalizations of Theorem \ref{thm:persistence} to infinite-dimensional Banach manifolds, but not Fr\'echet manifolds \cite{Jones1999,Bates1998}.

  Therefore, using the persistence theorem directly will not allow us to assert the existence of a relative limit cycle on $P$.
  Perhaps other methods besides normal hyperbolicity theory could be employed, but this would be an exploration for another paper.
  
    However,  we can consider an option which is morally the converse of an idea illustrated in \cite{Jones1999}, wherein discrete approximations are invoked.
    There exists a number of finite-dimensional models for the space $P$ used by engineers to study fluid-structure interaction.
    It is fairly common to approximate the fluid velocity field on a finite-dimensional space and model the solid using a finite element method (e.g. \cite{Peskin2002}).
    Let us call this finite-dimensional space $P_{\discrete}$.
    Moreover, one can usually act on $P_{\discrete}$ by $\SE(d)$ by simply rotating and translating the finite elements and the grid.
    If the model on $P_{\discrete}$ converges as the time step and spatial resolution go to zero, then we could reasonably restrict ourselves to methods which dissipate energy at a rate which is quadratic and positive definite in the state velocity.
    This is not too much to expect, as a good method ought to converge.\footnote{The immersed boundary method \cite{Peskin2002} and smooth-particle hydrodynamics \cite{Monaghan1977,Lucy1977} are both candidates.}
    By the same arguments as before, the dynamics will exhibit hyperbolically stable equilibria on the quotient space $[P_{\discrete}] = \frac{ P_{\discrete} }{\SE(d)}$.
    Upon adding a periodic perturbation to the dynamics on $[P_{\discrete}]$, one could apply Proposition \ref{prop:NAESRLC} directly to assert the existence of a non-autonomous exponentially stable relative limit cycle $\gamma_{\discrete}(t) \in P_{\discrete}$.  In particular, by Proposition \ref{prop:regular}, $\gamma_{\discrete}(t)$ must satisfy
    \[
	\gamma_{\discrete}(t) = z^{ \lfloor t \rfloor } \cdot \gamma_{\discrete}(t - \lfloor t \rfloor)
    \]
 for some $z \in \SE(d)$, where $\lfloor t \rfloor = \sup \{ k \in \mathbb{Z} : k \leq t \}$.
    If the model on $P_{\discrete}$ converges, then there exists a trajectory in $\gamma(s) \in P$ which is well-approximated by $\gamma_{\discrete}(t)$, over a single time period, by the definition of ``convergence.''
    Then, the equation $\gamma(t) = z^{ \lfloor t \rfloor } \cdot \gamma(t - \lfloor t \rfloor)$ would hold up to numerical error.
    In other words, the immersed body would move in an \emph{approximately} relatively periodic fashion, reminiscent of swimming.

\section{Conclusion and Future work}  \label{sec:Conclusion}
  It is widely observed that steady swimming is periodic, and this observation inspired the question, ``Is it possible to interpret swimming as a limit cycle?''
  In this paper, we have illustrated the crucial role played by $\SE(d)$-reduction in answering this question.
  Moreover, we have posed a possible answer, accurate up to the spatial discretization error of a numerical method.
  The existence of these hypothetical relative limit cycles would provide robustness to mechanisms of locomotion, and conform with behavior observed in real systems \cite{Alben_Shelley_2005,Bhalla2013,Liao2003a,Liao2003b,Tytell2010,Wilson2011}.
  \emph{Given the complexity of fluid-structure interaction, it is not immediately clear that one could expect such orderly behavior.}
  This potential orderliness could be exploited in a number of applications.

\begin{enumerate}
\item {\bf Robotics and Optimal Control}
  The interpretation of swimming as a limit cycle may permit a non-traditional framework for controller design.
  For example, if our control forces are parametrized by a space $C$, then we may consider the set of loops, $\mathrm{loop}(C)$.
  The limit cycle hypothesis would imply the existence of a subset $W \subset {\rm loop}(C)$ and a map $\Gamma: W \to \mathrm{loop}( [P] )$ which outputs the periodic limit cycle in $[P]$, resulting asymptotically from the time-periodic control signals in $W$.  Given $\Gamma$,  we may define a control cost functional on $W$ based upon a reward function on ${\rm loop}([P])$.  As such a cost functional would only respond to the asymptotic behavior of the system, one could surmise that it would not overreact to transient dynamics.
 
\item {\bf Transient dynamics} Although trajectories may approach a limit cycle, the transient dynamics are still important.
The transient dynamics would re-orient and translate the body before orderly periodic behavior takes effect.
Therefore, if one desires to create locomotion through periodic control inputs, one should try to get onto a limit cycle quickly in order to minimize the duration where transient dynamics dominate.

\item {\bf Pumping}
  In the current setup, one could consider a reference frame attached to the body.  In this reference frame, ``swimming'' manifests as fluid moving around the body in a regular fashion.  This change in our frame of reference describes pumping.

\item {\bf Passive Dynamics}
  This paper does \emph{not} address the dual problem.  By the dual problem, we mean: ``Given a constant fluid velocity at infinity, what periodic motion (if any) will a tethered body approach as time goes to infinity?''  In this dual problem, the motion of the body is given first, and parameters such as the period of the limit cycle are emergent phenomena.  In particular, the dual problem of a flapping flag immersed in a fluid with a constant velocity at infinity has received much attention in the applied mathematics community (see \cite{Shelley2005} and references therein).  Here, it is generally not the case that a limit cycle will emerge, and the system is capable of admitting chaos.
\item {\bf Other types of locomotion}
  The notion that walking may be viewed as a limit cycle is fairly common \cite{GaChaRuCo98,Hobbelen,McGreer1990}.  Moreover, it is conceivable that flapping flight is a limit cycle as well \cite{LiuRistroph2012}.  However, for both of these systems, $\SE(3)$ symmetry is broken by the direction of gravity.
    Because of this, it is not immediately clear that one can import the methods used here to understand flapping flight and terrestrial locomotion.  However, perhaps this is merely a challenge to be overcome.  In particular, these systems still exhibit $\SE(2)$ symmetry.  For the case of 2D bipedal walkers, we have an $\mathbb{R}$-symmetry and the stability problems due to falling will not manifest.  Here, one can find limit cycles using regularized models of the ground \cite{JaEl2013}.
\end{enumerate}

\subsection{Acknowledgements}
The notion of swimming as a limit cycle was initially introduced to me by Erica J. Kim while she was studying hummingbirds.
Additionally, Sam Burden, Ram Vasudevan, and Humberto Gonzales provided much insight into how to frame this work for engineers.
I would also like to thank Professor Shankar Sastry for allowing me to stay in his lab for a year and meet people who are outside of my normal research circle.
I would like to thank Eric Tytell for suggesting relevant articles in neurobiology,
Amneet Pal Singh Bhalla for allowing me to reproduce figures from \cite{Bhalla2013}, and Peter Wallen for allowing me to reproduce figures from \cite{WallenWilliams1984}.
An early version of this paper was written in the context of Lie groupoid theory, where the guidance of Alan Weinstein was invaluable.
Jaap Eldering and Joris Vankerschaver have given me more patience than I may deserve by reading my papers and checking my claims.
Major contributions to the bibliography and the overall presentation of the paper were provided by Jair Koiller.
Finally, the writing of this paper was solidified with the help of Darryl Holm.
This research has been supported by the European Research Council Advanced Grant 267382 FCCA and NSF grant CCF-1011944.

\bibliographystyle{amsalpha}
\bibliography{hoj_swimming}

\newcommand{\etalchar}[1]{$^{#1}$}
\providecommand{\bysame}{\leavevmode\hbox to3em{\hrulefill}\thinspace}
\providecommand{\MR}{\relax\ifhmode\unskip\space\fi MR }
\providecommand{\MRhref}[2]{%
  \href{http://www.ams.org/mathscinet-getitem?mr=#1}{#2}
}
\providecommand{\href}[2]{#2}
\begin{thebibliography}{KMRMH05}

\bibitem[AK92]{ArKh1992}
V.~I. Arnold and B.~A. Khesin, \emph{Topological methods in hydrodynamics},
  Applied Mathematical Sciences, vol.~24, Springer Verlag, 1992.

\bibitem[AM78]{FOM}
R.~Abraham and J.~E. Marsden, \emph{Foundations of {M}echanics},
  Benjamin/Cummings Publishing Co. Inc. Advanced Book Program, Reading, Mass.,
  1978, Second edition, revised and enlarged, with the assistance of Tudor
  Ratiu and Richard Cushman. Reprinted by AMS Chelsea, 2008.

\bibitem[Arn66]{Arnold1966}
V.~I. Arnold, \emph{Sur la g\'{e}om\'{e}trie diff\'{e}rentielle des groupes de
  lie de dimenision infinie et ses applications \`{a} l'hydrodynamique des
  fluides parfaits}, Annales de l'Institute Fourier \textbf{16} (1966),
  316--361.

\bibitem[AS05]{Alben_Shelley_2005}
S.~Alben and M.~J. Shelley, \emph{{Coherent locomotion as an attracting state
  for a free flapping body}}, Proceedings of the National Academy of Sciences
  of the United States of America \textbf{102} (2005), no.~32, 11163--11166.

\bibitem[BGP13]{Bhalla2013}
A.~P.~S. Bhalla, B.~E. Griffith, and N.~A. Patankar, \emph{A forced damped
  oscillation framework for undulatory swimming provides new insights into how
  propultion arises in active and passive swimming}, {PLOS} Computational
  Biology \textbf{9} (2013), no.~6, e1003097.

\bibitem[BHT{\etalchar{+}}06]{Beal2006}
D.~N. Beal, F.~S. Hover, M.~S. Triantafyllou, J.~C. Liao, and G.~V. Lauder,
  \emph{Passive propulsion in vortex wakes}, Journal of Fluid Mechanics
  \textbf{549} (2006), 385--402.

\bibitem[BLZ98]{Bates1998}
P.~W. Bates, K.~Lu, and C.~Zeng, \emph{Existence and persistence of invariant
  manifolds for semiflows in {B}anach space}, Memoirs of the American
  Mathematical Society \textbf{135} (1998), no.~645, viii+129.

\bibitem[Can75]{Cantor1975}
M.~Cantor, \emph{Perfect fluid flows over {${\bf R}^{n}$} with asymptotic
  conditions}, Journal of Functional Analysis \textbf{18} (1975), 73--84.

\bibitem[CMR01]{CeMaRa2001}
H.~Cendra, J.~E. Marsden, and T.~S. Ratiu, \emph{Lagrangian reduction by
  stages}, Memoirs of the American Mathematical Society, vol. 152, American
  Mathematical Society, 2001.

\bibitem[Del80]{Delcomyn1980}
F.~Delcomyn, \emph{Neural basis of rhythmic behavior in animals}, Science
  \textbf{210} (1980), no.~4469, 492--498.

\bibitem[EK11]{Ehlers2011}
K.~M. Ehlers and J.~Koiller, \emph{Micro-swimming without flagella: propulsion
  by internal structures}, Regular and Chaotic Dynamics \textbf{16} (2011),
  no.~6, 623--652.

\bibitem[Eld13]{Eldering2013}
J.~Eldering, \emph{Normally hyperbolic invariant manifolds: The noncompact
  case}, Atlantis Series in Dynamical Systems, Atlantis Press, August 2013.

\bibitem[EM70]{EbinMarsden1970}
D.~G. Ebin and J.~E. Marsden, \emph{Groups of diffeomorphisms and the motion of
  an incompressible fluid}, The Annals of Mathematics \textbf{92} (1970),
  102--163.

\bibitem[Fen71]{Fenichel1971}
N.~Fenichel, \emph{Persistence and smoothness of invariant manifolds for
  flows}, Indiana University Mathematics Journal \textbf{21} (1971), 193--226.

\bibitem[Fri94]{OttoFriesen1994}
W.~O. Friesen, \emph{Reciprocal inhibition: A mechanism underlying oscillatory
  animal movements}, Neuroscience \& Biobehavioral Reviews \textbf{18} (1994),
  no.~4, 547 -- 553.

\bibitem[GGG13]{Gadelha2013}
H.~Gadelha, E.~A. Gaffney, and A.~Goriely, \emph{The counterbend phenomena in
  flagellar axonemes and cross-linked filament bundles}, Proceedings of the
  National Academy of Sciences \textbf{110} (2013), no.~30, 12180--12185.

\bibitem[GM77]{Monaghan1977}
R.~A. Gingold and J.~J. Monaghan, \emph{{Smoothed particle hydrodynamics :
  Theory and application to non-spherical stars}}, Monthly Notices of the Royal
  Astronomical Society \textbf{181} (1977), 375--389.

\bibitem[GW85]{GillnerWallen1985}
S.~Grillner and P.~Wallen, \emph{Central pattern generators for locomotion,
  with special reference to vertebrates}, Annual Review of Neuroscience
  \textbf{8} (1985), 233--261.

\bibitem[HPS77]{Hirsch77}
M.~W. Hirsch, C.~C. Pugh, and M.~Shub, \emph{Invariant manifolds}, Lecture
  Notes in Mathematics, vol. 583, Springer-Verlag, 1977.

\bibitem[HW07]{Hobbelen}
D.~G.~E. Hobbelen and M.~Wisse, \emph{Limit-cycle walking}, Humanoid Robots:
  Human-like Machines, Itech, Vienna, June 2007, pp.~277--294.

\bibitem[JE13]{JaEl2013}
H.~O. Jacobs and J.~Eldering, \emph{Limit cycle walking on a regularized
  ground}, preprint, arXiv:1212.1978[math.DS], March 2013.

\bibitem[JS99]{Jones1999}
D.~A. Jones and S~Shkoller, \emph{Persistence of invariant manifolds for
  nonlinear {PDE}s}, Studies in Applied Mathematics \textbf{102} (1999), no.~1,
  27--67.

\bibitem[JV13]{JaVa2013}
H.~Jacobs and J.~Vankerschaver, \emph{Fluid-structure interaction in the
  {Lagrange-Poincar\'e} formalism: the {Navier-Stokes} and inviscid regimes},
  preprint, arXiv:1212.1144[math.DS], August 2013.

\bibitem[KEM96]{Koiller1996}
J.~Koiller, K.~Ehlers, and R.~Montgomery, \emph{Problems and progress in
  microswimming}, Journal of Nonlinear Science \textbf{6} (1996), no.~6,
  507--541.

\bibitem[KM96]{Kelly1996}
S.~D. Kelly and R.~M. Murray, \emph{The geometry and control of dissipative
  systems}, Proceedings of the 35th Conference on Decisions and Control,
  December 1996.

\bibitem[KM00]{Kelly2000}
\bysame, \emph{{Modelling efficient pisciform swimming for control}},
  {International Journal of Robust and Nonlinear Control} \textbf{{10}}
  ({2000}), no.~{4}, {217--241}.

\bibitem[KMRMH05]{Kanso2005}
E.~Kanso, J.~E. Marsden, C.~W. Rowley, and J.~B. Melli-Huber, \emph{{Locomotion
  of articulated bodies in a perfect fluid}}, {Journal of Nonlinear Science}
  \textbf{{15}} ({2005}), no.~{4}, {255--289}.

\bibitem[LBLT03a]{Liao2003a}
J.~C. Liao, D.~N. Beal, G.~V. Lauder, and M.~S. Triantafyllou, \emph{Fish
  exploiting vorticies decrease muscle activity}, Science \textbf{302} (2003),
  1566--1569.

\bibitem[LBLT03b]{Liao2003b}
\bysame, \emph{{The Karman gait: novel body kinematics of rainbow trout
  swimming in a vortex street}}, {Journal of Experimental Biology}
  \textbf{{206}} ({2003}), no.~{6}, {1059--1073}.

\bibitem[LMMR86]{Lewis1986}
D.~Lewis, J.~Marsden, R.~Montgomery, and T.~Ratiu, \emph{The {H}amiltonian
  structure for dynamic free boundary problems}, Physica D. Nonlinear Phenomena
  \textbf{18} (1986), no.~1-3, 391--404.

\bibitem[LRW{\etalchar{+}}12]{LiuRistroph2012}
B.~Liu, L.~Ristroph, A.~Weathers, S.~Childress, and J.~Zhang, \emph{Intrinsic
  stability of a body hovering in an oscillating airflow}, Physical Review
  Letters \textbf{108} (2012), 068103.

\bibitem[Luc77]{Lucy1977}
B.~L. Lucy, \emph{A numerical approach to testing the fission hypothesis},
  Astronomical Journal \textbf{82} (1977), 1013--1924.

\bibitem[McG90]{McGreer1990}
T.~McGreer, \emph{Passive dynamic walking}, The International Journal of
  Robotics \textbf{9} (1990), no.~2, 62--82.

\bibitem[MCRC98]{GaChaRuCo98}
G.~Mariano, A.~Chatterjee, A.~Ruina, and M.~Coleman, \emph{The simplest walking
  model: Stability, complexity, and scaling}, Journal of Biomechanical
  Engineering \textbf{120} (1998), no.~2, 281--288.

\bibitem[MH83]{MFOE}
J.~E. Marsden and T.~J.~R. Hughes, \emph{Mathematical foundations of
  elasticity}, Dover, 1983.

\bibitem[MMR90]{MaMoRa1990}
J.~E. Marsden, R.~Montgomery, and T.~S. Ratiu, \emph{Reduction, symmetry, and
  phases in mechanics}, vol.~88, Memoirs of the American Mathematical Society,
  no. 436, American Mathematical Society, 1990.

\bibitem[MR99]{MandS}
J.~E. Marsden and T.~S. Ratiu, \emph{Introduction to mechanics and symmetry},
  2nd ed., Texts in Applied Mathematics, vol.~17, Springer Verlag, 1999.

\bibitem[Mun11]{Munnier2011}
A.~Munnier, \emph{Passive and self-propelled locomotion for an elastic swimmer
  in a perfect fluid}, SIAM Journal on Applied Dynamical Systems \textbf{10}
  (2011), no.~4, 1363--1403.

\bibitem[Pes02]{Peskin2002}
C.~Peskin, \emph{The immersed boundary method}, Acta Numerica (2002), 479--513.

\bibitem[Sha98]{Shadwick1998}
R.~E. Shadwick, \emph{Muscle dynamics in fish during steady swimming}, Amer.
  Zool. \textbf{38} (1998), 755--70.

\bibitem[SVZ05]{Shelley2005}
M.~J. Shelley, N.~Vandenberghe, and J.~Zhang, \emph{Heavy flags undergo
  spontaneous oscillations in flowing water}, Physical Review Letters
  \textbf{94} (2005), 094302.

\bibitem[SW89]{ShapereWilczek1989}
A.~Shapere and F.~Wilczek, \emph{{Geometry of self-propulsion at low Reynolds
  number}}, Journal of Fluid Mechanics \textbf{198} (1989), 557--585.

\bibitem[THW{\etalchar{+}}10]{Tytell2010}
E.~D. Tytell, C.~Hsu, T.~L. Williams, A.~H. Cohen, and L.~J. Fauci,
  \emph{Interactions between internal forces, body stiffness, and fluid
  environment in a neuromechanical model of lamprey swimming}, Proceedings of
  the National Academy of Sciences \textbf{107} (2010), no.~46, 19832--19837.

\bibitem[Tro09]{Troyanov2009}
M.~Troyanov, \emph{On the {H}odge decomposition in {$\Bbb R^n$}}, Mosc. Math.
  J. \textbf{9} (2009), no.~4, 899--926, 936.

\bibitem[Vog03]{Vogel}
S.~Vogel, \emph{Comparative biomechanics}, Princeton University Press, 2003.

\bibitem[WE11]{Wilson2011}
M.~W. Wilson and J.~D. Eldredge, \emph{Performance improvement through passive
  mechanics in jellyfish-inspired swimming}, International Journal of
  Non-Linear Mechanics \textbf{46} (2011), no.~4, 557 -- 567.

\bibitem[WW84]{WallenWilliams1984}
P.~Wallen and T.~L. Williams, \emph{Fictive locomotion in the lamprey spinal
  chord in vitro compared with swimming in the intact and spinal animal},
  Journal of Physiology \textbf{347} (1984), 225--239.

\bibitem[ZD97]{ZhouDoyle}
K.~Zhou and J.~C. Doyle, \emph{Essentials of robust control}, 1st ed., Prentice
  Hall, 1997.

\bibitem[ZLL10]{Zhang2010}
J.~Zhang, N.~Liu, and X.~Lu, \emph{Locomotion of a passively flapping flat
  plate}, Journal of Fluid Mechanics \textbf{659} (2010), 43--68.

\end{thebibliography}

\end{document}